\font\sc=rsfs10 at 12pt
\numberwithin{equation}{section}
\renewcommand{\a}{\alpha}
\renewcommand{\b}{\beta}
\newcommand{\g}{\gamma}
\renewcommand{\d}{\delta}
\newcommand{\D}{\Delta}
\newcommand{\e}{\epsilon}
\newcommand{\z}{\zeta}
\renewcommand{\k}{\kappa}
\renewcommand{\l}{\lambda}
\renewcommand{\L}{\Lambda}
\newcommand{\m}{\mu}
\newcommand{\x}{\xi}
\renewcommand{\r}{\rho}
\newcommand{\s}{\sigma}
\newcommand{\Ss}{\Sigma}
\newcommand{\f}{\phi}
\newcommand{\ff}{\varphi}
\newcommand{\F}{\Phi}
\newcommand{\p}{\psi}
\renewcommand{\P}{\Psi}
\renewcommand{\o}{\omega}
\renewcommand{\O}{\Omega}
\newcommand{\vs}{\varsigma}
\newcommand{\C}{{\mathbb C}}
\newcommand{\R}{{\mathbb R}}
\newcommand{\Z}{{\mathbb Z}}
\newcommand{\K}{{\mathbb K}}
\newcommand{\ab}{{\mathbf a}}
\newcommand{\eb}{{\mathbf e}}
\newcommand{\kb}{{\mathbf k}}
\newcommand{\lb}{{\mathbf l}}
\newcommand{\mb}{{\mathbf m}}
\newcommand{\nb}{{\mathbf n}}
\newcommand{\pb}{{\mathbf p}}
\newcommand{\qb}{{\mathbf q}}
\newcommand{\rb}{{\mathbf r}}
\newcommand{\wb}{{\mathbf w}}
\newcommand{\xb}{{\mathbf x}}
\newcommand{\zb}{{\mathbf z}}
\newcommand{\Kb}{{\mathbf K}}
\newcommand{\Pb}{{\mathbf P}}
\newcommand{\Tb}{{\mathbf T}}
\newcommand{\SF}{\mathfrak S}
\newcommand{\Ac}{{\mathcal A}}
\newcommand{\Bc}{{\mathcal B}}
\newcommand{\Cc}{{\mathcal C}}
\newcommand{\Dc}{{\mathcal D}}
\newcommand{\Ec}{{\mathcal E}}
\newcommand{\Fc}{{\mathcal F}}
\newcommand{\Hc}{{\mathcal H}}
\newcommand{\Jc}{{\mathcal J}}
\newcommand{\Lc}{{\mathcal L}}
\newcommand{\Pc}{{\mathcal P}}
\newcommand{\Rc}{{\mathcal R}}
\newcommand{\Sc}{{\mathcal S}}
\newcommand{\Tc}{{\mathcal T}}
\newcommand{\Zc}{{\mathcal Z}}
\newcommand{\Ls}{\sc\mbox{L}\hspace{1.0pt}}
\newcommand{\Ms}{\sc\mbox{M}\hspace{1.0pt}}
\newcommand{\As}{\sc\mbox{A}\hspace{1.0pt}}
\newcommand{\Hs}{\sc\mbox{H}\hspace{1.0pt}}
\newcommand{\Zs}{\sc\mbox{Z}\hspace{1.0pt}}
\newcommand{\Fs}{\sc\mbox{F}\hspace{1.0pt}}
\newcommand{\Ns}{\sc\mbox{N}\hspace{1.0pt}}
\newcommand{\Span}{{\rm Span}\,}
\newcommand{\supp}{\hbox{{\rm supp}}\,}
\newcommand{\Det}{{\rm Det}\,}
\newcommand{\lp}{\left(}
\newcommand{\rp}{\right)}
\newcommand{\blp}{\bigl(}
\newcommand{\brp}{\bigr)}
\newcommand{\card}{\operatorname{card}}
\DeclareMathOperator{\im}{{\rm Im}\,} \DeclareMathOperator{\re}{{\rm Re}\,}
\DeclareMathOperator{\rank}{rank}
\newcommand{\Ker}{\hbox{{\rm Ker}}\,}
\newcommand{\psupp}{\operatorname{psupp}}
\newcommand{\singsupp}{\operatorname{sing supp}}
\newtheorem{theorem}{Theorem}[section]
\newtheorem{proposition}[theorem]{Proposition}
\newtheorem{lemma}[theorem]{Lemma}
\theoremstyle{definition}
\newtheorem{definition}[theorem]{Definition}
\theoremstyle{remark}
\newtheorem{remark}[theorem]{Remark}
\newtheorem{example}[theorem]{Example}
\begin{document}

\title[Finite rank Toeplitz operators]{Finite rank Toeplitz operators in  Bergman spaces}

\author[G. Rozenblum]{Grigori Rozenblum}
\address{1) Department of Mathematics
                        Chalmers University of Technology,  2) Department of Mathematics
                         University of Gothenburg }
\email{grigori@math.chalmers.se}

\begin{abstract}
We discuss resent developments in the problem of description of finite rank Toeplitz operators in different Bergman spaces and give some applications
\end{abstract}
\keywords{Bergman spaces,
Toeplitz operators}
\date{}

\maketitle

\section{Introduction}
\label{Section.intro}

Toeplitz operators arise in many fields of Analysis and have been an object of active study for many years. Quite a lot of questions can be asked about these operators, and these questions depend on the field where Toeplitz operators are applied.

The classical Toeplitz operator $T_f$  in the Hardy space $H^2(S^1)$ is defined as
\begin{equation}\label{1.T}
    T_f u= Pfu,
\end{equation}
for $u\in H^2(S^1)$, where $f$ is a bounded function on $S^1$ (the weight function) and $P$ is the Riesz projection, the orthogonal projection $P: L_2(S^1)\to H^2(S^1)$. Such operators are often called Riesz-Toeplitz or Hardy-Toeplitz operators (see, \cite{MartRosen}, for more detail). More generally, for a Hilbert space $\Hc$ of functions and a closed subspace $\Lc\subset\Hc$, the  Toeplitz operator  $T_f$ in $\Lc$ acts as in \eqref{1.T}, where $P$ is the projection $P:\Hc\to \Lc$. In particular, in the case when $\Hc$ is the space $L_2(\Omega,\r)$ for some domain $\O\subset \C^d$ and some measure $\r$ and $\Lc$ is the Bergman space  $\Bc^2=\Bc^2(\O,\r)$ of analytical functions in $\Hc$, such operator is called Bergman-Toeplitz; we will denote it by $\Tc_f$.

Among many interesting properties of Riesz-Toeplitz operators,  we mention the following \emph{cut-off} one. If ${f}$ is a bounded function and the operator $T_f$ is compact then $f$ should be zero. For many other classes of operators a similar cut-off on some level is also observed.  The natural question arises, whether there is a kind of cut-off property for Bergman-Toeplitz operators. Quite long ago it became a common knowledge that at least direct analogy does not take place. In the paper \cite{Lue}, the conditions were found on the function $f$ in the unit disk $\O=D$ guaranteeing that the operator $\Tc_f$ in $\Bc^2(D,\l)$ with Lebesgue measure $\l$ belongs to the Schatten class $\SF_p$.  So, the natural question came up: probably, it is on the finite rank level that the cut-off takes place. In other words, if a Bergman-Toeplitz operator has finite rank it should  be zero.

It was known long ago that the Schatten class behavior of $\Tc_f$ is determined by the rate of convergence to zero at the boundary of the function $f$. Therefore the finite rank (FR) hypothesis deals with functions $f$ with compact support not touching the boundary of $\O$. In this setting the FR hypothesis is equivalent to the one for Toeplitz operators on the Bargmann (Fock, Segal) space consisting of analytical functions in $\C$, square summable with a Gaussian weight.
A proof of the FR hypothesis appeared in the same paper \cite{Lue}, about twenty lines long. Unfortunately, there was an unrepairable fault in the proof, so the FR remained unsettled.

It was only in 2007 that the proof of the FR hypothesis was finally found, even in a more general form. The Bergman projection $\Pb:L_2\to\Bc$ can be extended to an operator from the space of distributions $\Dc'(\O)$ to $\Bc^2(\O,\l)$. Let $\m$ be a regular complex Borel measure with compact support in $\O$. With $\m$ we associate the Toeplitz operator    $\Tc_\m:\ u\mapsto \Pb u\mu$ in $\Bc^2(\O,\l)$.

 In the paper \cite{Lue2} the following result was established.
\begin{theorem}\label{1.Lue}Suppose that the Toeplitz operator $\Tc_\m$ in $\Bc^2(\O,\l)$, $\O\subset \C$ has finite rank $\rb$. Then the measure $\m$ is the sum of $\rb$ point masses,
\begin{equation}\label{1.mu}
    \m=\sum_1^\rb C_k \d_{z_j}, \ z_j\in \O.
\end{equation}
\end{theorem}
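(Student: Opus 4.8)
The plan is to translate the finite-rank condition into the vanishing of all large minors of the moment matrix of $\m$, then to collapse those vanishing minors — via a classical determinant identity — into the statement that an explicit \emph{positive} measure on a product space vanishes, from which the support bound is immediate. First I would record the standard formula $\langle\Tc_\m u,v\rangle_{\Bc^2}=\int_\O u\,\overline v\,d\m$ for $u,v$ in the Bergman space, so that $\rank\Tc_\m$ equals the rank of the sesquilinear form $(u,v)\mapsto\int_\O u\overline v\,d\m$. Taking $\O$ bounded (to which the general case reduces; alternatively one replaces the monomials below by a rich enough family such as $(z-a)^{-k}$ with $a\notin\overline\O$, $k\ge2$), the monomials $z^n$ lie in $\Bc^2(\O,\l)$, and restricting the form to polynomials cannot raise its rank; hence the infinite moment matrix $M=\big(\int_\O z^j\overline z^k\,d\m\big)_{j,k\ge0}$ has rank at most $\rb$, i.e.\ every $(\rb+1)\times(\rb+1)$ minor of $M$ vanishes.

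Set $N=\rb+1$. By the Andr\'eief identity, for all strictly increasing tuples $p_1<\dots<p_N$ and $q_1<\dots<q_N$ of nonnegative integers,
\[
0=\det\Big(\int_\O z^{p_i}\overline z^{q_j}\,d\m\Big)_{i,j=1}^N
=\frac1{N!}\int_{\O^N}\det\big(z_k^{p_i}\big)_{i,k}\;\overline{\det\big(z_k^{q_j}\big)_{j,k}}\;d\m(z_1)\cdots d\m(z_N).
\]
The Slater-type determinants $\det(z_k^{p_i})_{i,k}$ span the space of antisymmetric polynomials in $z_1,\dots,z_N$, and every antisymmetric polynomial is the Vandermonde $\D(z)=\prod_{i<j}(z_i-z_j)$ times a symmetric polynomial. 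Hence the identities above are equivalent to
\[
\int_{\O^N}s(z)\,\overline{t(z)}\,|\D(z)|^2\,d\m(z_1)\cdots d\m(z_N)=0\qquad\text{for all symmetric polynomials }s,t .
\]

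Now consider the complex measure $\rho:=|\D|^2\,d\m^{\otimes N}$ on $\O^N$: it is supported in the compact set $K^N$, where $K=\supp\m$, it is invariant under the symmetric group $S_N$, and it annihilates $s\overline t$ for all symmetric polynomials $s,t$. By the fundamental theorem on symmetric functions the products $s\overline t$ span, in the uniform norm, all $S_N$-invariant continuous functions on $K^N$ — this is Stone--Weierstrass on the compact Hausdorff quotient $K^N/S_N$, the elementary symmetric functions separating $S_N$-orbits. Averaging an arbitrary $g\in C(K^N)$ over $S_N$ and using the invariance of $\rho$ then shows $\rho=0$, so the positive measure $|\rho|=|\D|^2\,d|\m|^{\otimes N}$ vanishes as well. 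If $\supp\m$ contained $N$ distinct points $w_1,\dots,w_N$, choosing disjoint neighbourhoods $U_i\ni w_i$ one would have $|\D|^2\ge c>0$ on $U_1\times\dots\times U_N$ and hence $\big(|\D|^2\,d|\m|^{\otimes N}\big)(U_1\times\dots\times U_N)\ge c\prod_i|\m|(U_i)>0$, a contradiction. Thus $\supp\m$ has at most $N-1=\rb$ points, and a compactly supported regular complex Borel measure with finite support is exactly a finite sum $\sum_1^\rb C_k\d_{z_k}$, which is \eqref{1.mu}.

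The step I expect to be the main obstacle — and the real content of the argument — is the second one: recognizing that the Andr\'eief expansion converts ``every $(\rb+1)$-minor of the moment matrix vanishes'' into the vanishing of the genuine positive measure $|\D|^2\,d|\m|^{\otimes N}$; once that is available, the remaining topology (Stone--Weierstrass plus the support argument) is routine. One should also be a little careful with the reduction to bounded $\O$, or equivalently with the choice of a sufficiently large family of test functions in $\Bc^2(\O,\l)$ replacing the monomials, which is what legitimizes passing from $\Tc_\m$ to its moment matrix.
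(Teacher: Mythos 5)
Your argument is correct and is essentially the paper's own rendering of Luecking's proof (Theorem \ref{3.Thm.Luecking}): reduce to the moment matrix, expand its $(\rb+1)$-minors over the product space, factor the Vandermonde out of antisymmetric polynomials, and apply Stone--Weierstrass on the symmetric quotient followed by symmetrization; your invocation of the Andr\'eief identity is a compact one-line repackaging of the column-by-column multilinearity argument in Lemma \ref{3.LueLem1}, and the remark on reducing to bounded $\O$ corresponds to the observation after Proposition \ref{2.Prop.Matrix}. The one place you go further than the proof actually written out in Section \ref{SectionLuecking} is the endgame: that proof deliberately restricts to absolutely continuous $\m$ (so that the vanishing of $|V|^2\,d\m^{\otimes N}$ gives $\m=0$ via a Lebesgue-null-set argument) and defers general measures to the distributional Theorem \ref{3.Th.Alexroz}, whereas your passage to the positive measure $|V|^2\,d|\m|^{\otimes N}$ together with the disjoint-neighbourhoods contradiction yields directly the bound $\card\supp\m\le\rb$, which is exactly what Theorem \ref{1.Lue} asserts and is the form of the argument in \cite{Lue2}.
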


 The publication of the proof of Theorem \ref{1.Lue} induced an activity around it.  In two  years to follow several papers appeared, where the FR theorem was generalized in different directions, and interesting applications were found in Analysis and Mathematical Physics.

 In this paper we aim for  collecting and systematizing the existing results on the finite rank problem and their applications. We also present several new theorems generalizing and extending  these results.

 \section{Problem setting}\label{setting}

 Let $\Omega$
 be a domain in $\R^d$ or $\C^d$. We suppose that a measure $\r$ is defined on $\O$, jointly absolutely continuous with  Lebesgue measure. Suppose that $\Lc$ is a closed subspace in $\Hc=L_2(\Omega, \r)$, consisting of smooth functions, $\Lc\subset C^{\infty}(\Omega)$. In this case the orthogonal projection $\Pb:\Lc\to \Hc$ is an integral operator with smooth kernel,
 \begin{equation}\label{2.Bergman}
    \Pb u(x)=\int P(x,y)u(y)d\r(x).
 \end{equation}
 We will call $\Pb$ the Bergman projection and $P(x,y)$ the Bergman kernel (corresponding to the subspace $\Lc$).

 Let $F$ be a distribution, compactly supported in $\O,$ $F\in \Ec'(\O)$. We will denote by $\langle F,\f\rangle$ the action of the distribution $F$ on the function $\f\in \Ec$. Then one can define the Toeplitz operator in $\Lc$ with  \emph{weight} $F$:

 \begin{equation}\label{2.ToeplitsDef1}
    (\Tc_F u)(x) =\langle F,P(x,\cdot)u(\cdot)\rangle.
 \end{equation}
 The formula \eqref{2.ToeplitsDef1} can be also understood in the following way. The operator $\Pb$ considered as  an operator $P:\Hc\to \Lc$ has an adjoint, $P':\Lc'\to\Hc$, so   $PP'$ is the extension of $
 \Pb$ to the operator $\Lc'\to\Lc$, in particular, $\Pb$ extends as an operator from $\Ec'(\O)$  to $ \Lc$. In this setting, $Fu \in \Ec'(\O)$ for $u\in \Ec(\O)$ and the Toeplitz operator has the form
 \begin{equation}\label{2.ToeplitzDef2}
    \Tc_F u = \Pb Fu,
 \end{equation}
 consistently with the traditional definition of Toeplitz operators.

 It is more convenient to use the description of the Toeplitz operator by means of the sesquilinear form. For $u, v\in \Lc$, we have
 \begin{equation}\label{2.ObtainingQF}
    (\Tc_F u, v)=(\Pb  Fu, v)=\langle \s Fu,\overline{\Pb v}\rangle=\langle \s F,  u\bar{v} \rangle,
 \end{equation}
 where $\s$ is the Radon-Nicodim derivative of $\r$ with respect to the Lebesgue measure.
 In particular, if $F$ is a regular Borel complex measure $F=\m$,
 the corresponding Toeplitz operator acts as
 \begin{equation}\label{2.ToeplMeasure}
    \Tc_\m u(x)=\int_{\O} P(x,y)u(x)d\mu(x),
 \end{equation}
 and the quadratic form is
 \begin{equation*}
    (\Tc_F u, v)=\int_{\O} u\bar{v}\s d\m(x).
 \end{equation*}
 Finally, when $F$ is a bounded function,
 the formula \eqref{2.ObtainingQF} takes the form
 \begin{equation}\label{2.QF}
    (\Tc_Fu,v)=\int_{\O} u\bar{v} F(x) d\r(x).
 \end{equation}

 Classical examples of Bergman spaces and corresponding Toeplitz operators are produced by solutions of elliptic equations and systems.
 \begin{example}\label{2.E1} Let $\O$ be a bounded domain in $\C$, $\r=\l$ be the Lebesgue measure, $\Lc=\Bc^2(\O)$ be the space of $L_2$ functions analytical in $\O$. This is the classical Bergman space.\end{example}
 \begin{example}\label{2.E2} Let $\O$ be a bounded pseudoconvex domain in $\C^d$, $d>1$, with Lebesgue measure $\r$ and let the space $\Lc$ consist of $L_2$ functions analytical in $\O$. This is also a classical Bergman space. Here, and in Example \ref{2.E1}, measures different from the Lebesgue one are also considered, especially when $\O$ is a ball or a (poly)disk.\end{example}
 \begin{example}\label{2.E3} For a bounded domain $\O\subset R^d$, we set $\Lc$ to be the space of $L_2$ solutions of the  equation $Lu=0$, where $L$ is an elliptic differential  operator with constant coefficients. In particular, if $L$ is the Laplacian, the space $\Lc$ is called the harmonic Bergman space.\end{example}
 \begin{example}\label{2.E4} If $\O$ is a bounded domain in $\R^d$ with \emph{even} $d=2\mb$, and $\R^d$ is identified with $\C^\mb$ with variables $z_j=(x_j,y_j), j=1,\dots,\mb$, the Bergman space of functions which are harmonic with respect to each pair $(x_j,y_j)$ is called $\mb$-harmonic Bergman space; if on the other hand, the space of functions $u(z)$ such that $u_\z(\x_1,\x_2)=u(\z(\x_1+i\x_2))$ is harmonic as a function of variables $\x_1,\x_2$ for any $\z\in \C^m\setminus\{0\}$,  is called pluriharmonic Bergman space.\end{example}
 \begin{example}\label{2.E5}Let $\O$ be the whole of $\C^\mb=\R^d$, with the Gaussian measure $d\r=\exp(-|z|^2/2)d\l$. The subspace $\Lc\subset L_2(\C^\mb,\rho) $ of entire analytical functions in $\C^\mb$ is called \emph{Fock} or \emph{Segal-Bargmann} space.
 \end{example}

 The study of Toeplitz operators in many cases is based upon the consideration of associated infinite matrices.

 Let $\Fs_1=\{f_j(x), x\in \O\}$, $\Fs_2=\{g_j(x), x\in \O\}$ be two infinite systems of functions in $\Lc$. With these systems and a distribution $F\in\Ec'(\O)$ we associate the matrix
 \begin{equation}\label{2:matrix}
    \Ac=\Ac(F)=\Ac(F,\Fs_1,\Fs_2,\O,\rho)=(\Tc_F f_j,g_k)_{j,k=1,\dots}=(\langle \s F,  f_j\bar{g_k}\rangle).
 \end{equation}
 So, the matrix $\Ac$ is the matrix of the sesquilinear form of the operator $\Tc_F$ on the systems $\Fs_1, \Fs_2$. We formulate the obvious but important statement.
 \begin{proposition}\label{2.Prop.Matrix} Suppose that the Toeplitz operator $\Tc_F$ has finite rank $\rb$. Then the matrix $\Ac$ also has finite rank, moreover $\rank(\Ac)\le \rb.$\end{proposition}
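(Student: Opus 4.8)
The plan is to reduce this to elementary linear algebra; the only preliminary is to fix the meaning of the rank of a (possibly infinite) matrix. I would take $\rank(\Ac)$ to be the dimension of the linear span of the rows of $\Ac$ inside the vector space of all scalar sequences, which equals the supremum of the ranks of the finite square truncations of $\Ac$ and, by a symmetric argument, the dimension of the span of its columns; the coincidence of these notions is standard and needs nothing new.

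First I would invoke the hypothesis directly. Since $\Tc_F$ has finite rank $\rb$, the range $\Ran\Tc_F\subset\Lc$ is a subspace of dimension $\rb$, and I fix a basis $h_1,\dots,h_\rb$ of it. Then, for every index $j$, I expand $\Tc_F f_j=\sum_{l=1}^{\rb}\lambda_j^{(l)}h_l$ with uniquely determined scalars $\lambda_j^{(l)}$. Substituting this expansion into the definition \eqref{2:matrix} of the matrix entries gives, for all $j$ and $k$,
\begin{equation*}
\Ac_{jk}=(\Tc_F f_j,g_k)=\sum_{l=1}^{\rb}\lambda_j^{(l)}(h_l,g_k).
\end{equation*}
Introducing the $\rb$ fixed sequences $r^{(l)}=\bigl((h_l,g_k)\bigr)_{k=1,2,\dots}$, $l=1,\dots,\rb$, this identity says exactly that the $j$-th row of $\Ac$ is the linear combination $\sum_{l=1}^{\rb}\lambda_j^{(l)}r^{(l)}$. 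Consequently every row of $\Ac$ lies in $\Span\{r^{(1)},\dots,r^{(\rb)}\}$, a space of dimension at most $\rb$; hence $\rank(\Ac)\le\rb$, and in particular every finite square truncation of $\Ac$ has rank at most $\rb$.

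I do not expect a genuine obstacle here: this is the ``obvious'' half of the displayed claim, and the argument is purely algebraic. The one point worth stating explicitly is that the displayed decomposition of $\Ac_{jk}$ involves no interchange of limits --- each entry is a finite sum of scalars --- so no convergence or boundedness hypothesis on $\Tc_F$ beyond what already makes the forms $(\Tc_F f_j,g_k)$ meaningful is needed, and the systems $\Fs_1,\Fs_2$ may be completely arbitrary. If one prefers to argue with columns, the symmetric variant applies: $\Ran\Tc_F^{*}$ is also $\rb$-dimensional, expanding each $g_k$ against a basis of it exhibits every column of $\Ac$ as a combination of $\rb$ fixed sequences, and the same bound $\rank(\Ac)\le\rb$ follows.
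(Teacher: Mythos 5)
Your argument is correct and is precisely the elementary linear-algebra reasoning the paper has in mind: the paper simply labels this "the obvious but important statement" and gives no proof, so your expansion of $\Tc_F f_j$ in a basis of the finite-dimensional range, which exhibits every row of $\Ac$ as a combination of $\rb$ fixed sequences, is exactly what is being tacitly invoked.
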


 The use of matrices of the form \eqref{2:matrix} enables one to perform important reductions. In particular, since the domain $\O$ does not enter explicitly into the matrix, the rank of this  matrix does not depend on the domain $\O$, as long as one can chose the systems $\Fs_1, \Fs_2$ dense simultaneously in the  Bergman spaces  in different domains.      Thus,   in  particular, the FR  problems      for   the analytical  Bergman spaces in bounded domains and for the Fock  space   are    equivalent  (see    the discussion in  \cite{RShir}.)

 \section{Theorem of D. Luecking. Extensions in dimension 1}\label{SectionLuecking}
 In this section we present the original proof given by  D. Luecking in \cite{Lue2}, and give  extensions in several  directions.
 \begin{theorem}\label{3.Thm.Luecking}
 Let $\O\subset\C$ be a bounded domain, with Lebesgue measure. Suppose that for some regular complex Borel measure $\m$, absolutely continuous with respect to the Lebesgue measure, with compact support in $\O$, the Toeplitz operator $\Tc_\m$ in the Bergman space of analytical functions has finite rank $\rb$. Then $\mu=0$.
 \end{theorem}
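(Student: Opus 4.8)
The plan is to reduce everything to a statement about analytic polynomials and then exploit that a finite-rank bilinear form forces a determinant identity. Since the domain does not enter the matrix \eqref{2:matrix}, by the reduction discussed in Section \ref{setting} I may replace the analytic Bergman space on $\O$ by the Fock space (or simply work with the dense system of monomials $z^j$, $j=0,1,2,\dots$, which are dense in the Bergman space of any bounded domain after rescaling). Fix the systems $\Fs_1=\Fs_2=\{z^j\}_{j\ge 0}$ and form the infinite matrix $\Ac=\Ac(\m)$ with entries
\begin{equation*}
    \Ac_{jk}=\langle \m,\, z^j\bar z^{k}\rangle=\int_\O z^j\bar z^{k}\,d\m(z).
\end{equation*}
By Proposition \ref{2.Prop.Matrix}, $\rank(\Ac)\le\rb$, so \emph{every} $(\rb+1)\times(\rb+1)$ minor of $\Ac$ vanishes.

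The heart of the argument is to convert the vanishing of these minors into a pointwise statement about the density $g=d\m/d\l\in L^1_{\mathrm{comp}}$. For indices $j_0<\cdots<j_\rb$ and $k_0<\cdots<k_\rb$, the corresponding minor is
\begin{equation*}
    \det\Big(\int_\O z^{j_a}\bar z^{k_b}\,g(z)\,d\l(z)\Big)_{a,b=0}^{\rb}
    =\int_{\O^{\rb+1}}\Big(\prod_{a} z_a^{j_a}\Big)\det\big(\bar z_a^{\,k_b}\big)_{a,b}\,\prod_a g(z_a)\,d\l(z_a),
\end{equation*}
and summing against suitable coefficients (to build Vandermonde-type generating functions) one sees that, after antisymmetrizing in $z_0,\dots,z_\rb$, the requirement ``all $(\rb+1)$-minors vanish'' is equivalent to
\begin{equation*}
    \int_{\O^{\rb+1}} h(z_0,\dots,z_\rb)\,\overline{\Delta(z_0,\dots,z_\rb)}\,\prod_{a=0}^{\rb} g(z_a)\,d\l(z_a)=0
\end{equation*}
for every analytic polynomial $h$ in $\rb+1$ variables, where $\Delta=\prod_{a<b}(z_a-z_b)$ is the Vandermonde determinant. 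The key point is that the antisymmetric ``analytic'' functions are exactly the multiples $h\cdot\Delta$ with $h$ symmetric analytic; so the identity says that the antisymmetrization of the function $\overline{\Delta(z)}\prod g(z_a)$ is orthogonal to all analytic polynomials on $\O^{\rb+1}$, hence (by a $\bar\partial$-argument, or by testing against $\overline{\partial}$-closed currents) this antisymmetrization vanishes as a distribution. Since $\Delta$ is not identically zero, one concludes that the product $g(z_0)\cdots g(z_\rb)$ is supported on the diagonal set where some $z_a=z_b$, i.e. $\prod_a g(z_a)=0$ a.e. on $\O^{\rb+1}$ off a measure-zero set — which, for an $L^1$ function, forces $g$ to be supported on an at most $\rb$-point set, hence $g=0$ a.e. (an honest function cannot be a combination of Diracs), i.e. $\m=0$. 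This last measure-theoretic deduction uses crucially the absolute continuity hypothesis, which is exactly why the conclusion here is $\m=0$ rather than the point-mass conclusion of Theorem \ref{1.Lue}.

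The main obstacle is the step that turns the algebraic vanishing of all minors into the analytic/distributional orthogonality statement and then rigorously passes from ``the antisymmetrized density is orthogonal to analytic polynomials'' to ``the antisymmetrized density vanishes.'' Orthogonality to \emph{polynomials} is weaker than orthogonality to all of $L^2$, so one must either invoke completeness of analytic polynomials in the relevant Bergman-type space on $\O^{\rb+1}$ (legitimate for, say, a product of discs after shrinking the support of $\m$, which costs nothing) or argue directly that a compactly supported distribution annihilating all $z^\alpha$ is $\bar\partial$ of something and then use the explicit antisymmetric structure. Handling the combinatorics of extracting the clean Vandermonde identity from arbitrary minors — and checking that no nontrivial antisymmetric analytic polynomial is missed — is the delicate bookkeeping; everything after ``$\prod g(z_a)$ lives on the diagonal'' is soft.
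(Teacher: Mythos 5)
Your plan reproduces the first half of Luecking's argument correctly: you set up the matrix, observe that all $(\rb+1)$-minors vanish, and antisymmetrize to produce an integral identity involving the Vandermonde $\Delta$ and the density $g=d\mu/d\lambda$. But the crucial step — ``hence this antisymmetrization vanishes as a distribution'' — is exactly the step that does not work, and it is the step the paper's proof is specifically designed to avoid. A compactly supported $L^1$ (or even smooth) function that is orthogonal to all analytic polynomials need not vanish: any $\bar\partial G$ with $G$ compactly supported is a counterexample, as the paper itself notes in Lemma~\ref{Lem.compsup}. Neither of your two proposed repairs closes the gap: (a) $L^2$-density of analytic polynomials in a Bergman space is a statement about the analytic subspace, and tells you nothing about the orthogonal complement of that subspace, which is where your function is free to live; and (b) the $\bar\partial$-argument merely identifies your function as a $\bar\partial$-image, which is consistent with it being nonzero. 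The conclusion ``$\prod g(z_a)$ lives on the diagonal'' therefore does not follow from what you have proved.

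The paper resolves this by not stopping at $\int h\,\overline{\Delta}\,\prod g\,d\lambda=0$ for analytic $h$. Instead, in the identity $\f^{\otimes N}(P\overline{Q})=0$ (valid for $P$ analytic and $Q$ antisymmetric analytic) one writes both $P=P_1V$ and $Q=Q_1V$ with $P_1,Q_1$ symmetric analytic and $V=\Delta$ the Vandermonde, obtaining
\begin{equation*}
\int_{\C^N} P_1(Z)\,\overline{Q_1(Z)}\,|V(Z)|^2\,\prod_a g(z_a)\,d\lambda(Z)=0 .
\end{equation*}
The decisive gain is that $\{P_1\overline{Q_1}\}$ with $P_1,Q_1$ symmetric analytic is an \emph{algebra} of functions on $\C^N$, closed under conjugation, which (after passing to the quotient by the permutation action) is dense in all continuous symmetric functions by Stone--Weierstrass, in the topology of uniform convergence on compacts. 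Combined with the permutation-invariance of $|V|^2\,d\mu^{\otimes N}$, this forces the measure $|V(Z)|^2\,d\mu^{\otimes N}(Z)$ to vanish, hence $\mu^{\otimes N}$ lives on $\{V=0\}$, a Lebesgue-null set, and absolute continuity gives $\mu=0$. The two essential ideas you are missing are: replace the test class ``analytic $h$'' by the conjugation-closed algebra of symmetric bi-polynomials, and replace $\overline{\Delta}$ by $|\Delta|^2$ so that the test class together with the positivity of $|\Delta|^2$ suffices for the Stone--Weierstrass step. Without these, the passage from the minor identities to ``$\prod g(z_a)$ vanishes off the diagonal'' is unjustified.
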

  We formulate and prove here Luecking's theorem only in    in  the case of an absolutely continuous measure; the case of more singular measures will be taken care of later, as a part of the   general distributional setting.
 In the  proof, which follows \cite{Lue2}, we separate a lemma that will be used  further on.
 \begin{lemma}\label{3.LueLem1} Let $\f$ be a linear functional on polynomials in $z,\bar{z}$. Denote by $\Ac(\f)$  the matrix with elements $\f(z^j\bar{z}^k)$. Then the following are equivalent:
 \begin{enumerate}\item the matrix $\Ac(\f)$ has finite rank not greater than $\rb$;
 \item for any collections of nonnegative integers $J=\{j_0,\dots,j_\rb\}$, $K=\{k_0,\dots,k_\rb\},$
     \begin{equation}\label{3.VanishingMod}
     \textstyle
     \f^{\otimes N}\blp \prod_{i\in(0,\rb)} z_i^{j_i} \det \overline{z_i}^{k_l} \brp = 0,
     \end{equation}
     where $N=\rb+1$.
 \end{enumerate}
 \end{lemma}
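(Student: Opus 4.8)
\emph{Proof sketch.}\quad The plan is to recognize the left-hand side of \eqref{3.VanishingMod} as an $N\times N$ minor of the matrix $\Ac(\f)$, after which the equivalence (1)$\Leftrightarrow$(2) reduces to the elementary fact that a (possibly infinite) matrix has rank at most $\rb$ exactly when all of its $(\rb+1)\times(\rb+1)$ minors vanish.

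First I would expand the generalized Vandermonde determinant by the Leibniz rule,
\[
\det\bigl(\overline{z_i}^{\,k_l}\bigr)_{i,l=0}^{\rb}=\sum_{\s\in S_{\rb+1}}\sgn(\s)\prod_{i=0}^{\rb}\overline{z_i}^{\,k_{\s(i)}},
\]
so that the argument of $\f^{\otimes N}$ becomes $\sum_{\s}\sgn(\s)\prod_{i=0}^{\rb}z_i^{j_i}\overline{z_i}^{\,k_{\s(i)}}$. Since $\f^{\otimes N}$ is the tensor-product functional acting in the $N$ independent groups of variables $z_0,\dots,z_{\rb}$, it factors over the product, and I obtain
\[
\f^{\otimes N}\Bigl(\prod_{i=0}^{\rb}z_i^{j_i}\,\det\overline{z_i}^{\,k_l}\Bigr)=\sum_{\s\in S_{\rb+1}}\sgn(\s)\prod_{i=0}^{\rb}\f\bigl(z^{j_i}\overline z^{\,k_{\s(i)}}\bigr)=\det\bigl(\Ac(\f)_{j_i,k_l}\bigr)_{i,l=0}^{\rb}.
\]
Hence \eqref{3.VanishingMod} says precisely that the $(\rb+1)\times(\rb+1)$ submatrix of $\Ac(\f)$ cut out by the rows $j_0,\dots,j_\rb$ and the columns $k_0,\dots,k_\rb$ is singular; letting $J,K$ range over all tuples of nonnegative integers, this sweeps out all $(\rb+1)\times(\rb+1)$ minors of $\Ac(\f)$, the tuples with a repeated entry contributing a determinant that vanishes identically and so imposing no condition.

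It then remains to record the minor criterion for infinite matrices. That $\rank\Ac(\f)\le\rb$ implies the vanishing of all $(\rb+1)$-minors is immediate, since each finite submatrix inherits rank $\le\rb$. For the converse I would argue contrapositively: if $\rank\Ac(\f)\ge\rb+1$, choose $\rb+1$ linearly independent columns; the family of coordinate functionals they induce on $\C^{\rb+1}$ has trivial common kernel, hence $\rb+1$ of them, indexed by suitable rows, form a basis of the dual of $\C^{\rb+1}$, which is exactly the assertion that the corresponding $(\rb+1)\times(\rb+1)$ submatrix is invertible, contradicting (2). I do not expect any genuine obstacle here: the computation is pure bookkeeping, and the only mildly delicate point is this infinite-dimensional form of the minor criterion, which is nonetheless completely elementary.
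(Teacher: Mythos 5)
Your proposal is correct and follows essentially the same line as the paper: both arguments rest on recognizing the left side of \eqref{3.VanishingMod}, after distributing $\f^{\otimes N}$ over the Leibniz expansion of the determinant, as the $(\rb+1)\times(\rb+1)$ minor of $\Ac(\f)$ on rows $J$ and columns $K$, and then invoking the characterization of rank by vanishing of minors. The paper phrases this by starting from general polynomials $f_j,g_k$ (for which $\det(\f(f_j\bar g_k))=0$ follows from the rank bound) and pulling $\f$ out column by column before specializing to monomials, whereas you compute the monomial case directly and make the minor criterion for infinite matrices explicit, including a short contrapositive argument for its converse; this is merely a tidier presentation of the same idea, not a different method.
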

 \begin{proof}
 Since passing to linear combinations of rows and columns does not increase the rank of the matrix, it follows that for any polynomials $f_j(z), g_k(z)$, with $j,k=0,\dots,\rb,$ the determinant $\Det(\f(f_j\bar{g}_k))$ vanishes.

The determinant is linear in each column and $\f $ is a linear
functional, so we can write
\begin{equation*}
    \f\left(f_0(z)\times\left|
      \begin{matrix}
        \overline{g_0(z)}\vbox to 14.5pt{} & \mu(f_1\bar g_0) & \dots & \f(f_\rb\bar g_0) \\
        \overline{g_1(z)} & \f(f_1\bar g_1) & \dots & \f(f_\rb\bar g_1) \\
          \vdots          &      \vdots      & \ddots&       \vdots     \\
        \overline{g_\rb(z)} & \f(f_1\bar g_\rb) & \dots & \f(f_\rb\bar g_\rb)
        \vtop to 5pt{}
      \end{matrix}
    \right|\right) = 0
\end{equation*}
We introduce the variable $z_0$ in place of $z$ above and use $\f_0$ for
$\f$ acting in the variable $z_0$. We repeat this process in each
column (using the variable $z_j$ in column $j$ and the notation $\f_j$ for $\f$
acting in $z_j$) to obtain
\begin{equation}\label{vanishing}\textstyle
    \f_0\lp\f_1\lp \dots \f_\rb\lp
        \prod_{k=0}^{\rb}f_k(z_k)\det \blp g_j(z_k)\brp\rp\dots\rp\rp = 0
\end{equation}
We now specialize to the case where each $f_i=z^{j_i}$,  $g_i=z^{k_i}$ and arrive at \eqref{3.VanishingMod}, thus proving the implication $1\Rightarrow2$. The converse implication follows by going along the above reasoning in the opposite direction.
 \end{proof}
\begin{proof}[Proof of Theorem \ref{3.Thm.Luecking}] We identify $\C$ and $\R^2$ with co-ordinates $z=x+iy$. Consider the functional $\f(f)=\f_\m(f)=\int f(z) d\m(z)$.
Write $Z$ for the $N$-tuple $(z_0, z_1,\dots, z_\rb)$ and  $V_J(Z)$
for the determinant $\det\left(z_i^{k_j}\right)$. By Lemma \ref{3.LueLem1},
\begin{equation}\label{3.vanishing}
    \f^{\otimes N}\left(Z^K\overline{V_J(Z)}\right)=0.
\end{equation}

Taking finite sums of
equations \eqref{3.vanishing}, we get for any polynomial $P(Z)$ in $N$
variables:
\begin{equation}\label{vanish2}
    \f^{\otimes N} \left( P(Z)\overline{V_J(Z)} \right) = 0.
\end{equation}
By taking  linear  combinations of antisymmetric polynomials $V_J(Z)$ one can obtain any antisymmetric polynomial $Q(Z)$ (see \cite{Lue2}) for details). Thus
\begin{equation}\label{3.vanish3}
    \f^{\otimes N} \left( P(Z)\overline{Q(Z)} \right) = 0
\end{equation}
for any polynomial $P(Z)$ and any antisymmetric polynomial $Q(Z)$. In its turn, the polynomial $Q(Z)$ is divisible by the lowest degree antisymmetric polynomial, the Vandermonde polynomial $V(Z)=\prod_{0\le j\le k\le \rb}(z_j-z_k)$, $Q(Z)=Q_1(Z)V(Z)$ with a symmetric polynomial $Q_1(Z)$. We write \eqref{3.vanish3} for $Q$ of this form and $P$ having the form $P(Z)=P_1(Z)V(Z)$. So we arrive at
\begin{equation}\label{3.vanish4}
   \f^{\otimes N} \left(P_1(Z)\overline{Q_1(Z)}|V(Z)|^2\right) = 0 \quad\mbox{for all
   symmetric $P_1$ and $Q_1$}.
\end{equation}

It is clear that finite sums of products of the form
$P_1(Z)\overline{Q_1(Z)}$ (with $P_1$ and $Q_1$ symmetric) form an
algebra $\As$ of functions on $\C$ which contains the constants and is
closed under conjugation. It doesn't separate points because each
element is constant on sets of points that are permutations of one
another. Therefore  we  define an equivalence relation $\sim$ on $\C^{N}:$    $Z_1
\sim Z_2$ if and only if $Z_2 = \pi(Z_1)$ for some permutation $\pi$.
Let $Z = (z_0, \dots, z_\rb)$ and $W= (w_0,\dots,w_\rb)$. If $Z\not\sim W$
then the polynomials $p(t) = \prod(t-z_j)$ and $q(t) = \prod(t-w_j)$
have different zeros (or the same zeros with different orders). This
implies that the coefficient of some power of $t$ in $p(t)$ differs from
the corresponding coefficient in $q(t)$. Thus there is an elementary
symmetric function that differs at $Z$ and $W$. Consequently, $\As$
separates equivalence classes.

We  give the quotient space $\C^{N}/{\sim}$ the standard quotient
space topology. If $K$ is any compact set in $\C^{N}$ that is invariant
with respect to $\sim$, then $K/{\sim}$ is compact and Hausdorff. Also,
any symmetric continuous function on $\C^N$ induces a continuous
function on $\C^{N}/{\sim}$ (and conversely). Thus we can apply the
Stone-Weierstrass theorem (on $K/{\sim}$) to conclude that $\As$ is dense
in the space of continuous symmetric functions, in the topology of
uniform convergence on any compact set. Therefore, for any continuous
symmetric function $f(Z)$
\begin{equation}
    \int_{\C^{N}} f(Z) |V(Z)|^2\,d\mu^{\otimes N}(Z) = 0
\end{equation}
If $f$ is an arbitrary continuous function, the above integral will be
the same as the corresponding integral with the symmetrization of $f$ replacing $f$. This is
because the function $|V(Z)|^2$ and the product measure $\mu^{\otimes N}$ are both
invariant under permutations of the coordinates. We conclude that this
integral vanishes for \emph{any continuous $f$} and so the measure
$|V(Z)|^2\,d\mu^{\otimes N}(Z)$ must be zero. Thus, $\mu^{\otimes N}$ is supported on the
set where $V$ vanishes, i.e. on the set of Lebesgue measure zero. Since $\mu^{\otimes N}$ is absolutely continuous, it must be zero.
\end{proof}

 The initial setting of  Theorem \ref{3.Thm.Luecking} dealt with arbitrary measures, as it is explained in the Introduction. A more advanced result was obtained in \cite{AlexRoz}, where Luecking's theorem was carried over to distributions.

 \begin{theorem}\label{3.Th.Alexroz}Suppose that $F\in \Ec'(\O)$ is a distribution with compact support in $\O\subset\C$ and the Toeplitz operator $\Tc_F$ has finite rank $\rb$. Then the distribution $F$ is a finite combination of $\delta$-distributions at some points in  $\O$ and their derivatives,
 \begin{equation}\label{3.AlexRozEq}
    F=\sum_{j\le \rb}L_{j}\d(z-z_j),
 \end{equation}
 $L_j$ being differential operators.
 \end{theorem}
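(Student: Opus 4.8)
The plan is to upgrade the proof of Theorem \ref{3.Thm.Luecking} from measures to distributions by running the same algebraic machinery, but replacing the Stone--Weierstrass/measure-theoretic endgame with a structure theorem for distributions supported on the diagonal-type variety cut out by the Vandermonde polynomial. First I would fix the functional $\f = \f_F$ defined on polynomials in $z,\bar z$ by $\f(p) = \langle \s F, p\rangle$, where $\s$ is the Radon--Nikodym density; this is legitimate since $F$ has compact support and $\s$ is smooth on that support (by the hypotheses on $\Lc$ in Section \ref{setting}, the Bergman kernel $P(x,\cdot)$ is smooth, so $\f$ is the matrix-generating functional of Lemma \ref{3.LueLem1} applied to the monomial systems $\Fs_1 = \Fs_2 = \{z^j\}$). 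By Proposition \ref{2.Prop.Matrix} the associated matrix $\Ac(\f)$ has rank $\le \rb$, so Lemma \ref{3.LueLem1} applies verbatim and yields, exactly as in the measure case, the vanishing relation
\begin{equation}\label{plan.vanish}
  \f^{\otimes N}\blp P_1(Z)\,\overline{Q_1(Z)}\,|V(Z)|^2 \brp = 0
\end{equation}
for all symmetric polynomials $P_1,Q_1$ in $Z = (z_0,\dots,z_\rb)$, with $N = \rb+1$ and $V$ the Vandermonde polynomial. Everything up to this point is purely algebraic and carries over without change, since $\f^{\otimes N}$ is a well-defined distribution on $\C^N$ (the tensor product $F^{\otimes N}$ times the smooth density $\s^{\otimes N}$) with compact support.

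The crux is then to reinterpret \eqref{plan.vanish}. I would argue that the functions $P_1(Z)\overline{Q_1(Z)}$, as $P_1,Q_1$ range over symmetric polynomials, are dense — in a suitable $C^\infty$ topology on a neighbourhood of the compact support, not merely the uniform topology — in the space of smooth functions that are invariant under the permutation action of $S_N$ on $\C^N$. The natural tool here is to pass to the quotient $\C^N/S_N$, which by the fundamental theorem of symmetric functions is (as an affine variety, via elementary symmetric functions) isomorphic to $\C^N$ again; a symmetric smooth function descends to a smooth function of the elementary symmetric polynomials $e_1(Z),\dots,e_N(Z)$, and one approximates the latter together with all derivatives by polynomials on compact sets, then pulls back and polarizes to realize the result as limits of sums $P_1\overline{Q_1}$. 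Combined with the fact that $|V(Z)|^2 d(F^{\otimes N}\s^{\otimes N})$ is $S_N$-invariant, so that testing against arbitrary smooth $f$ reduces to testing against its symmetrization, \eqref{plan.vanish} forces
\begin{equation}\label{plan.product}
  |V(Z)|^2\,\s(z_0)\cdots\s(z_\rb)\,F^{\otimes N} = 0 \quad\text{as a distribution on }\C^N.
\end{equation}
Since $\s$ is nonvanishing (a.e., and in fact can be taken positive on the relevant region), this says $|V(Z)|^2 F^{\otimes N} = 0$.

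From \eqref{plan.product} I would extract the conclusion by a dimension/support argument: a compactly supported distribution annihilated by $|V(Z)|^2$ must be supported on the zero set $\Sigma = \{Z : V(Z)=0\} = \bigcup_{j<k}\{z_j = z_k\}$, and more is true — multiplication by the real-analytic, nonnegative function $|V|^2$ which vanishes to order exactly $2$ transversally along each hyperplane $\{z_j=z_k\}$ kills only distributions that are, locally off the lower strata, finite-order transversal derivatives of $\delta$ along these hyperplanes. Iterating over the stratification of $\Sigma$ (the deepest stratum being the full diagonal $z_0=\dots=z_\rb$), one sees $F^{\otimes N}$ is supported on $\Sigma$; but a tensor power $F^{\otimes N}$ is supported on $(\supp F)^N$, and the only way $(\supp F)^N \subset \Sigma$ is if $\supp F$ has at most $\rb$ points — otherwise one could pick $\rb+1 = N$ distinct points of $\supp F$, giving a point of $(\supp F)^N$ off $\Sigma$, contradiction. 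Hence $\supp F = \{z_1,\dots,z_m\}$ with $m \le \rb$, and a compactly supported distribution with finite support is automatically a finite sum $\sum_j L_j \delta(z - z_j)$ with $L_j$ constant-coefficient differential operators, which is \eqref{3.AlexRozEq}. The main obstacle I anticipate is making the density/quotient step rigorous in the smooth category: one must be careful that symmetric $C^\infty$ functions really are smooth functions of the elementary symmetric polynomials (true, by a theorem of Glaeser/Schwarz), that the polynomial approximation can be done with control of all derivatives uniformly on a compact neighbourhood of $(\supp F)^N$, and that polarization of $|p(Z)|^2$-type expressions genuinely recovers all products $P_1\overline{Q_1}$; alternatively one can sidestep part of this by testing \eqref{plan.vanish} against bump functions times approximations and using that $F^{\otimes N}$ has finite order, reducing to a finite-dimensional linear-algebra density statement. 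The support-extraction step, by contrast, is standard once \eqref{plan.product} is in hand.
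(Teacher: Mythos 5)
Your opening steps, up to and including the vanishing relation \eqref{plan.vanish} for all symmetric polynomials $P_1,Q_1$, are correct: Lemma \ref{3.LueLem1} and the antisymmetrization reductions are purely algebraic and carry over to distributions unchanged. But the density step that you yourself flag as the crux is not a technicality to be repaired by Glaeser or Schwarz; it is \emph{false}, and the paper says exactly this in Remark \ref{3.Remark} and proves it in Example \ref{example}. The algebra $\As$ generated by products $P_1(Z)\overline{Q_1(Z)}$ with $P_1,Q_1$ holomorphic symmetric polynomials is precisely $\C[e_1,\dots,e_N,\bar e_1,\dots,\bar e_N]$, and this is a \emph{proper} subalgebra of the polynomial $S_N$-invariants of $\C^N\cong\R^N\oplus\R^N$ under the diagonal permutation action: already for $N=2$, the invariant $z_0\bar z_1+z_1\bar z_0$ is not in $\C[e,\bar e]$, since the only bidegree-$(1,1)$ element of $\C[e,\bar e]$ is $e_1\bar e_1$. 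So the Hilbert basis that Schwarz's theorem requires must contain ``mixed'' multisymmetric polynomials, and a generic smooth $S_N$-invariant is a smooth function of \emph{those}, not merely of $e_j,\bar e_j$; the pullback--polarization step therefore cannot land inside $\As$. Example \ref{example} makes this quantitative: the operator $V(D)V(\bar D)$ of order $N(N-1)$ annihilates, at the origin (and by translation at every point of the full diagonal), each $P_1\overline{Q_1}$ with a symmetric factor, while $V(D)V(\bar D)\,|V(Z)|^2$ is a positive constant. Since $(\supp F)^N$ always meets the full diagonal (take $(w,\dots,w)$, $w\in\supp F$), no sequence from $\As$ can converge to $|V(Z)|^2$ in $C^{N(N-1)}$ on any neighbourhood of $(\supp F)^N$; hence \eqref{plan.vanish} does \emph{not} imply \eqref{plan.product} once the order of $F^{\otimes N}$ reaches $N(N-1)$, and your fallback of testing against bump functions and approximations reduces to the same finite-order density question, which is the one that fails. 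The support-extraction argument at the end of your proposal is fine, but it never receives the input \eqref{plan.product} it needs.

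The paper's proof of Theorem \ref{3.Th.Alexroz} avoids tensoring $F$ with itself entirely and exploits the one-complex-variable structure through a Sobolev-regularity bootstrap. Linear dependence of the first $\rb+1$ columns of $\Ac(F)$ gives a polynomial $h_1(\bar z)$ with $\langle h_1(\bar z)F,z^k\rangle=0$ for all $k$; Lemma \ref{Lem.compsup} then produces a compactly supported $\bar\partial$-primitive $F^{(1)}$ of $h_1F$, and ellipticity of $\partial/\partial\bar z$ gives $F^{(1)}\in H^{s+1}$ whenever $F\in H^{s}$. The identity \eqref{newmatrix} shows $\rank\Ac(F^{(1)})\le\rank\Ac(F)$, so iterating $[-s]+1$ times reaches an $L^2$ function to which the measure version of Luecking's theorem applies, forcing $F^{(n)}=0$; unwinding $\partial F^{(j)}/\partial\bar z=h_jF^{(j-1)}$ localizes $\supp F$ to the finite union of zero sets of the $h_j$, and an interpolation argument caps the number of points at $\rb$. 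If you want to keep a tensor-power argument, the known route works only for measures (\cite{Lue2}, and \cite{BRChoe1} in several variables), where uniform convergence suffices and Stone--Weierstrass applies; for genuine distributions the $\bar\partial$-bootstrap of \cite{AlexRoz} appears to be unavoidable.
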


We start with some observations about distributions in $\Ec'(\C)$. For such distribution we denote by
$\psupp F$ the complement of the unbounded component of the complement of $\supp F$.
\begin{lemma}\label{Lem.compsup}Let $F\in \Ec'(\C)$.
Then the following two statements are equivalent:\\
 a) there exists a distribution
$G\in \Ec'(\C)$ such that $\frac{\partial G}{\partial \bar{z}}=F$, moreover $\supp G\subset\psupp
F$;\\ b) $F$ is orthogonal to all polynomials of $z$ variable,  i.e. $\langle F,z^k\rangle=0$ for
all $k\in\Z_+$.
\end{lemma}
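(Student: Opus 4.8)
The plan is to prove the equivalence of (a) and (b) by a Fourier–analytic argument, exploiting that $\partial/\partial\bar z$ becomes multiplication by (a multiple of) $\bar\zeta$ on the Fourier–Laplace transform side, and that compact support of $F$ corresponds to $\hat F$ being an entire function of exponential type (Paley–Wiener–Schwartz).

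\textbf{The direction (a)$\Rightarrow$(b).} This is the easy half. If $G\in\Ec'(\C)$ satisfies $\partial G/\partial\bar z=F$, then for every $k\in\Z_+$ integration by parts (legitimate since $z^k$ is smooth and $G$ is compactly supported) gives
\begin{equation*}
\langle F,z^k\rangle=\Big\langle \tfrac{\partial G}{\partial\bar z},z^k\Big\rangle=-\Big\langle G,\tfrac{\partial}{\partial\bar z}z^k\Big\rangle=0,
\end{equation*}
because $z^k$ is holomorphic. No support information about $G$ is even needed here.

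\textbf{The direction (b)$\Rightarrow$(a).} Assume $\langle F,z^k\rangle=0$ for all $k\ge 0$. Define a candidate primitive by convolving $F$ with the Cauchy kernel: set $G=E*F$ where $E(z)=\tfrac1{\pi z}$ is the fundamental solution of $\partial/\partial\bar z$, so that $\partial G/\partial\bar z=F$ automatically in $\Dc'(\C)$. The content of the lemma is then twofold: first, that $G$ is actually compactly supported (a priori $E*F$ is only a distribution on all of $\C$, smooth off $\supp F$), and second, that its support is contained in $\psupp F$, the ``holes-filled'' version of $\supp F$. I would establish this on the Fourier side. By Paley–Wiener–Schwartz, $\hat F(\zeta)$ extends to an entire function on $\C^2$ (identifying $\C$ with $\R^2$) of exponential type with type-indicator controlled by the convex hull of $\supp F$. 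The hypothesis $\langle F,z^k\rangle=0$ says that all the ``holomorphic moments'' of $F$ vanish, which forces $\hat F$ to vanish to infinite order along the anti-holomorphic characteristic direction, i.e. $\hat F(\zeta)=\bar\zeta\,\widehat{G}(\zeta)$ with $\widehat G$ again entire — and crucially, the division of an entire function of exponential type by the coordinate $\bar\zeta$ yields an entire function of the same exponential type (no enlargement of the indicator diagram), so $G$ is again a compactly supported distribution, with $\supp G$ inside the convex hull of $\supp F$. To get the sharper $\supp G\subset\psupp F$ rather than merely the convex hull, I would instead argue locally/geometrically: $G=E*F$ is holomorphic (annihilated by $\partial/\partial\bar z$) on $\C\setminus\supp F$, hence on the unbounded component $U$ of $\C\setminus\supp F$ it is a holomorphic function; one checks $G\to 0$ at $\infty$ (from the $1/|z|$ decay of $E$ together with the vanishing of the zeroth moment $\langle F,1\rangle=0$, which kills the leading term of the Laurent-type expansion of $E*F$ at infinity), so by Liouville/the identity theorem $G\equiv 0$ on $U$. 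Since $\psupp F=\C\setminus U$, this gives $\supp G\subset\psupp F$.

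\textbf{Main obstacle.} The genuinely delicate point is the passage from ``$E*F$ is a well-defined distribution on $\C$'' to ``$E*F$ has compact support,'' i.e. the division step. One must be careful that $E*F$ makes sense at all when $F$ is merely a compactly supported distribution (it does, since one factor is compactly supported), and then that vanishing of \emph{all} moments $\langle F,z^k\rangle$ — not just finitely many — is exactly what is needed; with only finitely many moments vanishing one would get $G$ holomorphic near $\infty$ but not vanishing there. The Liouville-type argument on the unbounded component $U$ needs $U$ connected and unbounded (true by definition of $\psupp$) and needs the decay estimate for $E*F$ at infinity, which I would get by writing, for $z$ far from $\supp F$,
\begin{equation*}
(E*F)(z)=\tfrac1\pi\Big\langle F(w),\tfrac1{z-w}\Big\rangle=\tfrac1\pi\sum_{k\ge0}\frac{\langle F,w^k\rangle}{z^{k+1}}=0
\end{equation*}
where the geometric expansion converges uniformly for $w\in\supp F$ once $|z|$ exceeds the diameter of $\supp F$ about a fixed center — so in fact $G$ vanishes identically on the complement of a large disk, and then the identity theorem propagates this to all of $U$. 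This estimate, and justifying the interchange of $F$ with the series (uniform convergence in the test-function topology on a neighbourhood of $\supp F$), is the only real work; everything else is bookkeeping.
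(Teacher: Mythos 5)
Your operative argument is exactly the paper's: convolve $F$ with $1/(\pi z)$ to get $G$ with $\partial G/\partial\bar z=F$, observe that $G$ is holomorphic off $\supp F$ by ellipticity, expand $G(z)=\pi^{-1}\langle F,(z-w)^{-1}\rangle=\pi^{-1}\sum_{k\ge 0}\langle F,w^k\rangle z^{-k-1}=0$ for $|z|$ large, and propagate the vanishing to the whole unbounded component of $\C\setminus\supp F$ by the identity theorem, giving $\supp G\subset\psupp F$. The Fourier/Paley--Wiener detour is unnecessary (and, as you noted, would only give the convex-hull bound), and the passing appeal to ``Liouville'' is inapt since $G$ is not entire --- but you immediately replace both with the correct geometric-series/identity-theorem argument, so the proof as a whole is right and coincides with the paper's.
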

 \begin{proof} The implication $a)\Longrightarrow b)$ follows from the relation
\begin{equation}\label{byparts}
    \langle F, z^k\rangle=\langle \frac{\partial G}{\partial \bar{z}},z^k\rangle
    =\langle G ,\frac{\partial z^k}{\partial \bar{z}}\rangle=0.
\end{equation}

We prove that $b)\Longrightarrow a)$.  Put $G:=F*\frac{1}{\pi z}\in \Sc'(\C)$, the convolution being
well-defined because $F$ has compact support. Since $\frac{1}{\pi z}$ is the fundamental solution of
the Cauchy-Riemann operator $\frac{\partial}{\partial\bar{z}}$, we have $\frac{\partial G}{\partial
\bar{z}}=F$ (cf., for example, \cite{Ho}, Theorem 1.2.2).  By the ellipticity of the Cauchy-Riemann
operator, $\singsupp G\subset\singsupp F\subset \supp F$, in particular, this means that $G$ is a
smooth function outside $\psupp F$, moreover,  $G$ is analytic outside $\psupp F$ (by $\singsupp F$
we denote the singular support of the  distribution $F$, see, e.g., \cite{Ho}, the largest open set
where the distribution coincides with a smooth function).
  Additionally,
$G(z)=\langle F,\frac1{\pi(z-w)}\rangle =\pi^{-1}\sum_{k=0}^\infty z^{-k-1} \langle F,w^k\rangle=0$
if $|z|>R$ and $R$ is sufficiently large. By analyticity this implies $G(z)=0$ for all $z$  outside
$\psupp F$.\end{proof}

\begin{proof}[Proof of Theorem \ref{3.Th.Alexroz}]

The  distribution  $F$, as any distribution with compact support, is of finite order,
therefore it belongs to some Sobolev space, $F\in H^s$ for certain $s\in \R^1$. If $s\ge0,$ $F$ is a
function and must be zero by Luecking's theorem. So, suppose that $s<0$.

Consider the first $\rb+1$ columns in the matrix $\Ac(F)$, i.e.
\begin{equation}\label{columns}
    a_{kl}=(\Tc_Fz^k,z^l)=\langle \s F,z^k\bar{z}^l\rangle, l=0,\dots \rb;\  k=0,\dots.
\end{equation}
Since the rank of the matrix $\Ac(F)$ is not greater than $\rb$, the columns are linearly dependent, in
other words, there exist coefficients $c_0,\dots,c_\rb$ such that $\sum_{l=0}^\rb a_{kl}c_l=0$ for any
$k\ge 0$. This relation can be written as
\begin{equation}\label{ColPolyn}
    \langle F,z^k h_1(\bar{z}) \rangle=\langle h_1(\bar{z})F, z^k\rangle=0, \ \ h_1(\bar{z}) = \sum_{k=0}^\rb c_l\bar{z}^l.
\end{equation}
Therefore, the distribution $h_1(\bar{z})F\in H^s$
satisfies the conditions of Lemma \ref{Lem.compsup}
and hence there exists a compactly supported
distribution $F^{(1)}$ such that $\frac{\partial
F^{(1)}}{\partial \bar{z}}=h_1F$. By the
ellipticity of the Cauchy-Riemann operator, the
distribution $F^{(1)}$ is less singular than $F$,
$F^{(1)}\in H^{s+1}$. At the same time,
\begin{gather}\label{newmatrix}\nonumber
    \langle F^{(1)}, z^k\bar{z}^l \rangle=
    (l+1)^{-1}\langle F^{(1)},\frac{\partial z^k\bar{z}^{l+1 }}{\partial \bar{z}}
    \rangle\\ =(l+1)^{-1}\langle h_1(\bar{z})F, z^k\bar{z}^{l}\rangle =
    (l+1)^{-1}\langle F,
    z^k\bar{z}^{l}h_1(\bar{z})\rangle,
\end{gather}
and therefore the rank of the matrix $\Ac({F^{(1)}})$ does not exceed the rank of the matrix $\Ac(F)$.

We repeat this procedure sufficiently many (say, $ n=[-s]+1$) times and arrive at the distribution
$F^{(n)}$ which is, in fact, a function in  $L_2$, for which the corresponding matrix $\Ac({F^{(n)}})$ has finite rank. By Luecking's
theorem, this may happen only if $F^{(n)}=0$.

Now we go back to the initial distribution $F$. Since,
by our construction, $\frac{\partial F^{(n)}}{\partial
\bar{z}}=h_n(\bar{z})F^{(n-1)}$, we have that
$h_n(\bar{z})F^{(n-1)}=0$ and therefore $\supp
F^{(n-1)}$ is a subset of the set of zeroes of the
polynomial $h_n(\bar{z})$. On the next step, since
$\frac{\partial F^{(n-1)}}{\partial
\bar{z}}=h_{n-1}(\bar{z})F^{(n-2)}$,  we obtain that
$\supp F^{(n-2)}$ lies in the union of  sets of zeroes
of polynomials $h_{n-1}(\bar{z})$ and $h_n(\bar{z})$.
After having gone all the way back to $F$, we obtain
that its support is a finite set of points lying in
the union of zero sets of polynomials $h_{j}$. A
distribution with such support must be a linear
combination of $\d$ - distributions in these points
and their derivatives, $F=\sum L_q\d(z-z_q)$, where
$L_q=L_q(D)$ are some differential operators. Finally,
to show that the number of points $z_q$ does not
exceed $\rb$, we construct for each of them the
interpolating polynomial $f_q(z)$ such that
$L_q(-D)|f_q|^2\ne0$ at the point $z_q$   while at the
points $z_{q'},\ q'\ne q$, the polynomial $f_q$ has
zero of sufficiently high order, higher than the order
of $L_{q'}$, so that $L_{q'}(f_q g)(z_{q'})=0$ for any
smooth function $g$. With such choice of polynomials,
the matrix with entries $\langle
F,f_q\overline{f_{q'}}\rangle $ is the diagonal matrix
with nonzero entries on the diagonal, and therefore
its size (that equals the number of the points $z_q$)
cannot be greater than the rank of the whole matrix
$\Ac(F)$, i.e., cannot be greater than $\rb$.
\end{proof}
\begin{remark}\label{3.Remark}The attempt to extend directly the original proof of Theorem \ref{3.Thm.Luecking} to the
distributional case would probably meet certain complications.  The following property is crucial in this proof: the algebra generated by polynomials of
the form $P_1(Z)\overline{Q_1(Z)}$ with symmetric $P_1,Q_1$ is dense (in the sense of the uniform convergence on compacts) in the
space of symmetric continuous functions. This latter property is proved above by a reduction to the Stone-Weierstrass theorem.

Now, if $F$ is a distribution that is not a measure, the analogy of reasoning in the proof
would require a similar density property, however not in the sense of the uniform convergence on
compacts, but in a stronger sense, the uniform convergence together with derivatives up to some fixed
order (depending on the order of the distribution $F$.)  The Stone-Weierstrass theorem seems  not to
help here since  it deals with  the uniform convergence only. Moreover, the required more general density
statement itself is \emph{wrong}, which follows   from the  construction below (see \cite{AlexRoz}).\end{remark}

\begin{example}\label{example} The algebra generated by the functions having  the form
$P_1(Z)\overline{Q_1(Z)}$, where $P_1,Q_1$ are symmetric polynomials of the variables
$Z=(z_0,\dots,z_N)$ is not dense in the sense  of the uniform  $C^l$-convergence  on compact sets  in
the space of $C^l$-differentiable symmetric functions, as long as $l\ge N(N-1)$.
To show this,  consider the differential operator $V(D)=\prod_{j<k}(D_j-D_k)$, $D_j=\frac{\partial}{\partial {z}}$. It is easy to check that
$V(D)H$ is symmetric for any antisymmetric function $H(Z)$ and $V(D)H$ is antisymmetric for any
symmetric function $H(Z)$. Further on, consider any function $H(Z)$ of  the form
$H(Z)=P_1(Z)\overline{Q_1(Z)}$ where $P_1(Z),Q_1(Z)$ are analytic polynomials. If at least one of
them is symmetric, we have
\begin{equation}\label{sympol}
    (V(D)V(\bar{D}))H(0)=0.
\end{equation}
In fact,$ V(D)V(\bar{D}) P_1(Z)\overline{Q_1(Z)} =[(V(D)P_1(Z)][\overline{V({D}){Q_1(Z)}}]$. In the
last expression, for  the symmetric polynomial $P_1$ , the corresponding polynomial $V(D)P_1(Z)$ is
antisymmetric, and therefore equals zero for $Z=0$. Now consider the symmetric function
$|V(Z)|^2=V(Z)\overline{V(Z)}$. We have
 $$V(D)V(\bar{D})V(Z)\overline{V(Z)} =[V(D)V(Z)][V(\bar{D})\overline{V(Z)}].$$
Now note that $V(Z)=\sum_\kappa C_{\kappa}\prod z_j^{\kappa_j}$ where the summing goes over
multi-indices $\kappa=(\kappa_1,\dots,\kappa_N)$, $|\kappa|=N$ and not all of real coefficients
$C_\kappa$ are zeros. Simultaneously, $V(D)=\sum_{\kappa}C_{\kappa}\prod D_j^{\kappa_j}$ with the
same coefficients. We recall now  that $\prod D_j^{\kappa_j}\prod z_j^{\kappa'_j}=0$ if $|\kappa|=|\kappa'|,$ $\kappa\ne
\kappa'$  and it equals $\kappa!$ if $\kappa = \kappa'$. Therefore,
$V(D)V(Z)=\sum_{\kappa}C_{\kappa}^2\kappa!$ is a positive constant.  In this way we have constructed
the differential operator $V(D)V(\bar{D})$  of order $N(N-1)$, satisfying \eqref{sympol} for any
function of the form $H(Z)=P_1(Z)\overline{Q_1(Z)}$ with symmetric $P_1,Q_1$, and  not vanishing on
some symmetric differentiable function $|V(Z)|^2$. Therefore the function $|V(Z)|^2$ cannot be
approximated by linear combinations of the functions $H(Z)=P_1(Z)\overline{Q_1(Z)}$ in the sense of
the uniform $C^{N(N-1)}$ convergence on compacts.\end{example}

D. Luecking's theorem was extended in a different direction by T.Le in \cite{Le}.
For the particular system of functions $f_k=z^k$ used above for the construction of the matrix $\Ac$, it turns out that its rank may even be infinite, but the assertion of the theorem still holds, as long as the range of the operator avoids sufficiently many analytical functions.

We will say that the set of indices $\Jc=\{n_j\}\subset\Z_+=\{n\in\Z,n\ge0\}$ is sparse if the series $\sum_{n\in \Jc} (n+1)^{-1}$ converges.

\begin{theorem}\label{3.LeToepl}(\cite{Le})Suppose that  $\m$  is a regular complex Borel measure and  that $\Jc=\{n_j\}\subset\Z_+$ is sparse, $\Jc'=\Z_+\setminus \Jc$. Consider the reduced matrix $\Ac^\Jc$ consisting of $a_{jk}: \ j, k\in\Jc' $. Suppose that the rank $\rb$ of $\Ac^\Jc$ is finite. Then the support of  $\m$ consists of no more than $\rb+1$ points. \end{theorem}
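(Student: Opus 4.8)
The plan is to follow the proof of Theorem~\ref{3.Thm.Luecking}, keeping track of which polynomials survive as admissible test functions, and to feed in the sparseness of $\Jc$ at a single point through a Müntz--Szász density argument. Write $N=\rb+1$, let $\nu:=\s\mu$ (a compactly supported complex Borel measure, with $\supp\nu=\supp\mu$ since $\s>0$), so that $(\Tc_\mu z^{j},z^{k})=\int_\C z^{j}\bar z^{k}\,d\nu$, and set $E(Z):=z_0z_1\cdots z_\rb$ and $V(Z):=\prod_{0\le i<j\le\rb}(z_j-z_i)$ for $Z=(z_0,\dots,z_\rb)\in\C^N$. Since $\rank\Ac^\Jc\le\rb$, every $(\rb+1)\times(\rb+1)$ minor $\det\bigl((\Tc_\mu z^{j_i},z^{k_l})\bigr)_{i,l=0}^{\rb}$ with all $j_i,k_l\in\Jc'$ vanishes; expanding it by multilinearity in both the rows and the columns exactly as in the proof of Lemma~\ref{3.LueLem1} gives
\[
  \int_{\C^{N}}\det\bigl(z_i^{j_p}\bigr)_{i,p=0}^{\rb}\ \overline{\det\bigl(z_i^{k_p}\bigr)_{i,p=0}^{\rb}}\ d\nu^{\otimes N}(Z)=0
\]
for all strictly increasing tuples $j_0<\dots<j_\rb$ and $k_0<\dots<k_\rb$ in $\Jc'$.

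Next I would convert sparseness into ``runs''. Since $\sum_{n\in\Jc}(n+1)^{-1}<\infty$, for every $R$ the set $A_R:=\{a\ge0:\{a,a+1,\dots,a+R\}\subset\Jc'\}$ has complement contained in $\bigcup_{n\in\Jc}[n-R,n]$, hence $\sum_{a\in A_R}(a+1)^{-1}=\infty$. Fix partitions $\lambda,\lambda'$ with at most $N$ parts, put $R:=\rb+\max(\lambda_1,\lambda'_1)$, and for $a\in A_R$ take $j_p:=a+\lambda_{N-p}+p$ and $k_p:=a+\lambda'_{N-p}+p$ ($p=0,\dots,\rb$); these all lie in $\Jc'$, and the Jacobi bialternant formula gives $\det(z_i^{j_p})_{i,p}=E(Z)^a s_\lambda(Z)V(Z)$ and $\det(z_i^{k_p})_{i,p}=E(Z)^a s_{\lambda'}(Z)V(Z)$. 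Thus the identity above becomes
\[
  \int_{\C^{N}}|E(Z)|^{2a}\,s_\lambda(Z)\,\overline{s_{\lambda'}(Z)}\,|V(Z)|^{2}\,d\nu^{\otimes N}(Z)=0\qquad\text{for every }a\in A_R .
\]
Now push the compactly supported complex measure $\omega:=s_\lambda\overline{s_{\lambda'}}|V|^{2}\,\nu^{\otimes N}$ forward under the \emph{real}-valued map $Z\mapsto|E(Z)|^{2}$ onto a compact interval $[0,M]\subset\R$; the previous line says its moments of all orders $a\in A_R$ vanish, so by the Müntz--Szász theorem this push-forward annihilates every $g\in C[0,M]$ with $g(0)=0$, and the choice $g(s)=s$ returns $\int_{\C^N}|E(Z)|^{2}s_\lambda(Z)\overline{s_{\lambda'}(Z)}|V(Z)|^{2}\,d\nu^{\otimes N}=0$. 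Since the Schur polynomials $s_\lambda$ form a linear basis of the symmetric polynomials in $z_0,\dots,z_\rb$, this holds with $s_\lambda,\overline{s_{\lambda'}}$ replaced by $P,\overline Q$ for arbitrary symmetric polynomials $P,Q$. From here the conclusion of the proof of Theorem~\ref{3.Thm.Luecking} carries over word for word: the conjugation-closed algebra generated by the functions $P\overline Q$ is, by the Stone--Weierstrass theorem on $\C^{N}/{\sim}$, dense in the continuous symmetric functions, and the measure $|E|^{2}|V|^{2}\nu^{\otimes N}$ is permutation-invariant, whence $\int_{\C^N}f\,|E|^{2}|V|^{2}\,d\nu^{\otimes N}=0$ for every continuous $f$, i.e.\ $\nu^{\otimes N}$ is carried by the set $\{Z:E(Z)V(Z)=0\}$, the union of the coordinate hyperplanes $\{z_i=0\}$ and the diagonals $\{z_i=z_j\}$. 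If $\supp\nu$ contained $\rb+2$ distinct points, at least $\rb+1$ of them, say $q_0,\dots,q_\rb$, would be nonzero; choosing pairwise disjoint open neighbourhoods $U_i\ni q_i$ with $0\notin\overline{U_i}$, the ``polydisc'' $U_0\times\cdots\times U_\rb$ would be disjoint from $\{EV=0\}$ yet satisfy $|\nu^{\otimes N}|(U_0\times\cdots\times U_\rb)=\prod_i|\nu|(U_i)>0$ --- a contradiction. Hence $\supp\mu=\supp\nu$ consists of at most $\rb+1$ points, and $\mu$ is a combination of at most $\rb+1$ point masses.

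I expect the genuine work to be in the middle two steps: checking that the multilinear expansion behind Lemma~\ref{3.LueLem1} survives the restriction of all exponents to $\Jc'$; the bookkeeping showing that sparseness supplies, for each pair of partitions, a shift-set $A_R$ with divergent reciprocal sum; and recognising that composition with $|E|^{2}$ reduces everything to the classical one-variable, real-line Müntz--Szász theorem rather than to a several-variable density statement (such statements being false in the $C^{l}$-topologies, cf.\ Example~\ref{example}). It also deserves emphasis why the bound is $\rb+1$ and not $\rb$: the argument never produces the bare factor $1$ in front of $|V(Z)|^{2}$, only $|E(Z)|^{2}=|z_0\cdots z_\rb|^{2}$, because a run inside $\Jc'$ may be forced to begin at some $a\ge1$; this extra factor is exactly what pardons the one point $z=0$, which $\Ac^\Jc$ cannot see when $0\in\Jc$.
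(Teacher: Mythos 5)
Your proof is correct, and it takes a genuinely different route from the one the paper uses. The paper does not prove Theorem~\ref{3.LeToepl} directly but derives it as the $d=1$ specialization of Theorem~\ref{4.LeMulti}, whose proof introduces the auxiliary function $\F(w)=\langle F^{\otimes N},\prod\zb_j^{\a_j+\g w}\overline{\det(\zb_j^{\b_k+\g w})}\rangle$, shows it is bounded and holomorphic in a half-plane, and invokes a Blaschke--Jensen zero-counting argument (Rudin, Theorem~15.23) to force $\F\equiv0$; it then falls back on Theorem~\ref{3.Th.Alexroz} (the distributional Luecking theorem) for the endgame. You instead stay close to Luecking's original Stone--Weierstrass strategy: you convert the hypothesis $\sum_{n\in\Jc}(n+1)^{-1}<\infty$ into the existence of arbitrarily long runs in $\Jc'$ starting at a set $A_R$ with divergent reciprocal sum, use the Jacobi bialternant identity to rewrite the admissible determinants as $E(Z)^a s_\lambda(Z)V(Z)$, push the resulting measure forward under the real map $Z\mapsto|E(Z)|^2$, and apply the classical one-variable M\"untz--Sz\'asz theorem on an interval before finishing with Stone--Weierstrass on $\C^N/\!\sim$ exactly as in Theorem~\ref{3.Thm.Luecking}. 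The two approaches are morally linked (M\"untz--Sz\'asz itself can be proved by a Blaschke condition), but structurally distinct. What your version buys: it is self-contained, needs no analytic continuation or the distributional machinery of Section~3, and makes the bound $\rb+1$ completely transparent --- the unavoidable factor $|E(Z)|^2$ exactly pardons the single point $z=0$, which $\Ac^\Jc$ cannot see when $0\in\Jc$, as your example $\mu=\delta_0$, $\Jc\ni0$, $\rb=0$ shows. What it costs: as you correctly note, the reduction to a one-variable M\"untz--Sz\'asz statement and the final Stone--Weierstrass step are precisely the ingredients that do \emph{not} survive the passage from measures to distributions (Remark~\ref{3.Remark} and Example~\ref{example}), so this route proves Theorem~\ref{3.LeToepl} but not the distributional or multidimensional strengthening~\ref{4.LeMulti}, whereas the paper's analytic-continuation argument is built to handle both. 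Minor bookkeeping points that you handled correctly and that are worth keeping explicit in a final write-up: passing from the paper's one-sided identity~\eqref{3.VanishingMod} to the two-determinant form by antisymmetrizing over permutations of the $j_i$; the sign ambiguities in the bialternant disappearing inside $|V(Z)|^2$; and the observation that $|\nu^{\otimes N}|=|\nu|^{\otimes N}$, which justifies the final product-neighbourhood contradiction.
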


The original formulation of this theorem in \cite{Le} is given in  the terms of the Toeplitz operator itself. Denote by $\Ms,\Ns$ the space of polynomials spanned by monomials $z^j$ with $j$, respectively, in $\Jc,\Jc'$. In  Theorem \ref{3.LeToepl} it is supposed that the operator $\Tc_\m$, being restricted to $\overline{\Ns}$, has range in the linear span of $\overline{\Ms}$ and some finite-dimensional subspace in $\Ns$.

In the next section we will establish   a   result that generalizes Theorem \ref{3.LeToepl} in three directions: the multidimensional case will be considered, any distribution with compact support will replace the measure $\m$, and the condition of sparseness will be considerably relaxed.

\section{The multidimensional case}
In this Section we extend our main Theorem \ref{3.Th.Alexroz} to the case of Toeplitz operators in Bergman
spaces of analytical functions of several variables. For the case of a measure acting as weight,
there exist two ways  of proving this result, in \cite{BRChoe1} and \cite{RShir}, \cite{AlexRoz}. The first approach
generalizes the one used in \cite{Lue2} in proving Theorem \ref{3.Thm.Luecking}, the other one uses the induction on dimension. As it
follows from Remark \ref{3.Remark}, for the case of distribution the approach of \cite{BRChoe1}
is likely to meet some complications. We present here the proof given in \cite{AlexRoz}, with some  modifications.
\begin{theorem}\label{4.Thm.Dim d} Let $F$ be a distribution in $\Ec'(\C^d)$. Consider the matrix
\begin{equation}\label{4.MatrtixDimd}
    \Ac(F)=(a_{\a\b})_{\a,\b\in \Z_+^d}; \ a_{\a\b}=\langle F, \zb^\a\bar{\zb}^\b\rangle, \ \zb=(z_1,\dots,z_d)\in \C^d.
\end{equation}
Suppose that the matrix $\Ac(F)$ has finite rank $\rb$. Then $\card\supp F\le \rb$ and $F=\sum L_q
\d(\zb-\zb_q)$, where $L_q$ are differential operators and $\zb_q$, $1\le q\le \rb$, are some points in $\C^d$.
\end{theorem}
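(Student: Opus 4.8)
The plan is to prove Theorem \ref{4.Thm.Dim d} by induction on the dimension $d$, reducing the $d$-dimensional case to the $(d-1)$-dimensional one, exactly in the spirit of the proof of Theorem \ref{3.Th.Alexroz} (which handles the base case $d=1$). The main new ingredient will be a version of Lemma \ref{Lem.compsup} adapted to one of the complex variables, say $z_d$, with the remaining variables $\zb'=(z_1,\dots,z_{d-1})$ treated as parameters. First I would observe, just as in the one-dimensional argument, that the finiteness of $\rank \Ac(F)$ forces a linear dependence among columns: fixing the multi-index $\b_d$-component, among the $\rb+1$ ``columns'' indexed by $\b_d=0,1,\dots,\rb$ (with $\b'$ fixed, or better with $\bar\zb'$-monomials also present) there is a polynomial $h_1(\bar z_d)$ of degree $\le\rb$ with
\begin{equation*}
    \langle F,\ \zb^\a\,\bar\zb'^{\,\b'}\,h_1(\bar z_d)\rangle=0\qquad\text{for all }\a,\ \b'.
\end{equation*}
This says that the distribution $h_1(\bar z_d)F$ is orthogonal to all functions of the form $\zb^\a\bar\zb'^{\,\b'}$, i.e. to all polynomials holomorphic in $z_d$ and arbitrary polynomials in the other variables and in $\bar\zb'$. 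A partial-$\bar z_d$ version of Lemma \ref{Lem.compsup} — proved the same way, convolving in $z_d$ with $\frac1{\pi z_d}$ and using compactness of support together with polynomial decay of the $z_d$-expansion uniformly in $\zb'$ — then yields a compactly supported $F^{(1)}$ with $\partial F^{(1)}/\partial\bar z_d = h_1(\bar z_d)F$, less singular by one Sobolev degree. As in \eqref{newmatrix}, integrating by parts in $\bar z_d$ shows $\rank\Ac(F^{(1)})\le\rank\Ac(F)$.

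Iterating this $n=[-s]+1$ times (where $F\in H^s$) produces a compactly supported $F^{(n)}$ that is locally $L_2$ with $\rank\Ac(F^{(n)})<\infty$ and, moreover, is the $\bar z_d$-antiderivative structure applied $n$ times — in particular $\partial F^{(n)}/\partial\bar z_d = h_n(\bar z_d)F^{(n-1)}$. Here I would need the base case in the $z_d$-direction: a compactly supported distribution $G(\zb',z_d)$ which is an $L_2$ (or smooth) function, whose matrix $\Ac(G)$ has finite rank, and which satisfies the chain of relations above, must have $z_d$-support, for each fixed $\zb'$, contained in a finite set — this is where I invoke Luecking's Theorem \ref{3.Thm.Luecking}, or rather its local version, in the single variable $z_d$ with $\zb'$ as a parameter. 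Tracing back through the $h_j(\bar z_d)$ as in the one-dimensional proof, one concludes that $\supp F$ is contained in a set which, over each $\zb'$, is finite; combined with compactness this gives that $\supp F$ projects to a compact set in $\C^{d-1}$ and fibers finitely. Now I would apply the induction hypothesis in the $\zb'$ variables: restricting/slicing $F$ appropriately (or testing against $z_d^k\bar z_d^l$ to kill the $z_d$-direction and obtain genuinely $(d-1)$-dimensional distributions whose matrices have rank $\le\rb$), one gets that the $\zb'$-projection of $\supp F$ is also finite. Hence $\supp F$ is a finite set of points $\zb_1,\dots,\zb_Q$ in $\C^d$, so $F=\sum_q L_q\delta(\zb-\zb_q)$ with differential operators $L_q$.

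Finally, to get $Q\le\rb$, I would repeat the interpolation argument from the end of the proof of Theorem \ref{3.Th.Alexroz} verbatim in several variables: for each $q$ construct a holomorphic polynomial $f_q(\zb)$ vanishing to high order (higher than $\operatorname{ord}L_{q'}$) at every $\zb_{q'}$ with $q'\ne q$, while $L_q(-D)|f_q|^2\ne 0$ at $\zb_q$ — such polynomials exist by standard multivariate Hermite interpolation. Then the matrix with entries $\langle F, f_q\overline{f_{q'}}\rangle$ is diagonal with nonzero diagonal entries, so its size $Q$ is at most $\rank\Ac(F)=\rb$.

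\medskip\noindent\textbf{Main obstacle.} The delicate point is the partial version of Lemma \ref{Lem.compsup}: one must check that ``orthogonality to all $z_d$-holomorphic polynomials (with arbitrary dependence on $\zb'$, $\bar\zb'$)'' really produces a $\bar z_d$-primitive that is still \emph{compactly supported in all variables}, not merely in $z_d$. Convolving only in $z_d$ with $\frac{1}{\pi z_d}$ preserves $\zb'$-support automatically, but one needs the vanishing of the resulting function for large $|z_d|$, which comes from the Laurent-type expansion $G=\pi^{-1}\sum_k z_d^{-k-1}\langle F(\cdot,\bar z_d\text{-slot}),\,w_d^k\rangle$ — and this requires the orthogonality relations to hold after multiplying $F$ by arbitrary test functions of $\zb'$, which is exactly what the column-dependence relation \eqref{ColPolyn}-analogue supplies. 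Making the ellipticity/regularity bookkeeping (that $F^{(j)}\in H^{s+j}$ uniformly, and that only finitely many iterations are needed) work cleanly in the presence of the parameter $\zb'$ is the part that needs care; everything else is a faithful transcription of the one-dimensional argument combined with the dimensional induction.
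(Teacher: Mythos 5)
Your plan diverges sharply from the paper's proof and contains two gaps I do not see how to close. The central one is the very first step: you claim that $\rank\Ac(F)\le\rb$ yields a \emph{single} polynomial $h_1(\bar z_d)$ with $\langle F,\,\zb^\a\,\overline{\zb'^{\b'}}\,h_1(\bar z_d)\rangle=0$ for \emph{all} $\a\in\Z_+^d$ and \emph{all} $\b'\in\Z_+^{d-1}$. Finite rank only gives, for each \emph{fixed} $\b'$, a linear dependence among the $\rb+1$ columns indexed by $(\b',0),\dots,(\b',\rb)$, hence a polynomial $h_1^{(\b')}(\bar z_d)$ doing the job \emph{for that} $\b'$; there is no reason the coefficients of the dependence are the same across different $\b'$. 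A $\b'$-independent $h_1$ would require a common null vector of infinitely many $\rb\times(\rb+1)$ systems, which does not follow from the rank hypothesis alone. (It does hold \emph{a posteriori}, once one knows $F$ is a finite sum of derivatives of point masses and can take $h_1$ vanishing at each $\bar z_{q,d}$, but that is precisely the conclusion being sought.) Without such an $h_1$ the partial $\bar\partial$-equation in $z_d$ cannot even be set up, so everything downstream is moot.

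The second gap is the regularity bootstrap. The operator $\partial/\partial\bar z_d$ is elliptic as an operator in the single complex variable $z_d$, but \emph{not} in $\C^d$ for $d\ge2$: its symbol vanishes on the entire set $\{\x_d=\y_d=0\}$. Thus from $\partial F^{(1)}/\partial\bar z_d=h_1(\bar z_d)F$ with $F\in H^s(\C^d)$ one cannot conclude $F^{(1)}\in H^{s+1}(\C^d)$; smoothing occurs only in $z_d$, while the singularity in $\zb'$ is untouched. This is more than bookkeeping --- iterating will never deliver an honest $L_2$ function on $\C^d$, so the hand-off to Luecking's theorem cannot be made. The paper's actual argument avoids both difficulties entirely: it first passes (trivially) to the stronger hypothesis of Theorem \ref{4.TheorDimDmore}, that $\Ac(|g|^2F)$ has rank $\le\rb$ for every $g$ holomorphic near $\supp F$; then for each such $g$ it pushes $|g|^2F$ forward under the coordinate projection $\zb=(z_1,\zb')\mapsto\zb'$, applies the $(d-1)$-dimensional induction hypothesis to the pushforward, and exploits a perturbation-in-$g$ argument ($g\rightsquigarrow1+tg$) with a cutoff $\p$ to force $\langle\p F,|g|^2\rangle=0$ and hence $\p F=0$; this pins $\supp F$ onto a finite union of slices $\zb'=\z_q$, and intersecting over two complementary coordinate splittings yields finitely many points. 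No one-variable $\bar\partial$-reduction and no Sobolev gain appear, so the obstructions in your outline simply do not arise.
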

We notice first, following \cite{RShir}, that if    the  function   $g$ is   analytical and bounded
in some polydisk neighborhood of $\supp F$  and
$F_{g}$ is the distribution $|g|^2F$ then
$\rank\Ac({F_g}) \le \rank \Ac(F)$.
To  show    this,   we denote by $M_g$ the bounded operator acting in $\Bc^2$ by multiplication by $g$.
The adjoint operator $M_g^*$ is, of course, bounded as well. Consider the quadratic form
of the operator $\Tc_{F_g}$: for $u\in \Bc^2$,
\begin{gather*}
    (\Tc_{F_g}u,u)=\langle F,gu \bar g\bar u\rangle=(\Tc_F(gu),gu)=(\Tc_F M_g u, M_g u)=(M_g^*\Tc_F M_g u,u).
\end{gather*}
So we see that the operator $u\mapsto \Tc_{F_g}u$ coincides with $M_g^*\Tc_F M_g$. The multiplication
by bounded operators does not increase the rank of an operator, and  the property follows.

 Thus, it is sufficient to prove the statement that is, actually, only formally
 weaker  than Theorem \ref{4.Thm.Dim d}.
 \begin{theorem}\label{4.TheorDimDmore} Suppose that for any function $g(\zb)$, analytic and bounded in a polydisk
 neighborhood of the
 support of the distribution $F$,  the conditions of Theorem  \ref{4.Thm.Dim d} are
 fulfilled with the distribution $F$ replaced  by $|g(\zb)|^2F\equiv F_g$. Then $\card\supp F\le \rb$ and
  $F=\sum_{1\le q\le \rb} L_q
\d(\zb-\zb_q)$, where $L_q$ are differential operators.\end{theorem}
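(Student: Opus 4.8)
The plan is to prove Theorem~\ref{4.TheorDimDmore} by induction on the dimension $d$. The base case $d=1$ is Theorem~\ref{3.Th.Alexroz}: the hypothesis of Theorem~\ref{4.TheorDimDmore} taken with $g\equiv1$ says that $\rank\Ac(F)\le\rb$, which is exactly what Theorem~\ref{3.Th.Alexroz} requires. So fix $d\ge2$ and assume the statement in dimension $d-1$. I would first record the usual reduction: it suffices to show that $\supp F$ is a \emph{finite} set. Granting that, $F$ is automatically a finite combination $\sum_q L_q\d(\zb-\zb_q)$ of $\d$-distributions and their derivatives at the points of its support, and the bound $\card\supp F\le\rb$ is obtained by the interpolation argument at the end of the proof of Theorem~\ref{3.Th.Alexroz}: for each support point $\zb_q$ choose a polynomial $f_q$ vanishing at every $\zb_{q'}$, $q'\ne q$, to order larger than $\operatorname{ord}L_{q'}$, and with $L_q(-D)|f_q|^2(\zb_q)\ne0$; then $(\langle F,f_q\overline{f_{q'}}\rangle)_{q,q'}$ is diagonal with non-zero diagonal, and since it is a finite submatrix of a matrix obtained from $\Ac(F)$ by polynomial row and column combinations, its rank is at most $\rank\Ac(F)\le\rb$, so the number of support points is $\le\rb$.

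For the inductive step write $\zb=(z',z_d)$ with $z'\in\C^{d-1}$ and let $\pi\colon\C^d\to\C^{d-1}$, $\pi(z',z_d)=z'$. Given a one-variable polynomial $p(z_d)$, I would consider the pushed-forward distribution $\pi_*\blp|p(z_d)|^2F\brp\in\Ec'(\C^{d-1})$ and check that it satisfies the dimension-$(d-1)$ hypothesis: for $g'(z')$ analytic and bounded on a polydisk neighbourhood of its support one has $|g'(z')|^2\pi_*\blp|p(z_d)|^2F\brp=\pi_*\blp|g'(z')p(z_d)|^2F\brp$, and $g'(z')p(z_d)$ is analytic and bounded on a product polydisk neighbourhood of $\supp F$; hence by the hypothesis of Theorem~\ref{4.TheorDimDmore} the matrix $\Ac\blp|g'p|^2F\brp$ in $\C^d$ has rank $\le\rb$, and $\Ac\blp|g'|^2\pi_*(|p|^2F)\brp$ in $\C^{d-1}$ is precisely its subarray indexed by $\{\a_d=\b_d=0\}$, so also has rank $\le\rb$. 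The induction hypothesis then yields that $\pi_*\blp|p(z_d)|^2F\brp$ is a combination of at most $\rb$ derivatives of $\d$'s; in particular its support is finite, for \emph{every} polynomial $p$, and likewise with $z_d$ replaced by any other coordinate $z_j$.

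It remains to pass from the finiteness of all these twisted pushforward supports to the finiteness of $\supp F$ itself, and this is where I expect the real difficulty to be. The obstruction is cancellation under a pushforward --- already $\pi_*\blp\pd_{z_d}\d\brp=0$ --- so that $\supp\pi_*(|p|^2F)$ being finite need not bound $\pi(\supp F)$. The way around it is to use the freedom in $p$: taking $p$ with a zero of high order $M$ at a prescribed point $b\in\C$ turns such cancellations into genuine contributions, since $\pd_{z_d}|z_d-b|^{2M}=M(z_d-b)^{M-1}(\bar z_d-\bar b)^{M}$ is non-zero off $z_d=b$; varying $b$ and, if $\supp F$ has non-trivial $z_d$-extent over points of $\pi(\supp F)$, supplementing this with a one-variable reduction in $z_d$ --- a partial Cauchy transform in the spirit of Lemma~\ref{Lem.compsup} and of the order reduction in the proof of Theorem~\ref{3.Th.Alexroz}, which does not increase the rank of the associated matrix --- one should get that $F$ is supported over finitely many slices $\{z'=\z_i\}$, i.e.\ that $\pi(\supp F)$ is finite. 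Doing this for all $d$ coordinate projections gives $\supp F\subset\prod_{j=1}^{d}\rho_j(\supp F)$, a finite set, where $\rho_j(\zb)=z_j$; together with the reduction of the first paragraph this finishes the proof. The delicate step is exactly this "no cancellation after a suitable holomorphic twist, plus one-variable order reduction"; the rest is bookkeeping.
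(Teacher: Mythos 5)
Your overall architecture --- induction on dimension, pushing $F_g$ forward to $\C^{d-1}$, verifying the dimension-$(d-1)$ hypothesis so the inductive hypothesis gives finite support for every pushforward, finishing with the interpolation argument for the bound $\le\rb$ --- matches the paper's strategy, and the verification that $\pi_*\bigl(|p(z_d)|^2F\bigr)$ inherits the hypothesis is correct. But the step you yourself flag as ``the real difficulty'' is in fact the entire content of the inductive step, and it is not carried out.

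Knowing that each $\pi_*\bigl(|p|^2F\bigr)$ has support of cardinality $\le\rb$ does not by itself bound $\pi(\supp F)$: as $p$ varies, those finite supports could a priori sweep out an infinite set. The ``high-order zero at $b$, then vary $b$'' idea does not obviously resolve this, and the proposed ``one-variable order reduction in $z_d$'' is not made precise. The paper closes exactly this gap with a maximality-plus-continuity argument: pick $g_0$ for which $\rb(g)=\card\supp G(g)$ is maximal (normalised to $g_0=1$); fix disjoint $\e$-neighbourhoods of the points of $\supp G(1)$; for an arbitrary analytic $g$ and small $|t|$, each of these neighbourhoods must still contain a support point of $G(1+tg)$ (continuity), and since the total is capped at $\rb_0$ there can be no others; hence for $\psi$ vanishing on those neighbourhoods $\psi G(1+tg)=0$, which after expanding $|1+tg|^2=1+2t\re g+t^2|g|^2$ and letting $t$ vary gives $\langle\psi F,|g|^2\rangle=0$; polarization and a $C^l$-density argument then force $\psi F=0$, so $\supp F$ lies over $\supp G(1)$. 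Repeating with the complementary coordinate split $\zb=(\zb'',z_d)$ and intersecting is then the bookkeeping you anticipated. Without some substitute for that stability/maximality argument, your sketch does not constitute a proof.
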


\begin{proof}
 We use the induction on $d$.
 For $d=1$ the statement of Theorem \ref{4.TheorDimDmore} coincides with the one of
Theorem \ref{3.Th.Alexroz} that was proved in Sect. \ref{SectionLuecking}. We suppose that we have established our
statement in the    complex dimension $d-1$ and consider the $d$-dimensional case. We denote the variables as
$\zb=(z_1,\zb'), \ \zb'\in\C^{d-1}$.

 For a fixed function $g(\zb)$ we denote by $G(g)=\pi_*F_g$ the distribution in $\Ec'(\C^{d-1})$ induced from $F_g$
  by the projection $\pi:\zb\mapsto
 \zb'$: for $u\in C^\infty(\C^{d-1})$,
 \begin{equation}\label{4.proj}
    \langle G(g), u\rangle = \langle F_g, 1_{\C^1}\otimes u\rangle.
 \end{equation}
Although the function $g$ is defined only in a
polydisk, the distribution \eqref{4.proj} is well
defined since this polydisk contains $\supp F$.

Consider the submatrix $\Ac'({F_g})$ in the matrix $\Ac({F_g})$ consisting only of those
$a_{\a\b}=\langle |g|^2 F, \zb^\a\overline{\zb}^\b\rangle$  for which $\a_1=\b_1=0$. It follows from
\eqref{4.proj}, that the matrix $\Ac'({F_g})$ coincides with the matrix $\Ac({G(g)})$ constructed for the
distribution $G(g)$ in dimension $d-1$. Thus, the matrix $\Ac(G(g))$, being a submatrix of a finite
rank matrix, has a finite rank itself, moreover, $\rank\Ac({G(g)})\le \rb$. By the inductive assumption,
this implies that the distribution $G(g)$ has  finite support consisting of $\rb(g)\le \rb$ points
$\z_1(g),\dots,\z_{\rb(g)}$;  $\z_q(g)\in \C^{d-1}$ (the notation reflects the fact that both the points
and their quantity may depend on the function $g$). Among all functions $g$, we can find the one,
$g=g_0$, for which $\rb(g)$ attains its maximal value $\rb_0\le \rb$. Without losing in generality, we can
assume that $g_0=1$.

Fix an $\e>0$, sufficiently small, so that
$2\e$-neighborhoods of $\z_q(1)$ are disjoint, and
consider the functions $\varphi_q(\zb')\in
C^{\infty}(\C^{d-1})$, $q=1,\dots, \rb_0$ such that $\supp
\varphi_q $ lies in the $\e$-neighborhood of the point
$\z_q(1)$ and $\varphi_q(\zb')=1$ in the
$\frac\e2$-neighborhood of $\z_q(1)$.  We fix an
analytic function $g(\zb)$ and consider for any $q$ the
distribution $\F_q(t,g)\in \Ec'(\C^d),$
$\F_q(t,g)=|1+tg(\zb)|^2
\varphi_q(\zb')F=\varphi_q(\zb')F_{1+tg}$. For $t=0$,
$\F_q(t,g)=\varphi_q(\zb')F$,  the point $\z_q(1)$
belongs to the support of $\pi_* \F_q(0,g)$, and
therefore for some function $u\in C^\infty(\C^{d-1})$,
$\langle\pi_* \F_q(0,g),u\rangle\ne0$. By continuity,
for $|t|$ small enough, we still have $\langle\pi_*
\F_q(t,g),u\rangle\ne0$, which means that the
$\e$-neighborhood of the point $\z_q(1)$ contains at
least one point in the support of the distribution
$G(1+tg)$.  Altogether, we have not less than $\rb_0$
points of the support of $G(1+tg)$ in the union of
$\e$-neighborhoods of the points $\z_j(1)$. However,
recall, the support of $G(1+tg)$ can never contain
more than $\rb_0$ points, so we deduce that for $t$
small enough, there are no points of the support of
$G(1+tg)$ outside the $\e$-neighborhoods of the points
$\z_q(1)$,  for $|t|$ small enough (depending on
$g$.) Thus the support of the distribution $G(1+tg)$ is contained in the    $\e$-neighborhood   of  the  set of points $\z_j(1)$ for any $g$.

 Now we introduce a function $\p\in C^\infty(\C^{d-1})$ that  equals $1$ outside
$2\e$-neighborhoods of the points $\z_q(1)$ and vanishes in $\e$-neighborhoods of these points. By the above resoning, the distribution $\p G(1+tg)$ equals zero for any $g$, for $t$ small enough. In
particular, applying this distribution to the function $u=1$, we obtain
\begin{equation}\label{4.Ftg}
\langle\p G(1+tg),1\rangle=\langle\p F,|1+tg|^2\rangle=\langle \p F, 1+2t\re g+ t^2|g|^2\rangle=0.
\end{equation}
By the arbitrariness of $t$ in a small interval, \eqref{4.Ftg} implies that $\langle \p F,
|g|^2\rangle=0$ for any $g$. By standard polarization, this implies that for any  functions $g_1,g_2$
analytical in a polydisk neighborhood of $\supp F$.
    \begin{equation}\label{4.Fgg}
\langle\p F, g_1\overline{g_2}
    \rangle=0.
    \end{equation}

    Any  polynomial $p(\zb,\bar{\zb})$ can be represented as a linear combination of functions
    of the form $g_1\overline{g_2}$, so, \eqref{4.Fgg} gives
    \begin{equation}\label{4.FP}
\langle\p F, p(\zb,\bar{\zb})
    \rangle=0.
    \end{equation}
Now  we take any function $f\in C^\infty(\C^d)$ supported in the neighborhood $U$ of $\supp F$ such
that $f=0$  on the support of $\p$. We can approximate $f$
 by polynomials of the form $p(\zb,\bar{\zb})$ uniformly on $\overline{U}$ in the sense of $C^l$, where
 $l$ is the order of the distribution $F$. Passing to the limit in \eqref{4.FP}, we obtain $\langle\p F,
 f \rangle=\langle F,
 f \rangle=0.$

 The latter relation shows that $\supp F\subset \bigcup_q\{\zb: |\zb'-\z_q(1)|<2\e\}$. Since $\e>0$ is arbitrary,
 this  implies that  $\supp F$
lies in the union of affine subspaces $\zb'=\z_j$, $j=1,\dots,\rb_0$ of complex dimension $1$.

 Now we
repeat the same reasoning having chosen instead of $\zb=(z_1,\zb')$ another decomposition of the complex
variable $\zb$: $\zb=(\zb'',z_d)$. We obtain that for some points $\x_k\in \C^{d-1}$, no more than $\rb$ of
them, the support of $F$ lies in the union of subspaces $\zb''=\x_k$. Taken together, this means that,
actually, $\supp F$ lies in the intersection of these two systems of subspaces, which consists of no
more than $\rb^2$ points $\zb_s$. The number of points is finally reduced to $\rb_0\le \rb$ in the same way
as in Theorem\ref{3.Th.Alexroz}, by choosing a special system of interpolation functions.
\end{proof}

The theorem just proved can be extended to the case of a    sparse range, following the pattern of Theorem \ref{3.LeToepl}.

\begin{definition}\label{4.Definsparse}Let $\Jc\subset\Z_+^d$ be a set of multiindices and $\g\in\Z_+^d$ be a fixed multiindex. We say that the set $\Jc$ is $N$- sparse in the   direction $\g$ if for any $\a\in\Z_+^d$,
\begin{equation}\label{4.SparseDef}
    \limsup_{n\to\infty} n^{-1}\#\{(\a+([0,n]\g))\cap \Jc\}<N^{-1}.
\end{equation}
\end{definition}
In other words, the fact that the set $\Jc$ is sparse in direction $\g$ means that along any half-line starting at some point of $\Z_+^d$ and going in the direction $\g$, the density of the points of $\Jc$ on this line is less than $N^{-1}$.

For a multiindex $\g=(k_1,\dots,k_d)$ we denote by $\nb(\g)$ the set of indices $j$ such that $k_j=0$. We introduce the set $\Jc'=\Z_+^d\setminus \Jc$ and the reduced matrix $\Ac^\Jc(F)$ consisting of $a_{\a,\b}: \ \a, \b\in\Jc' .$

\begin{theorem}\label{4.LeMulti}Suppose that for a distribution $F\in\Ec'(\C^d)$  the reduced matrix $\Ac^\Jc(F)$ has finite rank $\rb$, and for some $\e>0$, the set $\Jc$ is $2\rb+2+\e$-sparse in some directions $\g_l$, $l=1,\dots,\lb$ such that
\begin{equation}\label{4.direct}
    \cup \nb(\g_l)=\{1,\dots, d\}.
\end{equation}
Then the distribution $F$ has finite support consisting of no more than $\rb+1$ points. In the particular case when $F$ is a function, the condition \eqref{4.direct} can be dropped and if $\Jc$ is $2\rb+2+\e$-sparse just in one, arbitrary, direction then $F=0$.
\end{theorem}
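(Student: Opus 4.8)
The plan is to transplant three machines that are already developed in the preceding sections, running each of them with the sole modification that every monomial allowed to appear as a factor must have all of its $z$- and $\bar z$-exponents inside $\Jc'$: Luecking's antisymmetrization argument (the proof of Theorem \ref{3.Thm.Luecking}), the $\bar\partial^{-1}$-reduction of a distribution to a function (the proof of Theorem \ref{3.Th.Alexroz}, via Lemma \ref{Lem.compsup}), and the dimension induction together with the interpolation device that bounds the number of points (the proof of Theorem \ref{4.Thm.Dim d}). The sparseness hypothesis is used only through one elementary combinatorial fact, which I would record first: if $\Jc$ is $(2\rb+2+\e)$-sparse in a direction $\g$, then along every half-line $\{\a+s\g:s\in\Z_+\}$, $\a\in\Z_+^d$, the complement $\Jc'=\Z_+^d\setminus\Jc$ contains runs of at least $2\rb+2$ consecutive points, and such runs recur with bounded gaps, since a set of upper density strictly below $(2\rb+2)^{-1}$ along the line cannot meet every window of that length.

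First I would treat $d=1$ with $F$ a function and $\Jc$ sparse in a single direction, the conclusion being $F=0$; by Proposition \ref{2.Prop.Matrix} and the domain-independence of the matrix one may argue with $\Ac^\Jc(F)$ in the analytic Bergman space. Rerun Luecking's proof verbatim, but insisting that all inserted monomials $z^{j_i},z^{k_i}$ have $j_i,k_i\in\Jc'$: the vanishing \eqref{3.VanishingMod} of $\f^{\otimes N}$ (with $N=\rb+1$ and $\f(h)=\int h\,dF$) still holds for such monomials, and forming linear combinations and factoring out the Vandermonde polynomial as there yields $\f^{\otimes N}\bigl(P_1(Z)\overline{Q_1(Z)}\,|V(Z)|^2\bigr)=0$ for all symmetric $P_1,Q_1$ whose product with $|V|^2$ still has every coordinate exponent in $\Jc'$. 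Because $|V|^2$ carries each variable to degree at most $2\rb$ in $z_i$ and in $\bar z_i$, the $\Jc'$-runs of length $\ge 2\rb+2$ along $\g$ — after translating, if necessary, by a power of $z_0z_1\cdots z_\rb$ — supply enough admissible $P_1,Q_1$ to separate the permutation classes on $\C^N/\!\sim$; the Stone--Weierstrass step of Theorem \ref{3.Thm.Luecking} then gives $\int f\,|V|^2\,dF^{\otimes N}=0$ for all continuous symmetric $f$, hence for all continuous $f$, so $|V|^2\,dF^{\otimes N}=0$, and $F$ being a function forces $F=0$. This is exactly the role of the quantitative bound $2\rb+2$.

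For a general distribution one would keep $d=1$ first. Since $F\in H^s$ for some $s$, the case $s\ge 0$ is the previous step. If $s<0$, finite rank of $\Ac^\Jc(F)$ produces, among the $\rb+1$ columns indexed by a sufficiently long $\Jc'$-run, a polynomial $h_1(\bar z)$ of degree $\le\rb$ with $\langle h_1(\bar z)F,z^k\rangle=0$ for all $k\in\Jc'$, so by Lemma \ref{Lem.compsup} there is a less singular $F^{(1)}$ with $\bar\partial F^{(1)}=h_1F$; using the runs one checks that $\Ac^\Jc(F^{(1)})$ still has rank controlled by $\rb$ (absorbing the shift in $\bar z$-exponents into the $\e$-margin and, if needed, passing to a subfamily of the available directions), and the computation \eqref{newmatrix} is repeated. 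Iterating $[-s]+1$ times lands at a function of finite reduced rank, hence zero; unwinding the chain $\bar\partial F^{(q)}=h_q(\bar z)F^{(q-1)}$ confines $\supp F$ to a finite union of zero-sets of the $h_q$, so $F$ is a finite combination of $\delta$'s and their derivatives, and the interpolation polynomials of Theorem \ref{3.Th.Alexroz} cap the number of points by $\rb+1$. The passage to $\C^d$ is the induction on $d$ of Theorem \ref{4.Thm.Dim d}: to deal with the $j$-th coordinate, choose via $\bigcup_l\nb(\g_l)=\{1,\dots,d\}$ a direction $\g_l$ with $j\in\nb(\g_l)$, so that the $\Jc'$-runs in that direction are insensitive to $z_j$ and descend to the $\a_j=0$ slice of $\Jc$; project $|g|^2F$ along $z_j$, identify the $\a_j=\b_j=0$ block of the reduced matrix with the $(d-1)$-dimensional reduced matrix for the sliced $\Jc$, apply the inductive hypothesis to place $\supp F$ in finitely many hyperplanes $z_j=\mathrm{const}$, intersect over all $j$, and reduce to $\rb+1$ points as before. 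When $F$ is a function no $\bar\partial^{-1}$-step is needed and one direction already forces $F=0$, which is why the covering condition can be dropped there.

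The step I expect to be the main obstacle is the density statement in the function case: once exponents are confined to $\Jc'$, the family of symmetric expressions $P_1(Z)\overline{Q_1(Z)}$ that may be fed into $\f^{\otimes N}$ is a proper subalgebra, and one must show it still separates the permutation classes and is uniformly dense on compacts in the symmetric continuous functions — a Müntz--Stone--Weierstrass hybrid in several complex variables whose validity is precisely what the choice of the constant $2\rb+2$ (rather than a larger one) is meant to guarantee through the existence of runs of that length. A second delicate point is the propagation of the reduced-rank bound together with the sparseness and covering conditions through the $\bar\partial^{-1}$-steps and the coordinate projections: one must organize the induction so that the $\nb$-covering hypothesis — rather than a stronger pairwise covering — is enough, and arrange that the $\Jc'$-runs used for column selection, for the $\bar\partial^{-1}$-shifts, and for the projection can be taken compatibly; this is where the covering hypothesis is genuinely spent.
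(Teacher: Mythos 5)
Your plan is not the paper's plan, and the point at which you yourself flag the main obstacle is exactly where the argument is not a detail but the whole content. The paper does not rerun Luecking's Stone--Weierstrass argument with $\Jc'$-restricted monomials, nor does it reduce to the one-dimensional $\bar\partial^{-1}$ machine directly on the sparse matrix. Instead, after observing that finite rank of $\Ac^{\Jc}(F)$ gives the vanishing \eqref{4.VanishingMod} whenever all shifted exponents $\a_j+\g n$, $\b_j+\g n$ lie in $\Jc'$, the paper introduces a \emph{complex} parameter $w$ in the exponent, forming
\[
\F(w)=\bigl\langle F^{\otimes N},\ \textstyle\prod_j \zb_j^{\a_j}\,\overline{\det(\zb_j^{\b_k})}\ \bigl|\textstyle\prod_j \zb_j^{\g}\bigr|^{2Nw}\bigr\rangle,
\]
shows that $R^{-Nw}\F(w)$ is a bounded analytic function on a half-plane, and notes that it vanishes at all integers $n$ in the set $\Zs=\Z_+^d\setminus\bigl(\bigcup_j(\Jc-\a_j)\cup\bigcup_j(\Jc-\b_j)\bigr)$. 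The sparseness constant $2\rb+2+\e$ is chosen precisely so that removing $2N=2\rb+2$ translates of $\Jc$ still leaves $\Zs$ with positive lower density along the half-line, hence $\sum_{n:\g n\in\Zs}(n+1)^{-1}=\infty$, and a Blaschke/M\"untz uniqueness theorem (Rudin's corollary to Theorem 15.23) forces $\F\equiv0$. Evaluating at $w=\k$ (the order of $F$) gives \eqref{4.VanishingMod} for the distribution $|\zb^\g|^{2\k}F$, to which Theorem \ref{4.Thm.Dim d} applies; the covering condition \eqref{4.direct} then enters only to let one intersect the exceptional sets $\{\zb^{\g_l}=0\}$ down to the origin.

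Your proposal misses this analytic-continuation/Blaschke device entirely, and the substitute you offer does not close. First, the combinatorial gloss is wrong in two ways: sparseness of upper density $<(2\rb+2)^{-1}$ does not give runs of length $2\rb+2$ in $\Jc'$ recurring with \emph{bounded} gaps (the complement can contain arbitrarily long blocks of $\Jc$), and in any case the paper never uses runs at all — it uses the divergence of $\sum 1/(n+1)$ over $\Zs$, a different and weaker condition. Second, the route through Stone--Weierstrass for a distributional weight is exactly what Remark \ref{3.Remark} and Example \ref{example} in the paper show to be hopeless: the needed density in the $C^l$-topology is \emph{false}. Third, even in the function case, you do not prove and cannot simply assert the ``M\"untz--Stone--Weierstrass hybrid'' separation and density for the $\Jc'$-restricted symmetric algebra; that claim is the theorem in disguise and is not a consequence of having some long runs. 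In short, the proposal identifies the right obstacle but lacks the one idea — passing to a one-parameter analytic family in the exponent and invoking a Blaschke-type uniqueness theorem — that actually overcomes it.
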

\begin{remark}For the case of measures, a version of Theorem \ref{4.LeMulti} with a more restrictive  notion of sparseness,  has been proved in \cite{Le1}.
\end{remark}
\begin{remark}The condition \eqref{4.direct} is sharp in the sense that if it is violated, the support of $F$ can be infinite.
 Consider the set $\Jc$ consisting of multiindices having zero in the first position.
 This set $\Jc$ is $N$-sparse for any $N$ in any direction
 $\g_l$, with a non-zero in the first position. For such directions $\g_l$, we have $1\not\in{\cup \nb(\g_l)}$.
  Let the distribution $F\in\Ec'(\C^d)$ be a measure supported in the subspace $\{z_1=0\}$.
  Then for any $\a,\b\not\in\Jc$, the functions $\zb^\a,\overline{\zb^\b}$ vanish on $\{z_1=0\}$ and thus $\langle F, \zb^\a \overline{\zb^\b}\rangle=0,$ so the statement on the finiteness of support becomes wrong. Of course, due to the second part of Theorem \ref{4.LeMulti}, such examples are impossible in the case when the distribution is, in fact, a function.
\end{remark}
\begin{proof}The proof follows the structure of the proof of Theorem \ref{3.LeToepl} in \cite{Le}, with two main ingredients replaced by their multi-dimensional analogies and with  the extension to distributions and a    more general notion of sparse sets.

First, similar to Lemma \ref{3.LueLem1}, the following two properties are equivalent:
\begin{enumerate}\item
 the matrix $\Ac(F)$ has finite rank not greater than $\rb$;
 \item for any collections of $2N=2\rb+2$ multiindices $\a_0,\dots,\a_\rb$, $\b_0,\dots,\b_\rb$
     \begin{equation}\label{4.VanishingMod}
     \textstyle
     \f^{\otimes N}\blp \prod_{i\in(0,\rb)} \zb_i^{\a_i} \det \overline{\zb_i}^{\b_l} \brp = 0.
     \end{equation}
\end{enumerate}
The proof of this fact is quite similar to the proof of  Lemma \ref{3.LueLem1}; one can also see details in \cite{BRChoe1}, P. 215.

Now  we fix the multiindices $\a_0,\dots,\a_\rb$, $\b_0,\dots,\b_\rb$ and, supposing that the set $\Jc$ is $2\rb+2+\e$ - sparse in the  direction $\g$, consider the set of multiindices
\begin{equation}\label{4:Zs}
    \Zs=\Z_+^d\setminus\blp (\cup_{j=0}^\rb(\Jc-\a_j))\bigcup(\cup_{j=0}^\rb(\Jc-\b_j))\brp.
\end{equation}
The set $\Z_+^d\setminus\Zs$ thus consists of $2N$ shifts of the set $\Jc$,  therefore
\begin{equation*}
    \liminf_{n\to\infty}\left\{n^{-1}\#\{\Zs\cap\g[0,n]\}\right\}>\e>0.
\end{equation*}
In other words, the set of integers $n$ such that $\a_j+n\gamma, \b_j+n\gamma\not\in\Jc$ for \emph{all} $j$ has positive density in $\Z_+$. In particular,  this    means   that
\begin{equation}\label{4:Zs1}
    \sum_{n:\g n \in \Zs}(n+1)^{-1}=\infty.
\end{equation}

Now we consider the function of the complex variable $w$:
\begin{gather*}
    \F(w)=\langle F^{\otimes N},\prod \zb_j^{\a_j+\g w}\overline{\det\blp \zb_j^{\b_k+\g w} \brp}\rangle\\=\langle F^{\otimes N},\prod \zb_j^{\a_j}\overline{\det\blp \zb_j^{\b_k} \brp}|\prod \zb_j^\g|^{2Nw}\rangle.
\end{gather*}
The function onto which the distribution $F^{\otimes N}$ acts, is not smooth for non-integer $w$, but we will take care of this in the following way. The distribution $F$, having compact support, must have finite order, $\k\ge0$. Thus $F$ can be extended as a functional on $\k$ times differentiable functions. The function $|\prod \zb_j^\g|^{2Nw}$ belongs to $C^{\k}$ for $2N\re w\ge \k$, so,  in the half-plane $\K=\{\re w>\frac{\k}{2N}\}$ the function $\F(w)$ is well defined, analytical, and it is continuous in $\overline{\K}$. Therefore, if the support of $F$ lies in the ball $|\zb|<R$, the function $\Psi(w)=R^{-Nw}\F(w)$ is a bounded analytical function in $\K$.
Since, by our construction, all $\a_j+\g n$, $\b_n+\g n$ belong to $\Z_+\setminus\Jc$ for all $n\in \Zs$, we have
 \begin{equation}\label{4.LeProof}\F(n)=\Psi(n)=0,\  n\in \Zs.\end{equation}
 Now let $H(\z)=\Psi\blp\frac{1+(\z+\k)}{1-(\z+\k)}\brp$.
  Then $H$  is a bounded analytical function on the unit disk. For any $n\in \Zs$, the equation \eqref{4.LeProof} implies that $H(\frac{n-1-\k}{n+1+\k})=0$. Now
  \begin{equation*}
    \sum_{n\in\Zs}\blp 1-\frac{n-1-\k}{n+1+\k}\brp=\sum_{n\in\Zs}\frac{2\k+2}{n+1+\k}=\infty
  \end{equation*}
  by \eqref{4:Zs1}.
The Corollary to Theorem 15.23 in \cite{Rudin} shows that in this case $H$ should be indentically zero on the unit disk,
and therefore $\F(w)=0, \re w>k$. By continuity, $\F(\k)=0$, and this means, in particular that
\begin{equation*}
    (|\zb^\g|^{2\k}F)^{\otimes N}\blp\prod_{j=0}^\rb \zb_j^{\a_j}\overline{\det(\zb_j)^{\b_k}} \brp=0.
\end{equation*}
In the reasoning above, the multiindices $\a_j,\b_j$ are arbitrary, this means that \eqref{4.VanishingMod} holds for the distribution $|\zb^\g|^{2\k}F$, and therefore,  by  Theorem \ref{4.Thm.Dim d},   this distribution must have a  support consisting of a finite number of points. Therefore the support of $F$   itself   is contained in the union of the above points and the subset where $\zb^\g=0$. The latter subset is the union of the subspaces $z_m=0$ for those $m$ that do not belong to $\nb(\g)$. In particular, if $F$ is a function, this implies that $F=0$. In the general case, we repeat the reasoning in the proof for any direction $\g_l$. Since, by the conditions of the theorem,  $\cup\nb(\g_l)=\{1,\dots,d\}$, the intersection of zero sets of $\zb^{\g_l}$ consists only of the point $0$, and this proves our statement.
\end{proof}
\section{Applications}\label{Applic}
In this section we give some applications of the results on the finite rank Toeplitz operators in analytical Bergman spaces.

\subsection{Approximation}\label{Appr}

For a subset $Q\subset C(\O)$, we denote by $\Zc(Q)$ the  set of common zeros of functions in $Q$. Conversely,
for a subset $E$ of $\O$, we denote by $J(E)$ the ideal in $C(\O) $ consisting of all functions
 vanishing on $E$.
Given a subspace $W$ in the Bergman space $\Bc(\O)$ of analytical functions, we denote by $\widehat{W}$ the closure in $C(\O)$ of the span of functions of the form $h(z)=f\bar{g}, f\in\Bc(\O), g\in W$ in the topology of uniform convergence on compacts in $\O$.
\begin{theorem}\label{5.ChoeAppl}(\cite{BRChoe1}). Let $W$ be a subspace in $\Bc(\O)$ with finite codimension. Then $\Zc(W)$ is a finite set and $\widehat{W}=J(\Zc(W))$. In particular, if $\Zc(W)=\varnothing$ then  $\widehat{W}=C(\O)$.\end{theorem}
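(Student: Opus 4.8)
The plan is to reduce the statement to the finite-rank theorem (Theorem~\ref{4.Thm.Dim d}) by a Hahn--Banach duality argument carried out in $C(\O)$.

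First I would dispose of the trivial half and of the finiteness of $\Zc(W)$. The inclusion $\widehat W\subseteq J(\Zc(W))$ is immediate: every generator $h=f\bar g$ of $\widehat W$ with $g\in W$ vanishes on $\Zc(W)$, hence so does every element of its linear span, and, since evaluation at a point is continuous for uniform convergence on compacts, of its closure as well. For the finiteness of $\Zc(W)$, put $m=\codim W$ and use that $\Bc(\O)$ is a reproducing-kernel space: for $z\in\Zc(W)$ the bounded evaluation functional $e_z\colon f\mapsto f(z)$ lies in $W^\perp$, a subspace of dimension at most $m$. If $\Zc(W)$ contained $m+1$ distinct points, the corresponding $e_z$'s would be linearly dependent, contradicting the fact that $\Bc(\O)$ interpolates arbitrary values at finitely many distinct points; hence $\card\Zc(W)\le m$.

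The substantive part, which I would handle by duality, is the reverse inclusion $J(\Zc(W))\subseteq\widehat W$. Since $\widehat W$ is a closed subspace of the locally convex space $C(\O)$, by Hahn--Banach it suffices to show that every continuous linear functional on $C(\O)$ annihilating $\widehat W$ also annihilates $J(\Zc(W))$; such functionals are exactly the compactly supported complex Borel measures $\mu$ on $\O$, and the hypothesis reads $\int f\bar g\,d\mu=0$ for all $f\in\Bc(\O)$ and $g\in W$. The key step is to note that this forces the infinite matrix $\Ac(\mu)=\blp\int\zb^\a\overline{\zb^\b}\,d\mu\brp_{\a,\b\in\Z_+^d}$ to have rank at most $m$: choosing $\psi_1,\dots,\psi_m\in\Bc(\O)$ whose cosets span $\Bc(\O)/W$ and writing each monomial $\zb^\b=g_\b+\sum_i\lambda_i(\b)\psi_i$ with $g_\b\in W$, we get $\int\zb^\a\overline{g_\b}\,d\mu=0$, so the $\b$-th column of $\Ac(\mu)$ is the combination $\sum_i\overline{\lambda_i(\b)}\blp\int\zb^\a\overline{\psi_i}\,d\mu\brp_\a$ of $m$ fixed vectors. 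Now Theorem~\ref{4.Thm.Dim d} applies and yields $\mu=\sum_q L_q\d(\zb-\zb_q)$ with differential operators $L_q$; being a measure, $\mu$ must in fact be a finite sum of point masses $\sum_q c_q\d(\zb-\zb_q)$ over distinct points, with $c_q\ne0$. Feeding into $\int f\bar g\,d\mu=0$ an $f\in\Bc(\O)$ that equals $1$ at $\zb_{q_0}$ and $0$ at the remaining $\zb_q$ gives $c_{q_0}\overline{g(\zb_{q_0})}=0$ for all $g\in W$, hence $\zb_{q_0}\in\Zc(W)$. Thus $\mu$ is supported on $\Zc(W)$ and therefore annihilates $J(\Zc(W))$, which completes the proof. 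The last assertion is just the case $\Zc(W)=\varnothing$, where $J(\varnothing)=C(\O)$.

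I expect essentially no obstacle beyond the invocation of Theorem~\ref{4.Thm.Dim d}: that theorem does all the hard work, and the one genuinely new ingredient is the short linear-algebra observation that finite codimension of $W$ collapses the Gram-type matrix $\Ac(\mu)$ to finite rank. The only points needing a little care are that a distribution of the form $\sum_q L_q\d(\zb-\zb_q)$ which happens to be a measure can carry no genuine derivatives of $\d$'s, and that $\Bc(\O)$ admits interpolation at finitely many distinct points (both transparent in the bounded-domain and Fock-space settings relevant here), the latter being what legitimizes both the count $\card\Zc(W)\le m$ and the passage to the monomial matrix $\Ac(\mu)$.
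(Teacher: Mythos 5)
Your proof is correct and follows essentially the same route as the paper's: identify the dual of $C(\O)$ with compactly supported complex Borel measures, observe that any such $\mu$ annihilating $\widehat W$ gives rise to a finite-rank Toeplitz matrix/operator (rank bounded by $\codim W$), invoke the finite-rank theorem to conclude $\mu$ is a finite sum of point masses, and then show those points lie in $\Zc(W)$. The small variations are organizational rather than substantive: you bound the rank of the monomial matrix $\Ac(\mu)$ column by column, whereas the paper observes that $\Tc_\mu\Bc\subset W^\perp$ so that $\Tc_\mu$ itself has finite rank; you prove $\card\Zc(W)\le\codim W$ up front via linear independence of evaluation functionals, whereas the paper extracts the finiteness of $\Zc(W)$ as a by-product of the support analysis; and you correctly invoke the multidimensional Theorem~\ref{4.Thm.Dim d}, whereas the paper (somewhat loosely, given the theorem is attributed to the higher-dimensional reference) cites the one-variable Theorem~\ref{3.Th.Alexroz}. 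Your remark that a measure of the form $\sum_q L_q\d(\zb-\zb_q)$ carries no genuine derivative terms is a necessary and correct observation that the paper glosses over.
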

\begin{proof} Endowed with the topology of uniform convergence on compact
sets, the space $C(\O)$ is locally convex and its continuous linear functionals are
identified with complex Borel measures supported on compact sets in $\O$. Let $Y$ be the space of measures orthogonal to $\widehat{W}$.
 If $Y\ne\varnothing$ there should exist
 a complex Borel measure $\m\ne0$ supported on a compact set in $\O$ such that
\begin{equation}\label{5.Choe.1}
    0=\int_{\O}f\bar{g}d\m=\int_\O(\Tc_\m f)\bar{g}d\l
\end{equation}
for all $f\in\Bc, g\in W$. This shows that $T_\m \Bc$
is contained in $W^\bot$, which is finite dimensional. By Theorem \ref{3.Th.Alexroz}, $\m$ must be supported on some finite set in $\O$, say $E_\m$. It follows that $\Tc_\m \Bc$ is spanned by finitely many kernel functions $P(\cdot,w)$, $w\in E_\m$. By \eqref{5.Choe.1}, we have that these functions $P(\cdot,w)$ lie in $W^\bot$. Since these functions are also linearly independent, the union $E$ of the sets $E_\m, \ \m\in Y$, must be finite. By the reproducing property, $E\subset \Zc(W)$. Moreover, we have $E=\Zc(W)$, because point masses at the points of $\Zc(W)$ belong to $Y$.
Now, since each $\m\in Y$ is supported in $E$, the ideal $J(E)=J(\Zc(W))$ is annihilated by all $\m\in Y$. Thus $J(\Zc(W))\subset \widehat{W}$, and the converse inclusion is obvious.  The case $Y=\varnothing$ is easily treated by the fact
that $Y=\varnothing$ if and only if $\Zc(W)=\varnothing$, which one may see from the proof above.\end{proof}

Theorem \ref{5.ChoeAppl} can be understood as saying that the linear combinations of the functions of the form $f\bar{g}$, $f\in W$, $g\in \Bc$, can approximate any continuous function uniformly on any compact not containing points from some finite set. By means of the more general Theorem \ref{4.LeMulti}, we can extend this approximation result in several directions.

\begin{theorem}\label{5:Appr} Let $\O\subset \C^d$ let  $\Jc\subset \Z_+^d$ be some set of multi-indices, $\Jc'=\Z_+^d\setminus \Jc$, satisfying the conditions of Theorem \ref{4.LeMulti} with $\rb<2N+1$, for some $N$. Denote by $\Pc=\Pc(\Jc)$ the space of polynomials of the form $\pb(z)=\sum c_\a \zb^\a, \a\in \Jc'$,  Let $U,V$ be linear subspaces in $ \Pc(\Jc)$ with codimension not greater than $N$. Then there are no more than $2N+1$ points  $\wb_\varkappa\in \O$ such that  for any $n$, the space $\Rc$ of linear combinations of functions of the form $\pb(\zb)\overline{\qb(\zb)}, \ \pb(\zb)\in U,  \qb(\zb)\in V$, is dense in $C^n(\O)$ in the sense of uniform convergence  of all derivatives of order not higher than $n$, on any compact $K\subset\O$ not containing the points $\wb_\varkappa$.\end{theorem}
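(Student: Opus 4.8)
The plan is to run the duality argument underlying Theorem \ref{5.ChoeAppl}, with $C(\O)$ replaced by $C^n(\O)$ and with Theorem \ref{4.LeMulti} used in place of Theorem \ref{3.Th.Alexroz}. First I would set up the dual picture. Give $C^n(\O)$ its usual Fr\'echet topology, of uniform convergence of all derivatives of order $\le n$ on compact subsets of $\O$; then, as is standard, its continuous linear functionals are exactly the compactly supported distributions in $\O$ of order at most $n$ (cf.\ \cite{Ho}). Let $Y$ be the linear space of \emph{all} distributions $F\in\Ec'(\O)$ --- each automatically of finite order --- that annihilate $\Rc$, that is, with $\langle F,\pb\overline{\qb}\rangle=0$ for every $\pb\in U$ and $\qb\in V$. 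A point worth noting at once is that $Y$ does not depend on $n$; this is what will allow one finite set of exceptional points to serve all $n$ simultaneously.

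Next I would observe that for each $F\in Y$ the reduced matrix $\Ac^\Jc(F)=\big(\langle F,\zb^\a\overline{\zb^\b}\rangle\big)_{\a,\b\in\Jc'}$ has rank at most $2N$. Indeed, $\Ac^\Jc(F)$ is the matrix, in the monomial basis $\{\zb^\a:\a\in\Jc'\}$ of $\Pc(\Jc)$, of the sesquilinear form $B(\pb,\qb)=\langle F,\pb\overline{\qb}\rangle$; since $B$ vanishes on $U\times V$ and $\codim U\le N$, $\codim V\le N$, the family of functionals $\{B(\pb,\cdot):\pb\in\Pc(\Jc)\}$ spans a space of dimension at most $N+N=2N$ (the part with $\pb\in U$ lies in the annihilator of $V$, of dimension $\le N$, while a complement of $U$ contributes at most $N$ more), so $\rank\Ac^\Jc(F)\le 2N$. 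Since $\Jc$ satisfies the sparseness conditions of Theorem \ref{4.LeMulti} with rank parameter $2N$ (which is the content of the hypothesis $\rb<2N+1$), Theorem \ref{4.LeMulti} applies to every $F\in Y$ and shows that $\supp F$ is a finite set with $\card\supp F\le 2N+1$.

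The crucial step --- the one with no literal counterpart in the proof of Theorem \ref{5.ChoeAppl}, where the union of supports was controlled by embedding reproducing kernels into the finite-dimensional space $W^\bot$ --- is to bound the union $E:=\bigcup_{F\in Y}\supp F$, and I expect this to be the main obstacle. I claim $\card E\le 2N+1$. Suppose not; choose distinct points $\wb_1,\dots,\wb_{2N+2}\in E$ and, for each $i$, some $F_i\in Y$ with $\wb_i\in\supp F_i$. Each $\supp F_i$ is finite, so $\bigcup_{i=1}^{2N+2}\supp F_i$ is a finite set; for every point $w$ in it, the set of coefficient vectors $(c_i)\in\C^{2N+2}$ for which $\sum_i c_iF_i$ vanishes in a neighbourhood of $w$ is a linear subspace, and it is \emph{proper} because $w$ lies in $\supp F_j$ for at least one $j$. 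Since $\C^{2N+2}$ is not the union of finitely many proper subspaces, one can choose $(c_i)$ so that $F^{*}:=\sum_i c_iF_i$ vanishes near none of these points, whence $\supp F^{*}=\bigcup_{i=1}^{2N+2}\supp F_i$ contains $\wb_1,\dots,\wb_{2N+2}$. But $F^{*}\in Y$, so by the previous paragraph $\card\supp F^{*}\le 2N+1$, a contradiction. Hence $E=\{\wb_\varkappa\}$ is a finite set of at most $2N+1$ points, and $E\subset\O$ because every $F\in Y$ has compact support in $\O$.

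It remains to check that these $\wb_\varkappa$ do the job. Fix $n$ and a compact $K\subset\O$ disjoint from $E$; it is enough to prove that $\Rc$ is dense in $C^n(\O\setminus E)$, for then in particular $h|_{\O\setminus E}$ can be approximated in $C^n$ on $K$ for every $h\in C^n(\O)$. If $\Rc$ were not dense in $C^n(\O\setminus E)$, the dual description above would furnish a nonzero distribution $F$ of order $\le n$ with compact support in $\O\setminus E$ satisfying $\langle F,\pb\overline{\qb}\rangle=0$ for all $\pb\in U$, $\qb\in V$; but then $F\in Y$, so $\supp F\subset E$, contradicting $\supp F\subset\O\setminus E$. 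Apart from the argument for $E$ above, the only point that requires real care is the bookkeeping with the dual of $C^n$; everything else is a transcription of the proof of Theorem \ref{5.ChoeAppl}.
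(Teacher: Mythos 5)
Your proposal is correct and follows essentially the same route as the paper's proof: a duality argument in the Fr\'echet space $C^n$, a rank bound of $2N$ on the reduced matrix $\Ac^{\Jc}(F)$ for any annihilating distribution (you obtain it via the sesquilinear form, the paper by appending rows and columns to a zero matrix; these are the same estimate), and then Theorem \ref{4.LeMulti} to make each support finite. Where you improve on the paper's exposition is in the bookkeeping around the set of exceptional points: you define $Y$ globally on $\O$ from the start (avoiding the paper's detour through an arbitrary compact $K$), you note explicitly that $Y$ is independent of $n$ because every compactly supported distribution has finite order, and you bound $\card\bigcup_{F\in Y}\supp F$ by a clean ``finitely many proper subspaces do not cover $\C^{2N+2}$'' argument rather than the paper's somewhat terser ``consider a linear combination of $F$ and $G$.'' These are refinements of the same proof, not a different method.
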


Compared with Theorem \ref{5.ChoeAppl}, this theorem takes care of  a more  strong  type of convergence, while the approximating set is considerably smaller.

\begin{proof}Suppose that on some compact $K$ the functions $\pb(\zb)\overline{\qb(\zb)}$ are not dense in the sense of uniform convergence on $K$ with derivatives of order up to $n$. This means that there exists a distribution $F$ with support in $K$ such that $\langle F,\pb(\zb)\overline{\qb(\zb)}\rangle=0$ for all $ \pb(\zb)\in U,  \qb(\zb)\in V$.

 Since $V$ has finite codimension in $\Pc(\Jc)$, there exist no more than $N$ polynomials $\f_0,\dots, \f_{N_0}$, $N_0< N$, so that $\Pc(\Jc)=V+\Span(\f_0,\dots,\f_{N_0})$.  Similarly, there exist no more than $N$ polynomials $\p_0,\dots,\p_{N_1}$, $N_1< N$, so that $\P(\Jc)=U+\Span(\p_0,\dots,\p_{N_1})$.

 We choose some basis $\pb_i(z)$ in $U$ and some basis $\qb_j(z)$ in $V$. Consider the infinite matrix $\Cc_0$
  consisting of elements $b_{ij}=\langle F,\pb_i(z)\overline{\qb_j(z)}\rangle$, which are, of course, all zeros. Now, we append the matrix $\Cc_0$ by $N_0$ columns $b_{i,-s}=\langle F,\pb_i(z)\overline{\f_s(z)}\rangle$, $s=0,\dots,N_0$ and then by $N_1$ horizontal rows $b_{-t,j}=\langle F,\p_t(z)\overline{\qb_j(z)}\rangle$, $b_{-t,-s}=\langle F,\p_t(z)\overline{\f_s(z)}\rangle$, $t=0,\dots,N_1$, thus obtaining the matrix ${\Cc}$. Each of these two operations increases the rank of the matrix no more than by $N$, so $\rank({\Cc})\le 2N.$ Now, since any monomial $\zb^k$, $k\in\Jc'$ is a finite linear combination of polynomials $\pb_i, \p_t$ and any monomial $\zb^l,$ $l\in \Jc'$ is a finite linear combination of polynomials $\qb, \f_s$, the matrix $\Ac^{\Jc}(F)$ consisting of $\ab_{k,l}=\langle F, \zb^k\overline{\zb^l}\rangle$, $k,l\in \Jc',$ has rank not greater than $\rank({\Cc})$, i.e., $\rank(\Ac^{\Jc}(F))\le 2N.$

 Now  Theorem \ref{4.TheorDimDmore} implies that the distribution $F$ has support consisting of no more than $2N+1$ points; we denote this set $\Zc(F)$. For some other distribution $G$, also vanishing on all functions of the form $\pb(\zb)\overline{\qb(\zb)}, \ \pb(\zb)\in U,  \qb(\zb)\in V $, the support $\Zc(G)$, by the same reasoning also consists of no more than $2N+1$ points. By considering a linear combination of $F$ and $G$, we see that still $\#\{\Zc(F)\cup\Zc(G)\}\le 2N+1$. So, the support of any distributions vanishing on $\pb(\zb)\overline{\qb(\zb)}, \ \pb(\zb)\in U,  \qb(\zb)\in V ,$ is a part of some set $E\subset K$, that has no more than $2N+1$ points. Therefore,  any function in $C^{n}$ can be approximated by the functions of the form $\pb(\zb)\overline{\qb(\zb)}, \ \pb(\zb)\in U,  \qb(\zb)\in V ,$ in $C^{n}$ uniformly on any compact $K'\subset K$, not containing these points. Finally, the compact $K$  in our construction can be chosen arbitrarily large, while the set $E$  can never have more than $2N+1$ points and thus can be taken independently of $K$.
\end{proof}

We give an example of the application of Theorem \ref{5:Appr}. For the sake of simplicity, we take $\Jc=\varnothing$. For multiindices $\a,k\in \Z_+^d$ we write $\a\prec k$ if each component of $\a$ is not greater than the corresponding component of $k$, while at least one component is strictly less.
  \begin{example}
 Suppose that for any multi-index   $k\in\Z_+^d$, $|k|>m$, two polynomials $\pb_k(\zb)$ and $\qb_k(\zb) , \zb\in \C^d$ are  given, of the form $\pb_k(\zb)=\zb^k+\sum_{\a\prec k}c_{\a,k}\zb^\a$, resp.,  $\qb_k(\zb)=\zb^k+\sum_{\b\prec k}p_{\b,k}\zb^\b$. These sets of polynomials have codimension not greater than $N=\binom{d+m}{d}$ in the space of all polynomials. Thus, Theorem \ref{5:Appr} guarantees that there are no more than $2N+2$ points $\wb_\varkappa$ such that any $C^n-$ function can be approximated by linear combinations of $\pb_k(\zb)\overline{\qb_l(\zb)}$ on any compact not containing these points.
\end{example}

\subsection{Products of Toeplitz operators}
It has  been known since long ago that the product of two Toeplitz operators in the Hardy space on a circle can be zero only in the case one of them is zero, see \cite{BrowHal}. This result has been gradually extended to  an arbitrary finite product  of operators on a circle (see \cite{AlemVuk}) and to  the multi-dimensional case, i.e. operators in  the Hardy space on the torus, where the product of up to six Toeplitz operators is taken care of, see \cite{Ding}.

Much less understandable is the situation with  Toeplitz operators in the Bergman space; even for the case of a disk it is still not    known if it is true that
for $f,g\in L_\infty$, the relation $\Tc_f\Tc_g=0$, or, more generally  $\rank(\Tc_f\Tc_g)<\infty$,  implies vanishing of $g$ or $f$. Affirmative answers to this problem, as well as to its multidimensional versions,
have been obtained  only in rather special cases, say, under the assumption that the functions $f,g$ are harmonic or $\mb-$ harmonic (see \cite{Guo}, \cite{BRChoe2} where extensive references can also be found.)

We present here some very recent results on the finite rank product problem, essentially obtained by T. Le \cite{Le}.

\begin{theorem}\label{Le.Product.2functions}
Let $D$ be the unit disk in $\C^1$. Suppose that the function $f(z)\in L_2(D)$, $|z|\le 1$ has in the expansion in polar coordinates the form
\begin{equation}\label{5.LePr2.1}
    f(re^{i\theta})=\sum_{n=-\infty}^{M}f_n(r)e^{in\theta},
\end{equation}
and  $\hat{f}_M({l})=\int_0^1 f_M(r)r^ldr\ne 0$ for all  $l$ large enough, $l>l_0$. If for some distribution $G\in \Ec'(D)$, the product $\Tc_G\Tc_f$ has finite rank, the distribution $G$ must have finite support. In particular, if $G$ is a function then $G=0.$
\end{theorem}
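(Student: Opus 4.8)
The plan is to reduce the finite-rank condition on the product $\Tc_G\Tc_f$ to a finite-rank condition on a Toeplitz-type matrix for the distribution $G$ alone, and then invoke Theorem \ref{3.Th.Alexroz}. The key is to choose a convenient system of test functions in the Bergman space. Since we are working in the disk $D$, the natural orthogonal system is the monomials $z^k$, $k\ge 0$. First I would compute $\Tc_f z^k$ explicitly. Because $z^k$ is analytic and $f$ has the polar expansion \eqref{5.LePr2.1} with Fourier modes only up to $M$, the product $f(z)z^k$ in polar coordinates is $\sum_{n\le M} f_n(r)r^k e^{i(n+k)\theta}$; applying the Bergman projection $\Pb$ onto analytic functions keeps only the nonnegative-frequency analytic part, and one sees that $\Tc_f z^k$ is a (finite, since $n\le M$) linear combination of monomials $z^m$ with $m\le M+k$, with the top coefficient (at $z^{M+k}$) proportional to $\hat f_M(2k+\text{const})$, which is nonzero for $k$ large by hypothesis. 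Thus, modulo lower-order monomials, $\Tc_f z^k$ is a nonzero multiple of $z^{M+k}$.

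Next I would use this to relate the matrix of $\Tc_G\Tc_f$ in the monomial basis to the matrix $\Ac(G)$ of the distribution $G$. Concretely, $(\Tc_G\Tc_f z^k, z^l) = \langle \s G, (\Tc_f z^k)\,\overline{z^l}\rangle$, and substituting the expression for $\Tc_f z^k$ as a triangular combination of monomials $z^m$, $m\le M+k$, we get that the matrix $\big((\Tc_G\Tc_f z^k,z^l)\big)_{k,l\ge 0}$ is obtained from the matrix $\big(\langle\s G, z^m\bar z^l\rangle\big)_{m,l}$ by multiplying on one side by an (infinite) upper-triangular matrix whose diagonal entries $\hat f_M(\cdot)$ are eventually nonzero. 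Such a triangular factor is, up to a finite-dimensional correction coming from the finitely many small values of $k$ where the diagonal entry might vanish, invertible; hence the rank of $\Ac(G)$ exceeds the rank of $\Tc_G\Tc_f$ by at most a finite amount. Therefore, if $\rank(\Tc_G\Tc_f)<\infty$ then $\rank\Ac(G)<\infty$.

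Once $\Ac(G)$ has finite rank, Theorem \ref{3.Th.Alexroz} applies directly and yields that $G$ is a finite combination of $\delta$-distributions and their derivatives at finitely many points of $D$; in particular $G$ has finite support, and if $G$ happens to be a function it must vanish. This completes the argument.

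The main obstacle I anticipate is making the "triangular-factor-is-essentially-invertible" step precise in the infinite-dimensional setting: one must argue carefully that multiplying the matrix of $G$ by an infinite upper-triangular matrix which is eventually (but not uniformly) invertible changes the rank by only a finite amount. The clean way is to work column by column: the columns of the matrix of $\Tc_G\Tc_f$ indexed by $k>l_0$ span, together with finitely many extra columns, the same space as the columns $m>M+l_0$ of $\Ac(G)$, because each $\Tc_f z^k$ with $k>l_0$ is $z^{M+k}$ plus lower monomials with nonzero leading coefficient, so a finite back-substitution expresses each high-index monomial column of $\Ac(G)$ as a combination of product-operator columns plus finitely many low-index monomial columns. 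A secondary technical point is verifying the formula for $\Tc_f z^k$ when $f$ is merely in $L_2(D)$ rather than bounded: here one should note that $fz^k\in L_1$ and the Bergman projection of an $L_1$ function is still a well-defined analytic function given by integration against the (bounded, for fixed argument away from the boundary — but here we only pair against the compactly-meaningful monomials $\bar z^l$) kernel, and the pairing $\langle \s G,(\Tc_f z^k)\bar z^l\rangle$ makes sense since $G\in\Ec'(D)$ acts on the smooth function $(\Tc_f z^k)\bar z^l$; one may also first multiply by a cutoff as in the $|g|^2$-trick preceding Theorem \ref{4.TheorDimDmore} to stay within the smooth setting.
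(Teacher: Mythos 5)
Your proposal is correct and follows essentially the same route as the paper: compute the matrix of $\Tc_f$ in the monomial basis, observe it is banded with eventually nonzero entries on the $M$-th superdiagonal (by the hypothesis on $\hat f_M$), and perform a back-substitution to conclude that the full matrix $\Ac(G)$ has finite rank, then invoke the distributional finite-rank theorem. The only cosmetic difference is that you cite Theorem \ref{3.Th.Alexroz} while the paper cites the more general Theorem \ref{4.LeMulti}; both apply once the full matrix is shown to have finite rank.
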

\begin{proof} The proof follows mostly the one in \cite{Le}, with modifications allowed by more advanced finite rank theorems. First, recall that the Bergman space $\Bc^2$ on the disk has a natural orthonormal  basis
 \begin{equation}\label{5:base}\eb_s(z)=\sqrt{s+1}z^s, \ s=0,1,\dots.\end{equation}
  The matrix representation of the operator $\Tc_f$ in this basis has the form
\begin{equation*}
    (\Tc_f\eb_k,\eb_l)=C_{k,l}\int\limits_0^1f_{l-k}(r)r^{k+l+1}, \ k,l\ge0,
\end{equation*}
 $C_{kl}=2\sqrt{(k+1)(l+1)}.$ By our  assumption about $f$, we have  $(\Tc_f\eb_k,\eb_l )=0$ whenever $l-k<M$. Thus for $k\in \Z_+$, we can write
\begin{gather*}
    \Tc_f\eb_k=\sum_{l=0}^{k+M}(\Tc_f\eb_k,\eb_l)\eb_l=
    C_{k,k+M}\hat{f}_M(2k+M+1)\eb_{k+M}+\\
    \sum_{l=0}^{k+M+1}C_{k,l}\hat{f}_{l-k}(k+l+1)\eb_l.
\end{gather*}
This shows that when $k+M\ge 1$ and $2k+M+1>l_0$,
 the function $\eb_{k+M}$ can be expressed as a linear combination of $\Tc_f\eb_k$ and $\eb_l, l<M+k$.
 Now suppose that $\Tc_G\Tc_f$ has finite rank $\rb$ and let $\f_1,\dots,\f_\rb$ be some basis in the range of $\Tc_G\Tc_f$. Then for any nonnegative integer $k$ such that $k+M\ge1$ and $2k+M+1>l_0$,  the function $\Tc_G \eb_{k+M}$ is a linear combination of $\f_1,\dots,\f_\rb$ and $\Tc_G\eb_l,$ $l\le k+M$. We substitute consecutively this expression for $\Tc_G\eb_{k+M}$ into the similar expression for $\Tc_G\eb_{k'+M}$, for $k'>k$. Thus all functions $\Tc_G \eb_{k'+M}$, $k'+M>1$, $2k'+M+1>l_0$,  will be expressed as  linear combinations of functions $\Tc_G\eb_{k+M}$ with $2k+M+1\le l_0$ and the finite set of functions $\f_1,\dots,\f_\rb$. This means that the matrix with entries $(\Tc_G \eb_{k},\eb_{l})$ has finite rank.
 Now we can apply Theorem \ref{4.LeMulti} that grants the required properties for $G$.
  \end{proof}

  Since for any monomial $z^k\bar{z}^l$, $\widehat{r^{k+l}}(m)$ is never zero, the conditions of Theorem \ref{Le.Product.2functions} are fulfilled for any $f$ having the form $f(z)=p(z,\bar{z})+\overline{h(z)}$ where $p$ is a nonzero polynomial of $z,\bar{z}$ and $h$ is a bounded analytical function.

  Another type of results on finite rank products of Toeplitz operators in the analytical Bergman space in the unit disk or polydisk, established in \cite{Le}, \cite{Le1}, covers the case when  all Toeplitz weights, except one, are functions of a special form. We present here the formulation of the general theorem proved in \cite{Le1}, generalized to cover the case of of distributional weights.

  \begin{theorem}\label{5:ProdRadial}Let $f_1,\dots,f_{m_1+m_2}$ be bounded functions in the polydisk $D^d\subset\C^d$ such that each of them is radial, $f_j(z_1,\dots,z_d)=f_j(|z_1|,\dots,|z_d|)$ and none is identically zero. For a collection of multiindices $\a_j,\b_j\in \Z_+^d, \ j=1,\dots, m_1+m_2$ we set $g_j(z)=f_j(z)\zb^{\a_j}\bar{\zb}^{\b_j}$.
  Suppose that $F$ is a distribution with compact support in $D^d$ and the  operator
  \begin{equation}\label{5:LeProduct}
    A=\Tc_{g_1}\dots \Tc_{g_{m_1}}\Tc_F \Tc_{g_{m_1+1}}\dots \Tc_{g_{m_1+m_2}}
  \end{equation}
  has finite rank. Then $F$ has finite support. In particular, if $F$ is a function, $F$ is zero.
  \end{theorem}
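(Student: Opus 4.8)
The plan is to reduce Theorem \ref{5:ProdRadial} to the already-proved multidimensional finite rank theorem (Theorem \ref{4.LeMulti}, or in the weaker disguise Theorem \ref{4.TheorDimDmore}) by the same ``shift and eliminate'' mechanism used in the proof of Theorem \ref{Le.Product.2functions}, but exploiting that each radial monomial-type factor $\Tc_{g_j}$ acts almost diagonally on the standard monomial basis. First I would record the basic computation: in the orthonormal basis $\eb_{\sbb}(\zb)=c_{\sbb}\zb^{\sbb}$ of the Bergman space on $D^d$ (with $\sbb\in\Z_+^d$), a radial weight $f_j$ is a Fourier multiplier in each angular variable, so $\Tc_{f_j}\eb_{\sbb}$ is a scalar multiple $\lambda_j(\sbb)\eb_{\sbb}$, where $\lambda_j(\sbb)$ is, up to harmless combinatorial constants, a product of one-dimensional radial Mellin-type integrals $\int_0^1 f_j^{(i)}(r_i)r_i^{2s_i+1}\,dr_i$. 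The key positivity input is that such an integral of a nonzero $L_\infty$ radial function is nonzero for all sufficiently large $s_i$ — indeed the Mellin transform of a nontrivial function cannot vanish on an arithmetic progression of integers going to infinity — so $\lambda_j(\sbb)\ne0$ once all components of $\sbb$ exceed some threshold $\sbb_0$. Tensoring with $\zb^{\a_j}\bar\zb^{\b_j}$ only shifts indices: $\Tc_{g_j}$ maps $\eb_{\sbb}$, modulo lower-order monomials, to a nonzero multiple of $\eb_{\sbb+\a_j-\b_j}$ whenever $\sbb$ is large.

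Second, I would run the elimination. Let $A=\Tc_{g_1}\cdots\Tc_{g_{m_1}}\Tc_F\Tc_{g_{m_1+1}}\cdots\Tc_{g_{m_1+m_2}}$ have finite rank $\rb$, with $\f_1,\dots,\f_\rb$ a basis of its range. Working from the right, for $\sbb$ large the vector $\Tc_{g_{m_1+1}}\cdots\Tc_{g_{m_1+m_2}}\eb_{\sbb}$ is a nonzero multiple of $\eb_{\sbb+\gamma}$ plus a linear combination of $\eb_{\sbb'}$ with $\sbb'$ ``lower'' (componentwise dominated by $\sbb+\gamma$ with strictly smaller total weight), where $\gamma=\sum_{j>m_1}(\a_j-\b_j)$. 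Hence modulo the finitely many ``small'' basis vectors $\eb_{\sbb}$ with some $s_i\le(\sbb_0)_i$ and modulo a triangular change of basis, $\eb_{\sbb+\gamma}$ lies in the span of $\Tc_{g_{m_1+1}}\cdots\Tc_{g_{m_1+m_2}}\eb_{\sbb}$ and lower $\eb_{\sbb'}$. Applying $\Tc_F\Tc_{g_1}\cdots\Tc_{g_{m_1}}$ — no, more carefully: the relation $A\eb_{\sbb}\in\Span\{\f_1,\dots,\f_\rb\}$ means $\Tc_{g_1}\cdots\Tc_{g_{m_1}}\Tc_F(\Tc_{g_{m_1+1}}\cdots\Tc_{g_{m_1+m_2}}\eb_{\sbb})\in\Span\{\f_i\}$, so using the triangular expansion above one deduces that $\Tc_{g_1}\cdots\Tc_{g_{m_1}}\Tc_F\eb_{\sbb+\gamma}$ is, for all large $\sbb$, a linear combination of the $\f_i$ and of finitely many fixed vectors $\Tc_{g_1}\cdots\Tc_{g_{m_1}}\Tc_F\eb_{\sbb'}$ with $\sbb'$ small. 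Iterating the substitution along the ``staircase'' exactly as in the proof of Theorem \ref{Le.Product.2functions} collapses the whole collection $\{\Tc_{g_1}\cdots\Tc_{g_{m_1}}\Tc_F\eb_{\sbb}\}_{\sbb}$ into a finite-dimensional span. Thus the matrix $\bigl((\Tc_{g_1}\cdots\Tc_{g_{m_1}}\Tc_F\eb_{\sbb},\eb_{\tb})\bigr)$ has finite rank, and peeling off the left radial factors $\Tc_{g_1},\dots,\Tc_{g_{m_1}}$ by the same left-sided argument (they are again almost-diagonal with nonzero entries for large indices, so multiplying by them only shifts rows/columns) shows the bare matrix $\Ac(F)=\bigl(\langle F,\zb^{\sbb}\bar\zb^{\tb}\rangle\bigr)$ has finite rank.

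Third, I would invoke the finite rank theorem. Once $\rank\Ac(F)<\infty$, Theorem \ref{4.Thm.Dim d} (equivalently the stronger Theorem \ref{4.TheorDimDmore}) gives directly that $\card\supp F\le\rank\Ac(F)<\infty$ and $F=\sum_q L_q\d(\zb-\zb_q)$; and if $F$ is a genuine $L_2$ function this forces $F=0$. That is the desired conclusion.

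The main obstacle I expect is bookkeeping the ``triangular'' structure cleanly in several variables: unlike the one-dimensional case in Theorem \ref{Le.Product.2functions}, where the monomials are linearly ordered by degree and the shift $M$ is a single integer, here the shifts $\a_j-\b_j$ are vectors with possibly negative components and the lower-order terms coming from each $\Tc_{g_j}$ are indexed by a partial order, so one must choose the ordering (e.g. total degree, refined lexicographically) in which every $\Tc_{g_j}$ is genuinely triangular with invertible ``diagonal'' part on the cofinite set $\{\sbb:\sbb\succeq\sbb_0\}$, and verify that the finitely many ``boundary'' basis vectors that are excluded do not obstruct the collapse. A secondary technical point is the nonvanishing of the Mellin integrals on an arithmetic progression: this is where the hypothesis that the $f_j$ are nonzero \emph{bounded} functions enters, via the fact that $\int_0^1 f(r)r^{s}\,dr$, as a function of $s$, extends to a bounded analytic function on a right half-plane whose zero set cannot accumulate along $\{s=2k+c\}_{k\to\infty}$ unless $f\equiv0$ — essentially the same Blaschke-condition argument (Theorem 15.23 in \cite{Rudin}) already used in the proof of Theorem \ref{4.LeMulti}. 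Everything else is a routine adaptation of arguments already present in the paper.
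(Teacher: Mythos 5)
Your overall strategy --- exploit that the radial factors $\Tc_{g_j}$ act almost diagonally on the monomial basis, peel them off, and feed the result into a finite-rank theorem --- is exactly the paper's (which sketches the reduction via $\Ker\Sc_1$ and the cokernel of $\Sc_2$ and writes out details only for $d=1$, $m_1=1$, $m_2=0$). But two points in your execution are wrong. First, your claim that $\lambda_j(\sbb)\ne0$ once all components of $\sbb$ exceed a threshold is false: for a nonzero bounded radial factor, the M\"untz--Sz\'asz theorem controls the zero set of $s\mapsto\int_0^1 f(r)r^{2s+1}\,dr$ on $\Z_+$ only by the Blaschke-type condition $\sum_{s}(s+1)^{-1}<\infty$, which permits an \emph{infinite} sparse zero set (e.g.\ $\{2^k\}$). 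So the ``bad'' indices form an infinite sparse set, not a finite boundary layer; your elimination can at best show that a reduced matrix $\Ac^\Jc(F)$ with $\Jc$ sparse has finite rank, and the theorem you need at the end is Theorem \ref{4.LeMulti}, not Theorem \ref{4.Thm.Dim d} as your third paragraph claims --- the full matrix $\Ac(F)$ is never shown to have finite rank.

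Second, the ``modulo lower-order monomials''/triangular picture is spurious and imported from the wrong theorem. Because each $g_j$ carries a single angular mode $\zb^{\a_j}\bar\zb^{\b_j}$, the angular integration in $(\Tc_{g_j}\eb_\sbb,\eb_\tb)$ forces $\tb=\sbb+\a_j-\b_j$ exactly; each $\Tc_{g_j}$, and hence $\Sc_1$ and $\Sc_2$, is a \emph{pure} weighted shift with no lower-order corrections (this is the formula \eqref{5:ActionTg} in the paper). The triangular structure and the staircase substitution belong to Theorem \ref{Le.Product.2functions}, where the symbol has an infinite angular Fourier expansion. Using the exact shift structure is what makes $\Ker\Sc_1$ and $(\Ran\Sc_2)^\perp$ closed spans of monomials indexed by sparse sets, and it dispenses entirely with the staircase substitution that, as you yourself flag, does not obviously terminate in several variables. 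With these two repairs --- sparse (not cofinite) exceptional set, and Theorem \ref{4.LeMulti} at the end --- your argument lines up with the paper's sketch.
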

  The proof is based upon the consideration of the kernel of the product of the operators $\Sc_1=\Tc_{g_1}\dots \Tc_{g_{m_1}}$ and the range of $\Sc_2 =\Tc_{g_{m_1+1}}\dots \Tc_{g_{m_1+m_2}}$. The action of these operators is explicitly described in the natural basis in the Bergman space (and it is here the geometry of the polydisk is crucial.) The set of multiindices numbering the basis functions in the kernel of $\Sc_1$ and in the cokernel of $\Sc_2$ turns out to be sparse. Therefore, the finiteness of the rank of $\Sc_1 \Tc_F \Sc_2$ leads to the finiteness of the rank of the properly restricted operator $\Tc_F$. The reasoning concludes by the application of Theorem \ref{4.LeMulti}.

  To demonstrate the idea, not going into complicated details, we present the proof, borrowed from \cite{Le}, \cite{Le1}, for the most simple case, when $d=1$, $m_1=1$, $m_2=0$,
   so the operator $A$ in \eqref{5:LeProduct} has the form $A= T_gT_F $
\begin{proof}As in the proof of Theorem \ref{Le.Product.2functions}, we consider the standard orthogonal  basis $
\eb_s$ in the Bergman space, given by \eqref{5:base}. In this base, the action of the operator $\Tc_g$ for $g(z)=f(|z|)z^{\a}\bar{z}^\b$ is easily calculated,
\begin{equation}\label{5:ActionTg}
    \Tc_g \eb_s=\left\{ \begin{array}{c}
              0,\ s<\a-\b \\
              C\hat{f}(2s+2\a+1)\eb_{s+\a-\b},\ s\ge \a-\b
            \end{array}\right. ,
\end{equation}
with some positive constants $C$, depending on all indices and exponents in \eqref{5:ActionTg}.

Denote $\Jc=\{s:s<\a+\b\}\bigcup\{s :\hat{f}(2s+2\a+1)=0\}.$
Since the function $f$ is nonzero,  the set $\Jc$ is sparse by M\"{u}ntz-S\'{a}szs theorem. For $s\not\in \Jc$, we see from  \eqref{5:ActionTg} that $\Tc_g\eb_s\ne 0$ and $\eb_{s+\a-\b}$ is a multiple of $\Tc_g\eb_s$. Suppose that $\ff\in \Bc$ is a function such that $\Tc_g\ff=0.$
Then
\begin{equation*}
    0=\Tc_g\ff=\Tc_g\left(\sum_s(\ff,\eb_s)\eb_s\right)=\sum_s(\ff,\eb_s)\Tc_g\eb_s.
\end{equation*}
By \eqref{5:ActionTg}, this implies that $(\ff,\eb_s)=0$ for all $s\not\in \Jc$. Therefore, $\Ker \Tc_g$ is contained in the closed  span  of $\{\eb_s,\ s\in\Jc\}$. It follows that the rank of the matrix $(\Tc_F \eb_s,\eb_t), \ s,t\not\in \Jc$ is not greater than the rank of $\Tc_g\Tc_F$, thus it is finite. Finally, Theorem \ref{4.LeMulti} applies.
\end{proof}
\subsection{Sums    of  products of Toeplitz operators}
 Another interesting problem in the theory of Bergman spaces consists in determining the condition for some algebraic expression involving  Toeplitz operators to be a Toeplitz operator again. The results existing by now concern only Toeplitz operators with weights of some special form.

In \cite{BRChoe3} this problem has been considered in the following setting. Suppose  that $u_j,v_j,\ j=1,\dots,n,$ and $w$ are $\mb$--harmonic functions in the polydisk  $D^\mb\subset{\C^\mb}$.
\begin{theorem}(\cite{BRChoe3}) The necessary and sufficient condition for the  the operator $S=\Tc_w+\sum \Tc_{u_j}\Tc_{v_j}$ to have finite rank, $S=\sum_{l=1}^\rb(\cdot,g_l)f_l$ with some analytical functions $f_l,\ g_l$ is  $$\sum u_jv_j+w=\prod_{k=1}^\mb(1-|z_k|^2)^2\sum f_lg_l$$
and
$$w+\sum\overline{\Pb \overline{u_j}}\Pb v_j  {\mathrm is } \mb {  \mathrm harmonic}.$$
\end{theorem}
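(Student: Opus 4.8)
The plan is to reduce the finite-rank condition to a pointwise identity of symbols plus a regularity ($\mb$-harmonicity) condition, using the finite-rank machinery developed above. First I would compute the Berezin-type symbol, or more directly the sesquilinear form, of the operator $S$ against the reproducing kernels of the $\mb$-harmonic (equivalently, the analytic) Bergman space of the polydisk. Writing the reproducing kernel as a product $K(\zb,\wb) = \prod_{k=1}^\mb (1-z_k\bar w_k)^{-2}$, one computes $\Tc_{u_j}\Tc_{v_j}$ on kernel functions using that, for $\mb$-harmonic $v_j$, $\Tc_{v_j}K(\cdot,\wb) = \Pb(v_j K(\cdot,\wb))$ has an explicit form; the product $\Tc_{u_j}\Tc_{v_j}$ then contributes $\overline{\Pb\overline{u_j}}(\wb)\,\Pb v_j(\wb)$ after contracting, and $\Tc_w$ contributes $w$ or rather its harmonic-projection part. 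This is where the second displayed condition ($w + \sum \overline{\Pb\overline{u_j}}\,\Pb v_j$ being $\mb$-harmonic) will naturally emerge: the ``bad'' part of $S$ not captured by a single Toeplitz symbol is exactly the obstruction to this $\mb$-harmonicity.

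Next I would translate ``$S$ has finite rank $\rb$'' into the language of Section 4. Since $S = \sum_{l=1}^\rb (\cdot, g_l) f_l$ with analytic $f_l, g_l$, and since $S$ differs from a Toeplitz operator $\Tc_w + \sum\Tc_{u_j}\Tc_{v_j}$, the difference of the two sides, tested against analytic monomials $\zb^\a, \zb^\b$, produces a finite-rank matrix $\Ac(F)$ for an appropriate distribution $F$ built from the symbols. Concretely, $(S\eb_\a,\eb_\b) - (\Tc_{\sigma}\eb_\a,\eb_\b)$ for the candidate Toeplitz symbol $\sigma = \prod_k(1-|z_k|^2)^2 \sum f_l g_l$ should be expressible as $\langle F, \zb^\a\overline{\zb}^\b\rangle$ with $F$ compactly supported (in fact supported on the reproducing-kernel data, a finite point set) — this is where Theorem \ref{4.Thm.Dim d} or Theorem \ref{4.TheorDimDmore} gets applied, forcing $F$ to be a finite combination of $\delta$'s and their derivatives, and then the polydisk geometry forces $F = 0$ outright because a genuine Toeplitz operator with bounded symbol cannot have a $\delta$-type defect. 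Running this in both directions gives the equivalence.

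For the converse (sufficiency), I would simply verify that if the two displayed identities hold, then $S$ acts on the Bergman space as a finite-rank operator: the pointwise identity $\sum u_j v_j + w = \prod_k(1-|z_k|^2)^2 \sum f_l g_l$ says the total ``symbol'' of $S$ is $\prod_k(1-|z_k|^2)^2 \sum f_l g_l$, and one checks directly — using $\Tc_{(1-|z_k|^2)^2 h} \sim$ a rank-bounded perturbation on the polydisk, via the explicit kernel expansion — that such an operator is the finite sum $\sum(\cdot,g_l)f_l$; the $\mb$-harmonicity condition guarantees the $\Tc_w + \sum\Tc_{u_j}\Tc_{v_j}$ representation is consistent (i.e. that no extra non-$\mb$-harmonic piece sneaks in).

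The main obstacle I expect is the explicit symbol computation on the polydisk: getting the precise form of $\Tc_{u_j}\Tc_{v_j}$ applied to kernels, with the $\overline{\Pb\overline{u_j}}\,\Pb v_j$ term appearing cleanly, requires careful bookkeeping with the product structure of the $\mb$-harmonic Bergman kernel and the interplay between multiplication by $(1-|z_k|^2)^2$ and the Bergman projection in each variable separately. Once that computation is in hand, the finite-rank input from Theorem \ref{4.TheorDimDmore} does the conceptual work of turning ``the defect has finite rank'' into ``the defect vanishes,'' and the rest is bookkeeping.
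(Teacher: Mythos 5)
The paper does not actually prove this theorem: it states it, cites Choe--Koo--Lee, and remarks only that the proof ``is based upon the finite rank theorems.'' Your sketch is consistent with that remark at the highest level, but it has real gaps. The central one is that the finite-rank theorems of Sections 3--4 require the weight to be a \emph{compactly supported} measure or distribution in the domain, whereas the $\mb$-harmonic symbols $u_j, v_j, w$ live on the whole polydisk (with nontrivial boundary behavior), and the operator $S = \Tc_w + \sum\Tc_{u_j}\Tc_{v_j}$ is not itself a Toeplitz operator with any single symbol. You assert that the defect form $(S\eb_\a,\eb_\b) - (\Tc_\sigma\eb_\a,\eb_\b)$ ``should be expressible as $\langle F,\zb^\a\overline{\zb}^\b\rangle$ with $F$ compactly supported,'' but no such $F$ is exhibited, and nothing guarantees one exists: the defect $S - \Tc_\sigma$ has no reason to be in Toeplitz form. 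The actual reduction in \cite{BRChoe3} relies on decomposing each $\Tc_{u_j}\Tc_{v_j}$ using the tensor structure of the polydisk and the $\mb$-harmonic decomposition of the symbols into analytic/conjugate-analytic pieces in each variable, and that work is entirely missing here.

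A second gap: even if a compactly supported $F$ were produced, Theorem \ref{4.Thm.Dim d} gives only that $F$ is a finite combination of point masses, not that $F=0$; your claim that ``a genuine Toeplitz operator with bounded symbol cannot have a $\delta$-type defect'' is stated but not argued. (The usual route is to show $F$ is a priori a function and hence forced to vanish -- but one must first establish that it \emph{is} a function.) Finally, both the explicit kernel computation producing $\overline{\Pb\overline{u_j}}\,\Pb v_j$ and the sufficiency direction are deferred to ``careful bookkeeping'' and ``one checks directly,'' which is precisely where the theorem's content lies. As it stands this is a plan of attack with the right general flavor, not a proof.
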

The proof of this result, as well as other ones in \cite{BRChoe3}, is based upon the finite rank theorems.

 \subsection{Landau  Hamiltonian and Landau-Toeplitz operators}  This    topic   was the source  of  the initial interest of  the author  in     Bergman-Toeplitz    operators.  About   the Landau  Hamiltonian one can find    a   detailed    information in  \cite{MelRoz},   \cite{RaiWar}, \cite{Bruneau},     and references therein. It  is  a   second  order   differential    operator    $H$ in $L_2(\C^1)=L_2(\R^2)$    that    describes   the dynamics    of  a   quantum particle    confined    to  a plane,  under   the action  of  the uniform magnetic    field     $B$  acting  orthogonal  to  the plane.  The operator   has  spectrum    consisting  of  eigenvalues $\L_q=(2q+1)B$, $q=0,1\dots,$ called    {\it Landau  levels},   with    corresponding   spectral    subspaces  $X_q$    having  infinite    dimension.  The subspace    $X_0$   is  closely related with    the Fock    space:  $X_0$   consists    of  the functions   $u(z)\in    L_2$,   $z\in\C^1$,   having  the form    $u(z)=\exp(-\frac{B|z^2|}{4})f(z)$,   where   $f(z)$  is  an  entire  analytical  function.

The other   spectral    subspaces   $X_q$   are obtained    from    $X_0$   by  the action  of  the so  called  \emph{creation    operator }   $\overline{Q}=(2i)^{-1}({\partial}+\frac{B}{2}(x_2-ix_1))$:
\begin{equation}\label{5.creation}
    X_q=\overline{Q}^qX_0.
\end{equation}

Under   the perturbation    by  the operator    of  multiplication  by  a   real    valued  function    $V(\xb)$,   tending to  zero    at  infinity,   the spectrum,   generally,   splits  into    clusters    around  Landau  levels, and a   lot of  interesting  results   were obtained,   describing   in  detail  the properties  of  these   clusters.  In  particular, for the case    of  the perturbation    $V$ having  compact support,    the infiniteness of  the clusters   has been    proved  only    under   the condition   that    $V$ has constant    sign    (or it's    minor  generalizations). It  remained    unclear whether it  is  possible    that    some    perturbation    would   not split   some    (or all)    clusters.

The methods of  the papers  cited   above   relate  this    question    with  the following   one:   is  it  possible    that    for some    function    $V$ with    compact support,    the \emph{Landau-Toeplitz}  operator    $\Tb_q(V)$  in  $X_q$  has finite  rank.   Here    $\Tb_q(V)$  is the operator $\Tb_q(V)u=P_qVu$    in $X_q$,   where   $P_q$   is  the orthogonal  projection  onto    $X_q$.  More exactly,   the eigenvalues of  $\Tb_q(V)$ give the main contribution to the spectrum of  $H+V$  near    $\L_q$. If   $\Tb_q(V)$ has finite  rank,   this    does    not immediately mean    that    the Landau  level  $\L_q$ does not split, but  only that it may split by the interaction with $L_{q'}$, $q'\ne q$. In such case we will say that there is no principal splitting.

The case    $q=0$   can be  treated directly    by  means   of  Luecking's  theorem.
The subspace $X_0$ consists of  analytical function multiplied by a Gaussian weight. Therefore, the infinite matrix  $\Ac(V)$    constructed by  means   of  the functions $f_j=\exp(-\frac{B|z^2|}{4})z^j$  is  the same    as  the matrix  $\Ac(F)$    for     the function $F=\exp(-\frac{B|z^2|}{2})V$, constructed by means of monomials $g_j=z^j$; these two matrices have a finite rank simultaneously. Thus, by Theorem \ref{3.Thm.Luecking}, the function    $F$,    and     consequently    the function $V$,   should  be  zero.   In  the initial terms,  this    means   that    the lowest  Landau  level   $\L_0$  necessarily principally splits  into    an  infinite    cluster, as soon as the perturbation $V$ is nonzero.

A   more    advanced technic is needed for higher Landau levels.    The subspaces  $X_q$ in    which   the  Toeplitz    operators   $\Tb_q(V)$ act,    do  not fit into    the framework   of      Luecking's  theorem.    We  can,    however,    use the relation    \eqref{5.creation} between different Landau subspaces.

The following   fact    has been    established in  \cite{Bruneau}, Corollary   9.3.
\begin{proposition}\label{5.brun} Let   $V$ be  a   bounded function with    compact support.  Then  for any $q$ the Toeplitz
operator    $\Tb_q(V)$  is  unitary equivalent  to  the operator    $\Tb_0(W)$, where $W=\Dc_q(\D)V$, $\Dc_q$   being  a   polynomial   of  degree      $q$ with    positive    coefficients.\end{proposition}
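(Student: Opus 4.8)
The plan is to transplant $\Tb_q(V)$ from $X_q$ onto $X_0$ by means of the creation operator and then to read off the weight that is produced on $X_0$. Put $Q:=\overline{Q}^*$. From the standard spectral theory of the Landau Hamiltonian (see the references cited before the Proposition) one has the mapping properties $\overline{Q}X_q=X_{q+1}$, $QX_j=X_{j-1}$ for $j\ge1$, $QX_0=\{0\}$, the commutation relation $[Q,\overline{Q}]=cB$ with a positive universal constant $c$, and the fact that $Q\overline{Q}$ acts on $X_q$ as the positive scalar $\kappa_q:=(q+1)cB$. Hence $Q^q\overline{Q}^{\,q}$ acts on $X_0$ as the positive scalar $\gamma_q:=\kappa_0\kappa_1\cdots\kappa_{q-1}$, so that
\[
U_q:=\gamma_q^{-1/2}\,\overline{Q}^{\,q}\colon X_0\longrightarrow X_q
\]
is unitary, with adjoint $U_q^{*}=\gamma_q^{-1/2}\,Q^q\big|_{X_q}$.

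Next I would conjugate. Since $\Ran U_q=X_q$ we have $P_qU_q=U_q$, and since $Q$ lowers the Landau level by one, $Q^qP_q=P_0Q^q$ on $L_2(\C)$; combining these,
\[
U_q^{*}\,\Tb_q(V)\,U_q\,v=\gamma_q^{-1}\,P_0\,Q^q\,M_V\,\overline{Q}^{\,q}\,v\qquad(v\in X_0),
\]
where $M_V$ is multiplication by $V$ and $P_0$ is the Bergman projection onto $X_0$. Thus $\Tb_q(V)$ is unitarily equivalent to the operator $v\mapsto\gamma_q^{-1}P_0Q^qM_V\overline{Q}^{\,q}v$ on $X_0$, and it remains to put this operator in Toeplitz form, which I would do by normal ordering. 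As $Q$ and $\overline{Q}$ are first-order differential operators with affine coefficients, $[M_V,Q]$ and $[M_V,\overline{Q}]$ are multiplications by constant multiples of $\partial_{\bar z}V$ and $\partial_zV$, respectively. Moving all $q$ factors $Q$ to the right through $M_V$, normal-ordering the resulting products $Q^{q-i}\overline{Q}^{\,q}$ by the commutation relation, and then moving the surviving creation operators to the left through the remaining multiplication operator, one expands $Q^qM_V\overline{Q}^{\,q}$ into a finite sum of terms $(\mathrm{const})\,\overline{Q}^{\,a}M_WQ^{b}$ with $W$ a constant-coefficient derivative of $V$. Placed between $P_0$ and $X_0$, every term with $b\ge1$ kills $X_0$, and every term with $a\ge1$ is annihilated by $P_0$ (because $\overline{Q}^{\,a}X_0=X_a\perp X_0$); moreover, in each surviving term the numbers of $\partial_z$ and of $\partial_{\bar z}$ applied to $V$ coincide (the $Q$'s must be balanced against the $\overline{Q}$'s, some of them spent as derivatives and the rest as central commutators), so $W$ is a linear combination of $\partial_z^i\partial_{\bar z}^iV=(\tfrac14\Delta)^iV$, $0\le i\le q$. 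Therefore
\[
U_q^{*}\,\Tb_q(V)\,U_q=\Tb_0(\Dc_q(\Delta)V),
\]
with $\Dc_q$ a polynomial of degree $q$.

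Finally I would check that the coefficients of $\Dc_q$ are positive. The constant term is $\gamma_q^{-1}\gamma_q=1$, and the coefficient of $(\tfrac14\Delta)^iV$ is a product of two binomial coefficients (from the two passages through $M_V$), a manifestly positive normal-ordering coefficient (all coefficients in the expansion of $Q^{q-i}\overline{Q}^{\,q}$ are positive, since $[Q,\overline{Q}]=cB>0$), and the $i$-th powers of the two constants occurring in $[M_V,Q]$ and $[M_V,\overline{Q}]$; the product of those two constants is positive precisely because $Q=\overline{Q}^*$, the adjoint relation supplying exactly the sign that cancels the sign in $[M_V,\overline{Q}]=-[\overline{Q},M_V]$. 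Tracking these signs one gets $\Dc_q(\Delta)=\sum_{i=0}^{q}\lambda_i(\tfrac14\Delta)^i$ with all $\lambda_i>0$. Alternatively one may identify $\Dc_q$ directly from the known Laguerre-polynomial form of the reproducing kernel of $X_q$.

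The unitary equivalence of the first two paragraphs is routine; the substantive step, and the main obstacle, is the normal-ordering computation together with the \emph{sign} assertion — one must organize the non-commutative expansion of $Q^qM_V\overline{Q}^{\,q}$ so that the coefficients of the resulting constant-coefficient differential operator come out manifestly nonnegative, and it is here that the concrete structure of the Landau ladder operators, equivalently the Laguerre structure of the Landau projections, rather than a soft argument, has to be exploited.
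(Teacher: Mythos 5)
The paper does not prove this proposition: it is quoted from Bruneau--Pushnitski--Raikov \cite{Bruneau}, Corollary 9.3, with only the remark that the smooth-$V$ proof there extends to bounded $V$ with $\Dc_q(\Delta)V$ read distributionally. There is thus nothing in the text to compare with, and your sketch has to stand on its own. Its overall scheme is correct and, I believe, the one underlying the cited reference: $U_q=\gamma_q^{-1/2}\overline{Q}^q$ is indeed an isometry $X_0\to X_q$; the intertwining $Q^qP_q=P_0Q^q$ holds; and after normal-ordering $Q^qM_V\overline{Q}^q$ the only term surviving between $P_0$ and $X_0$ is a pure multiplication, in which the numbers of $\partial_z$ and of $\partial_{\bar z}$ falling on $V$ must be equal, so the weight is a degree-$q$ polynomial in $\Delta$ applied to $V$. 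Two places still need care. First, the positivity of the coefficients is subtler than your account suggests: writing $\overline{Q}=\beta\partial_z+\cdots$, the relation $Q=\overline{Q}^*$ forces $Q=-\bar\beta\,\bar\partial_z+\cdots$, so the literal product of the constants in $[M_V,Q]$ and $[M_V,\overline{Q}]$ is $\alpha\beta=-|\beta|^2<0$; the compensating sign comes from the fact that the $Q$'s are commuted to the \emph{right} through $M_V$ while the $\overline{Q}$'s are commuted to the \emph{left}, contributing $(-1)^i$, and only the combination $(-\alpha\beta)^i=|\beta|^{2i}$ is positive. Your phrasing gestures at this cancellation but is not exact, and one of your ``two binomial coefficients'' in fact degenerates to $\binom{i}{i}=1$. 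Second, your commutators $[M_V,Q]$, $[M_V,\overline{Q}]$ differentiate $V$, so the argument as written proves the statement for smooth $V$ only; the extension to bounded $V$ with compact support, which the paper itself flags as needed, requires a mollification or quadratic-form limit argument that you do not supply.
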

To  be  exact,  the statement   was proved  in  \cite{Bruneau}  for smooth  functions   $V$, but the proof extends  automatically   to  the case    of  bounded functions   $V$, and, of course, the expression $\Dc_q(\D)V$    should  be  understood  in  the sense   of  distribution.

Now,    suppose that    for some    $q$ the operator    $\Tb_q(V)$  has finite  rank.   By  Proposition \ref{5.brun},   the Toeplitz    operator    $\Tb_0(W)$  has finite  rank    as  well,   and we  can apply   Theorem \ref{3.Th.Alexroz}. So,    the distribution $W=\Dc_q(\D)V$  must        be  a   combination of  a   finite  number  of  the $\d$-distributions and their derivatives.  Therefore,   the Fourier transform   of  $W$ is  a   polynomial, the Fourier transform of $V$ must be a rational function, and such $V$ cannot be a function with compact support, by the analytic hypoellipticity.

We arrived at the following result.
\begin{theorem}\label{5:LandauLevels}Suppose    that    under   the perturbation    of  the Landau  Hamiltonian by a bounded function $V$ with compact support there    is  no  principal   splitting   for one of  Landau  levels.  Then    the perturbation    is zero, and therefore there is no principal splitting on other Landau levels either.\end{theorem}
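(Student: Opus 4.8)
By the convention fixed in the discussion preceding the theorem, the assumption that there is no principal splitting at the Landau level $\L_q$ means precisely that the Landau--Toeplitz operator $\Tb_q(V)$ in $X_q$ has finite rank. The plan is to push this finite rank property down to the lowest Landau subspace $X_0$, where the finite rank theorems of the previous sections apply, and then to run a regularity argument backwards so as to force $V$ to vanish. Once $V=0$ is proved, the last clause of the statement is trivial, since then every $\Tb_{q'}(V)$ is the zero operator and there is no splitting at all.

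First I would invoke Proposition \ref{5.brun}: $\Tb_q(V)$ is unitarily equivalent to $\Tb_0(W)$, where $W=\Dc_q(\D)V$ and $\Dc_q$ is a polynomial of degree $q$ with positive coefficients. Unitary equivalence preserves the rank, so $\Tb_0(W)$ has finite rank; here $W$ is a well defined compactly supported distribution, $\supp W\subset\supp V$, the expression $\Dc_q(\D)V$ being read distributionally. Next, exactly as in the treatment of the case $q=0$ recalled above, multiplication by the Gaussian identifies $X_0$ with the Fock space and turns $\Tb_0(W)$ into the analytic Bergman (Fock) Toeplitz operator $\Tc_{\tilde F}$ with distributional weight $\tilde F=e^{-B|z|^2/2}W\in\Ec'(\C^1)$: the infinite matrix built on $X_0$ from the functions $e^{-B|z|^2/4}z^j$ coincides with the matrix $\Ac(\tilde F)$ built from the monomials $z^j$, hence the two operators have finite rank simultaneously. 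Applying Theorem \ref{3.Th.Alexroz} to $\tilde F$ then yields that $\tilde F$, and therefore $W$ itself (multiplication by the nowhere vanishing smooth factor $e^{B|z|^2/2}$ does not change the support structure), is a finite linear combination of $\d$-distributions and their derivatives supported at finitely many points $z_1,\dots,z_n\in\C^1$.

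It remains to deduce $V=0$. Since the leading coefficient of $\Dc_q$ is positive, the constant coefficient operator $\Dc_q(\D)$ has principal symbol a nonzero multiple of $|\x|^{2q}$, hence is elliptic, and therefore analytic-hypoelliptic. On the open connected set $U=\C^1\setminus\{z_1,\dots,z_n\}$ we have $\Dc_q(\D)V=W=0$, so $V$ coincides on $U$ with a real-analytic function; since $V$ is compactly supported it vanishes on the nonempty open set $U\setminus\supp V$, and by the identity principle on the connected set $U$ this forces $V\equiv0$ on $U$, whence $V=0$ as an element of $L_\infty(\C^1)$. Equivalently, one may argue as in the sketch above through Fourier transforms: $\widehat W=\Dc_q(-|\x|^2)\,\widehat V$ is an exponential polynomial, so $\widehat V$ is the quotient of an exponential polynomial by a nonconstant polynomial, which, by analytic hypoellipticity, cannot be the Fourier transform of a nonzero compactly supported function.

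The analytic heart of the matter, Theorem \ref{3.Th.Alexroz}, is already available, so the work here is concentrated in the two reductions. One must check that Proposition \ref{5.brun}, proved in \cite{Bruneau} for smooth $V$, persists for bounded $V$ with the right-hand side understood in the sense of distributions, and that the matrix identification of $\Tb_0(W)$ with a Fock-space---hence analytic Bergman---Toeplitz operator is legitimate for the distributional weight $\tilde F$ (this is exactly the passage that the remarks in Section \ref{SectionLuecking} caution about). The step from ``$W$ supported at finitely many points'' to ``$V=0$'' is the one genuinely analytic ingredient, but it is a standard consequence of the ellipticity, hence analytic hypoellipticity, of $\Dc_q(\D)$, and I expect it to present no real obstacle.
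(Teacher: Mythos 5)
Your proof follows the paper's argument exactly: transfer the finite rank of $\Tb_q(V)$ to $\Tb_0(W)$ via Proposition \ref{5.brun}, identify $X_0$ with the Fock space so that Theorem \ref{3.Th.Alexroz} makes $W=\Dc_q(\Delta)V$ a finite sum of point masses and their derivatives, and then force $V=0$ from the analytic hypoellipticity of the elliptic constant-coefficient operator $\Dc_q(\Delta)$. If anything, your last step (real-analyticity of $V$ on the punctured plane plus the identity principle) is written more carefully than the paper's loose remark that $\widehat{W}$ is a polynomial --- it is an exponential polynomial when the masses sit at distinct points --- so the proposal is correct and complete.
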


\section{Other Bergman Spaces}

The analytical Bergman spaces, considered above, have a vast advantage, the multiplicative structure that is used all the time. For other types of Bergman spaces, without the multiplicative structure, the results are therefore less extensive. An exception is constituted by  the harmonic Bergman spaces in an even-dimensional space, due to their close relation to analytical functions.
\subsection{Harmonic Bergman spaces}
The aim of this section is to establish finite rank results for  Toeplitz operators in
 Bergman  spaces of harmonic functions. The results presented here  generalize the ones in \cite{AlexRoz}.

We start with the even-dimensional case, $d=2\mb$. Here the problem with harmonic spaces reduces easily to the
analytical Bergman spaces.

For a distribution $F\in \Ec'(\R^d)$ we consider a matrix
$H(F)$ consisting of elements $\langle F, f_j \overline{f_k}\rangle$, where $f_j$ is  some complete system of homogeneous
harmonic polynomials in $\R^{d}=\C^{\mb}$. It is convenient (but not obligatory) to suppose that  real and imaginary parts of analytic monomials, $\re(\zb^\a), \im(\zb^\a) ,$  $\a\in\Z_+^d,$  are among the polynomials $f_j$. For some subset $\Jc\subset \Z_+^d$, we denote by $H^{\Jc}(F)$ the matrix with entries $\langle F, f_j \overline{f_k}\rangle$, with $\re(\zb^\a), \im(\zb^\a), \ \a\in\Jc$ removed.
\begin{theorem}\label{ThHarmEven} Let $d=2\mb$ be an even integer.
Suppose that for some $N$, the set $\Jc$ satisfies the conditions of Theorem \ref{4.LeMulti}, and
for a distribution $F\in \Ec'(\R^n)$ the matrix $H^{\Jc}(F)$ has rank
$\rb\le N$. Then the distribution $F$ is a sum of $m\le \rb+1$ terms, each supported at one point:
$F=\sum L_q\d(\xb-\xb_q)$, $\xb_q\in \R^d$, $L_q$ are differential operators in $\R^d$.\end{theorem}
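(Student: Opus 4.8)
The plan is to reduce the statement to the already established finite-rank theorem for analytic Bergman spaces in $\C^\mb$, namely Theorem~\ref{4.LeMulti}. The key remark is that analytic and anti-analytic monomials are themselves harmonic in $\R^{d}=\C^\mb$: the Laplacian factors as $\D=4\sum_{j=1}^{\mb}\pd_{z_j}\pd_{\bar z_j}$, and since $\zb^\a$ is holomorphic in each variable it is annihilated by every $\pd_{\bar z_j}$, so $\D\zb^\a=0$; likewise $\D\overline{\zb^\a}=0$. Hence $\zb^\a$ and $\overline{\zb^\a}$ are homogeneous harmonic polynomials of degree $|\a|$, and for each $\a$ the pairs $\re(\zb^\a),\im(\zb^\a)$ and $\zb^\a,\overline{\zb^\a}$ span one and the same (finite-dimensional, over $\C$) subspace of homogeneous harmonic polynomials. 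Informally, the harmonic matrix $H^\Jc(F)$ already ``contains'' the analytic reduced matrix $\Ac^\Jc(F)$ of Theorem~\ref{4.LeMulti}.

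Carrying this out, I would first fix the complete system of homogeneous harmonic polynomials so that all the functions $\re(\zb^\a)$, $\im(\zb^\a)$ occur among the $f_j$, as the remark preceding the theorem permits. Passing to the submatrix of $H^\Jc(F)$ indexed (in rows and in columns) only by those $f_j$ of the form $\re(\zb^\a)$ or $\im(\zb^\a)$ with $\a\notin\Jc$ cannot raise the rank. Next, applying to rows and to columns the invertible $\C$-linear substitutions coming from $\zb^\a=\re(\zb^\a)+i\,\im(\zb^\a)$, $\overline{\zb^\a}=\re(\zb^\a)-i\,\im(\zb^\a)$ preserves the rank and transforms this submatrix into the matrix with entries $\langle F,g_j\overline{g_k}\rangle$, $g_j,g_k\in\{\zb^\a,\overline{\zb^\a}:\a\notin\Jc\}$. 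Restricting once more, to the rows $g_j=\zb^\a$ and the columns $g_k=\zb^\b$ (with $\a,\b\notin\Jc$, multiindices of $\C^\mb$), one reads off precisely the reduced matrix $\Ac^\Jc(F)$ for the analytic Bergman space of $\C^\mb$, whose entries are $\langle F,\zb^\a\overline{\zb^\b}\rangle$. Therefore $\rank\Ac^\Jc(F)\le\rank H^\Jc(F)=\rb$.

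It remains to invoke Theorem~\ref{4.LeMulti} in complex dimension $\mb$. By hypothesis $\Jc$ satisfies its conditions for the given $N$, and since $\rb\le N$ the required $(2\rb+2+\e)$-sparseness in directions $\g_l$ with $\cup\nb(\g_l)=\{1,\dots,\mb\}$ follows as well; thus $F$ has finite support, consisting of at most $\rb+1$ points $\xb_1,\dots,\xb_m$, $m\le\rb+1$. Splitting $F=\sum_q F_q$ with $\supp F_q=\{\xb_q\}$ and appealing to the classical structure of a distribution supported at a single point, each $F_q$ equals $L_q\d(\xb-\xb_q)$ for a constant-coefficient differential operator $L_q$, which is the assertion. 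The only genuinely delicate point is the bookkeeping of the middle step: one must arrange the column operations, which act on $F$ through the conjugated arguments $\overline{f_k}$, so that the target entries $\langle F,\zb^\a\overline{\zb^\b}\rangle$ really arise, and check that the sparse set $\Jc$ and the admissible directions $\g_l$ transfer unchanged to the multiindices of $\C^\mb$. The odd-dimensional harmonic case admits no such embedding into an analytic Bergman space and will require a separate argument.
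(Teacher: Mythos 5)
Your argument is correct and is essentially the paper's proof: identify $\R^{2\mb}$ with $\C^\mb$, observe that $\zb^\a$ and $\overline{\zb^\b}$ are harmonic so that the analytic reduced matrix $\Ac^{\Jc}(F)$ has rank at most $\rb$, and then invoke Theorem~\ref{4.LeMulti}. You spell out the $\C$-linear change of basis from $\{\re(\zb^\a),\im(\zb^\a)\}$ to $\{\zb^\a,\overline{\zb^\a}\}$ more explicitly than the paper, which simply treats $\Ac^{\Jc}(F)$ as a submatrix of $H^{\Jc}(F)$; that extra care is warranted, since it only becomes a submatrix after that invertible row/column transformation.
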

\begin{proof} We identify the space $\R^d$ with the complex space $\C^\mb$. Since the functions
$\zb^\a,\bar{\zb}^\b$ are harmonic, the matrix $\Ac^{\Jc}(F)$ (defined  in  Section 4) can be considered as a submatrix of $H^{\Jc}(F)$, and
therefore it has rank not greater than $\rb$. It remains to apply
Theorem \ref{4.LeMulti} to
establish that the distribution $F$ has the required form, with no more  than $\rb+1$ points $\xb_q$.
\end{proof}
The same reasoning establishes similar properties for the Bergman spaces of pluriharmonic and $\mb$-harmonic functions.

The odd-dimensional case requires considerably more work, and the results are less complete. We will use again  a kind of dimension
reduction, as in Theorem \ref{3.Th.Alexroz}, however, unlike the analytic case,  we will need projections of the distribution to
one-dimensional subspaces. We have to restrict our considerations to distributions being regular complex Borel measures (so we will use the notation $\m$ instead of $F$)  and from now on we will not consider the generalizations related with the removal of sparse subsets $\Jc$.

 Let $S$ denote the unit sphere in $\R^d$, $S=\{\z\in\R^d:|\z|=1\}$ and let $\s$ be the Lebesgue
measure on $S$. For $\z\in S$, we denote by $\Lc_\z$ the one-dimensional subspace in $\R^d$ passing
through $\z$, $\Lc_\z=\z \R^1$. For a measure with compact support $\m$ on $(\R^d)$ we define the measure
$\m_\z$ on $(\R^1)$ by setting $\langle \m_\z,\f\rangle=\langle \m,\f_z\rangle$, where $\f_z\in
C^\infty(\R^d)$ is $\f_z(\xb)=\f(\xb\cdot z)$. The measure $\m_\z$ can be understood as result of
projecting of $\m$ to $\Lc_\z$ with further transplantation of the projection, $\pi_*^{\Lc_\z}\m$, from
the line $\Lc_\z$ to the standard line $\R^1$.  The  Fourier transform $\Fc \m_\z$ of $\m_\z$ is
closely related with $\Fc \m$:
\begin{equation}\label{ProjDistr}
    \Fc (\m_\z)(t)=(\Fc \m)(t\z).
\end{equation}

The following   fact    in  the harmonic    analysis    of  measures    was established in  \cite{AlexRoz}.
\begin{proposition}\label{6:ARProp}For a finite complex Borel measure $\m$ with compact support in $\R^d$
the following three statements are equivalent:\\
a) $\m$ is discrete;\\
 b) $\m_\z$ is discrete for all $\z\in S$;\\
 c) $\m_\z$ is discrete for $\s$-almost all $\z\in S$.
\end{proposition}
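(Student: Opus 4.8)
The plan is to prove the cycle a)$\Rightarrow$b)$\Rightarrow$c)$\Rightarrow$a), where only the last implication carries real content. Throughout, write $\pi_\z\colon x\mapsto x\cdot\z$, so that $\m_\z=(\pi_\z)_*\m$. For a)$\Rightarrow$b): if $\m=\sum_j c_j\d_{x_j}$ with $\sum_j|c_j|<\infty$, then for every $\z\in S$ one has $\m_\z=\sum_j c_j\d_{x_j\cdot\z}$ (merging terms with coinciding $x_j\cdot\z$), again a countable combination of point masses, hence discrete. The implication b)$\Rightarrow$c) is trivial.

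For c)$\Rightarrow$a) I would first peel off the atomic part. Decompose $\m=\m_{\mathrm{a}}+\m_{\mathrm{c}}$ into its atomic and continuous components; by a)$\Rightarrow$b) applied to $\m_{\mathrm{a}}$ the measure $(\m_{\mathrm{a}})_\z$ is discrete for \emph{every} $\z$, so $(\m_{\mathrm{c}})_\z=\m_\z-(\m_{\mathrm{a}})_\z$ is discrete for $\s$-a.e. $\z$; call $G\subset S$ the full-measure set of such $\z$. It then suffices to prove that a non-atomic finite compactly supported $\m_{\mathrm{c}}$ with this property vanishes.

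The heart of the matter is the elementary fact that a non-atomic finite Borel measure $\r$ on $\R^d$ charges at most countably many hyperplanes. Indeed, for a fixed normal direction the parallel hyperplanes partition $\R^d$, so only countably many carry positive $\r$-mass; and if uncountably many directions contributed, one could pick arbitrarily many charged hyperplanes $H_1,\dots,H_m$ whose pairwise intersections — affine subspaces of codimension $\ge2$, of which a non-atomic measure again charges only countably many (induction on codimension) — are $\r$-null, so $\|\r\|\ge\r(H_1\cup\dots\cup H_m)=\sum_i\r(H_i)$ could be made arbitrarily large, a contradiction. Apply this to $\r=|\m_{\mathrm{c}}|$: the set $\Nc\subset S$ of unit vectors normal to some hyperplane charged by $|\m_{\mathrm{c}}|$ is countable, hence $\s$-null. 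For $\z\in G\setminus\Nc$ the measure $(\pi_\z)_*|\m_{\mathrm{c}}|$ has no atoms — an atom at $s$ would mean $|\m_{\mathrm{c}}|$ charges $\{x\cdot\z=s\}$, i.e. $\z\in\Nc$ — and since $|(\m_{\mathrm{c}})_\z|\le(\pi_\z)_*|\m_{\mathrm{c}}|$, also $(\m_{\mathrm{c}})_\z$ has no atoms. But $(\m_{\mathrm{c}})_\z$ is discrete, and a discrete measure with no atoms is zero. Hence $(\m_{\mathrm{c}})_\z=0$ for $\s$-a.e. $\z$.

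Finally, the Fourier relation \eqref{ProjDistr} gives $(\Fc\m_{\mathrm{c}})(t\z)=\Fc\bigl((\m_{\mathrm{c}})_\z\bigr)(t)=0$ for a.e. $\z\in S$ and all $t\in\R$. These points $t\z$ form a subset of $\R^d$ of full Lebesgue measure, hence dense, and $\Fc\m_{\mathrm{c}}$ is continuous, so $\Fc\m_{\mathrm{c}}\equiv0$, whence $\m_{\mathrm{c}}=0$ and $\m=\m_{\mathrm{a}}$ is discrete. The main obstacle is precisely the hyperplane-counting lemma together with its codimension induction; the rest is bookkeeping. If one reads "discrete" in the stronger sense "finitely supported" — the case relevant for compactly supported $\m$ — one line is added at the end: writing $\m=\sum_k c_k\d_{x_k}$, for $\z$ avoiding a $\s$-null set (a countable union of equatorial subspheres, coming from the conditions $(x_k-x_{k'})\cdot\z=0$) all $x_k\cdot\z$ are distinct, so $\m_\z$ has exactly as many atoms as $\m$, and finiteness of the former for a.e. $\z$ forces finiteness of the latter.
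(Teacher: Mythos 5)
The paper itself does not prove this proposition --- it simply refers to Corollary 5.3 of \cite{AlexRoz} --- so there is no ``paper's proof'' to compare against. Your outline (a)$\Rightarrow$b)$\Rightarrow$c) trivially; for c)$\Rightarrow$a) split $\m=\m_{\mathrm a}+\m_{\mathrm c}$, show the continuous part has atom-free projections for a.e.\ $\z$ and hence vanishes, finish via Fourier uniqueness) is a reasonable route, and the bookkeeping steps and the final Fourier argument are correct.

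The gap is in the hyperplane-counting lemma. It is \emph{false} that a non-atomic finite Borel measure $\r$ on $\R^d$ charges at most countably many hyperplanes: let $\r$ be one-dimensional Lebesgue measure on a compact line segment $L\subset\R^3$. This $\r$ is non-atomic, finite and compactly supported, yet every one of the uncountably many planes containing $L$ has full $\r$-mass. Your inductive repair does not save it, because the pairwise intersections of two such planes still contain $L$, so they are $\r$-full rather than $\r$-null, and the inequality $\|\r\|\ge\sum_i\r(H_i)$ never gets started. Thus your set $\Nc$ of normal directions of charged hyperplanes need not be countable. What you actually need --- and what is true --- is only that $\Nc$ is $\s$-null (in the example $\Nc$ is a great circle: uncountable but $\s$-null), and this requires a different argument. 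One clean route is Fubini: with $\r=|\m_{\mathrm c}|$,
\begin{equation*}
\int_S\!\int\!\!\int \mathbf{1}\bigl\{(x-y)\cdot\z=0\bigr\}\,d\r(x)\,d\r(y)\,d\s(\z)
=\int\!\!\int \s\bigl(\{\z\in S:(x-y)\cdot\z=0\}\bigr)\,d\r(x)\,d\r(y)=0,
\end{equation*}
since for $x\ne y$ the inner set is an equatorial $(d-2)$-sphere of $\s$-measure zero, while the diagonal $\{x=y\}$ is $\r\otimes\r$-null because $\r$ is non-atomic. Hence for $\s$-a.e.\ $\z$ one has $\int\!\!\int\mathbf{1}\{(x-y)\cdot\z=0\}\,d\r\,d\r=\sum_{s}\r(\{x:x\cdot\z=s\})^{2}=0$, i.e.\ no hyperplane normal to $\z$ is charged, which is exactly the $\s$-nullity of $\Nc$. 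With that substitution the remainder of your argument goes through; but as written, the cardinality claim is the step that fails.
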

 The    proof   of      Proposition \ref{6:ARProp}  can be  found   in  \cite{AlexRoz},  see   Corollary   5.3 there.

Now we return to our finite rank problem.
\begin{theorem}\label{TheoHarm}
Let $d\ge3$ be an odd integer, $d=2\mb+1$. Let $\m$ be a finite complex Borel measure in $\R^d$ with
compact support. Suppose that the matrix $H(\m)$ has finite rank $\rb$. Then $\supp \m$ consists of no
more than $\rb$ points.
\end{theorem}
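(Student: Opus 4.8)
The plan is to reduce the odd-dimensional harmonic case to the one-dimensional measure problem via radial projections, exactly in the spirit of Proposition \ref{6:ARProp}, and then upgrade the qualitative conclusion ``$\m$ discrete'' to the quantitative bound on the number of points. First I would observe that the hypothesis that $H(\m)$ has finite rank $\rb$ controls, in particular, the submatrix built from the harmonic polynomials $\re(\zb^\a),\im(\zb^\a)$ in any complex structure we might put on a $(d-1)$-dimensional coordinate hyperplane; but in odd dimension $d=2\mb+1$ there is no global complex structure, so the analytic trick used in Theorem \ref{ThHarmEven} is unavailable, and this is why the projection method is forced upon us.

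The key steps, in order: (1) For each $\z\in S$ form the projected measure $\m_\z$ on $\R^1$ as in the paragraph preceding the statement. The goal is to show that $\Tc_{\m_\z}$, viewed as a Toeplitz operator on the one-variable analytic Bergman space (equivalently, that the matrix $\Ac(\m_\z)=(\langle\m_\z, t^j\bar t^k\rangle)$), has rank bounded by something like $\rb$, \emph{uniformly in $\z$}. The mechanism is that the monomials $t^j$ on $\Lc_\z$, pulled back to $\R^d$, are restrictions of homogeneous polynomials in $\xb$, and these can be expanded in homogeneous harmonic polynomials (harmonic decomposition of homogeneous polynomials: $x^\a = \sum r^{2s} h_{\a,s}(\xb)$ with $h_{\a,s}$ harmonic). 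Hence each entry $\langle\m_\z, t^jt^k\rangle$ is a finite linear combination of entries of $H(\m)$ with coefficients depending on $\z$ — so $\Ac(\m_\z)$ factors through a matrix of rank $\le\rb$ for every $\z$, giving $\rank \Ac(\m_\z)\le\rb$. (2) Apply Theorem \ref{3.Thm.Luecking}/Theorem \ref{3.Th.Alexroz} (in its measure form) on the line: each $\m_\z$ is a finite sum of point masses, with at most $\rb$ atoms. (3) By Proposition \ref{6:ARProp}, $\m$ itself is discrete, $\m=\sum_k C_k\d_{\xb_k}$, possibly with infinitely many atoms a priori. (4) Bound the number of atoms: if $\m$ had $\rb+1$ distinct atoms $\xb_0,\dots,\xb_\rb$, choose a generic direction $\z$ so that the scalar products $\xb_0\cdot\z,\dots,\xb_\rb\cdot\z$ are pairwise distinct (a full-measure condition on $\z$); then $\m_\z$ has at least $\rb+1$ atoms on $\R^1$, contradicting step (2). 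Combined with the harmonic-polynomial reproducing/interpolation argument already used at the end of Theorem \ref{3.Th.Alexroz} (to rule out that atoms ``hide'' behind linear dependence), this pins the count at $\le\rb$.

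The main obstacle I expect is step (1): making the passage from $H(\m)$ to $\Ac(\m_\z)$ genuinely rank-preserving up to the constant $\rb$, rather than blowing up the rank by an uncontrolled ($\z$-dependent or degree-dependent) factor. The subtlety is that $\langle\m_\z,t^jt^k\rangle = \langle\m, (\xb\cdot\z)^{j+k}\rangle$, and $(\xb\cdot\z)^{j+k}$ is a single homogeneous polynomial whose harmonic decomposition involves many harmonic components; one must check that the resulting linear map on matrices really has the form $\Ac(\m_\z) = R_\z\, H(\m)\, C_\z$ (or a finite sum of such) with $R_\z, C_\z$ independent of the truncation level, so that $\rank \Ac(\m_\z)\le \rank H(\m)$. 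A clean way is to use that $t^j$ on $\Lc_\z$, as a function on $\R^d$ restricted to the line, lies in the closed span of the $f_j$'s in an appropriate topology, and that the Toeplitz matrix is the Gram matrix of the sesquilinear form $\langle\m,\cdot\,\overline{\cdot}\,\rangle$ — so finite rank of that form on the big system forces finite rank, with the same bound, on any subsystem or linear image. If that abstract linear-algebra reduction goes through, the rest is routine given the cited results; the genericity argument in step (4) is standard.
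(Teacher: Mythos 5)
Your overall strategy — use the radial projections $\m_\z$, invoke Proposition \ref{6:ARProp} to pass from ``all $\m_\z$ discrete'' to ``$\m$ discrete'', and then pick a generic direction to count atoms — is the right one, and your step (4) is essentially the paper's closing argument. But the obstacle you flag in step (1) is a genuine wall, and the ``clean way'' you propose to get around it does not work. The issue is that $\langle\m_\z,t^j\bar t^k\rangle=\langle\m,(\xb\cdot\z)^{j+k}\rangle$, and the polynomial $(\xb\cdot\z)^m$ for $m\ge2$ is \emph{not} a linear combination of harmonic polynomials in $\R^d$: by Fischer decomposition it is $\sum_{s\ge0}|\xb|^{2s}h_{m,s}(\xb)$ with at least one nonzero $s\ge1$ term, and $|\xb|^{2s}h_{m,s}$ with $s\ge1$ lies outside the span of harmonics. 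So the pullback of $t^j$ to $\R^d$ is not in the span of the $f_\ell$'s, and the Gram-matrix/subsystem argument you invoke (``$\Ac(\m_\z)=R_\z H(\m) C_\z$, hence $\rank\Ac(\m_\z)\le\rank H(\m)$'') has a false premise. Writing $(\xb\cdot\z)^{j+k}$ as a sum of \emph{products} $f_p\bar f_q$ does not help either: that only expresses each entry of $\Ac(\m_\z)$ as a linear combination of entries of $H(\m)$, which controls nothing about rank (there is no two-sided conjugation structure in $(j,k)$).

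The paper's proof evades this difficulty by inserting an intermediate projection. For a fixed $\z$, one chooses a $(d-1)=2\mb$-dimensional hyperplane $\Lc\supset\Lc_\z$, equips it with a complex structure $\Lc\simeq\C^\mb$, and observes that the pullbacks of $\zb^\a$ and $\bar\zb^\b$ to $\R^d$ (constant in the $x_d$ direction, holomorphic/antiholomorphic in $\zb$) are genuine harmonic polynomials on $\R^d$. Hence the Bergman matrix $\Ac(\pi^{\C^\mb}_*\m)$ is literally a \emph{submatrix} of $H(\m)$, so its rank is $\le\rb$ with no computation at all, and the even-dimensional Theorem \ref{ThHarmEven} applies to give that $\pi^{\C^\mb}_*\m$ is discrete with at most $\rb$ atoms. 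Projecting this discrete measure further to the line $\Lc_\z$ then yields that $\m_\z$ is discrete with at most $\rb$ atoms (here one uses that the two-step projection $\R^d\to\Lc\to\Lc_\z$ agrees with the direct one), with no need to control $\rank\Ac(\m_\z)$ by hand. After that, Proposition \ref{6:ARProp} and the generic-direction count finish the argument exactly as you describe. In short: your plan correctly identifies all the ingredients except the mechanism for step (1), and the correct mechanism is not a moment-matrix rank estimate on the line but a detour through an even-dimensional hyperplane where the submatrix observation is available.
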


\begin{proof}Fix some $\z\in S$ and choose some $d-1=2\mb$-dimensional linear subspace $\Lc\subset \R^{d}$ containing $\Lc_\z$.
 We choose  the co-ordinate system
$\xb=(x_1,\dots,x_d)$ in $\R^d$ so that the subspace $\Lc$ coincides with $\{\xb:x_d=0\}$. The
even-dimensional real space $\Lc$ can be considered as the $\mb$-dimensional complex space $\C^\mb$ with
co-ordinates $\zb=(z_1,\dots,z_\mb)$,  $z_j=x_{2j-1}+ix_{2j}$, $j=1,\dots,\mb$. The functions
$(\zb,x_d)\mapsto z^\a$, $(\zb,x_d)\mapsto \bar{\zb}^\b$, $\a,\b\in (\Z_+)^d$, are harmonic polynomials in
$\C^d\times  \R^1$. Moreover, by definition,
$\langle\m,\zb^\a\bar{\zb}^\b\rangle=\langle\pi^{\C^n}_*\m,\zb^\a\bar{\zb}^\b \rangle$. Hence,  the matrix
$\Ac(\pi^{\C^n}_*\m)$ is a submatrix of the matrix $H(\m)$, and the former has not greater rank
than the latter, $\rank(\Ac(\pi^{\C^n}_*\m))\le \rb$.   So we can apply Theorem \ref{ThHarmEven} and
obtain that the measure $\pi^{\C^\mb}_*\m$ is discrete and its support contains not more than $\rb$
points. Now we project the measure $\pi^{\C^n}_*\m$ to the real one-dimensional  linear subspace
$\Lc_\z$ in $\Lc$.  We obtain the same measure as if we had projected $\m$ to $\Lc_\z$ from the very
beginning, and not in two steps i.e.,  $\pi^{\Lc_\z}_*\m$. As a projection of a discrete measure,
$\pi^{\Lc_\z}_*\m$ is discrete and has no more than $\rb$ points in the support. By our definition of
the measure $\m_\z$ as $\pi^{\Lc_\z}_*\m$ transplanted to $\R^1$, this means  that $\m_\z$ is
discrete.

Due to the arbitrariness of the choice of $\z\in S$, we obtain that all measures $\m_\z$ are
discrete. Now we can apply Proposition \ref{6:ARProp} which implies that the measure $\m$ is discrete
itself. Finally, in order to show that the number of points in $\supp \m$ does not exceed $\rb$, we
chose $\z\in S$ such that no two points in $\supp \mu$ project to the same point in $\Lc_\z$. Then
the point masses of $\m$ cannot cancel each other under the projection, and thus $\card\supp
\m=\card\supp\m_\z\le \rb$.

The number of points in the support of $\m$ is estimated in the same way as in Theorem \ref{3.Th.Alexroz}.
\end{proof}

 The analysis of the reasoning in the proof shows that the only essential obstacle for extending Theorem
\ref{TheoHarm} to the case of distributions is the limitation set by Proposition \ref{6:ARProp}.
If we were able to prove this proposition for distributions, all other steps in the proof of Theorem
\ref{TheoHarm} would go through without essential changes.  However, it turns out that not only the
proof of Proposition \ref{6:ARProp} cannot be carried over to the distributional case, but,
moreover, the Corollary itself becomes wrong. The example, that can be found in \cite{AlexRoz},  does not disprove Theorem
\ref{TheoHarm} for distributions, however it indicates that the proof, if exists, should involve some
other ideas.

\subsection{Helmholtz Bergman spaces}\label{Helm.Sect} We consider now the Helmholtz equation
\begin{equation}\label{6:Helm.1}
    \Delta u +\kb^2 u=0,
\end{equation}
in $\O\subset\R^d$, where $\kb>0$ (we set $\kb^2=1$, without loosing in generality). Let $\O$ be a bounded domain and $F$ be a distribution with compact support in $\O$. We denote by $\Hs=\Hs(\O)$ the space of solutions of \eqref{6:Helm.1} in $\O$ belonging  to $L_2(\O)$ (we consider the Lebesgue measure here). We will call such solutions \emph{Helmholtz functions}. For a distribution $F\in \Ec'(\O)$ we, as usual, define the Toeplitz operator $\Tc_F:\Hs\to\Hs$, by means of the quadratic form
$(\Tc_f u, v)=\langle F, u\bar{v}\rangle$, $u,v\in \Hs$.
For any two systems of linearly independent functions $\Ss_1=\{f_j\},\Ss_2=\{g_k\}\subset \Hs$, we consider the matrix $\Ac=\Ac(F;\Ss_1,\Ss_2):$
\begin{equation}\label{6:matrix}
    \As(F;\Ss_1,\Ss_2)=(\langle F, f_j(x)\overline{g_k(x)}\rangle), \ f_j\in \Sigma_1, g_k\in\Sigma_2.
\end{equation}
If the Toeplitz operator $\Tc_F$ has finite rank $\rb$, the matrix \eqref{6:matrix} has rank not greater than $\rb$ for any $\Sigma_1,\Ss_2$. Moreover,   $$\rank \Tc_F= \max_{\Sigma_1,\Ss_2}\rank(\Ac(F;\Sigma_1,\Ss_2)).$$

\begin{theorem}\label{6:TheoremHelmholtz} Let $d\ge3$ and let $F$ be a function with compact support. Suppose that the Toeplitz operator $\Tc_F$   in  the space   of  Helmholtz   functions has finite rank. Then $F=0$.
\end{theorem}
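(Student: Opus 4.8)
The plan is to reduce the statement to the finite rank theorems already established for harmonic Bergman spaces, applied in dimension $d-1$: Theorem~\ref{ThHarmEven} when $d-1$ is even and Theorem~\ref{TheoHarm} when $d-1$ is odd. The hypothesis $d\ge 3$ is used precisely to guarantee $d-1\ge 2$ (in the odd case, $d-1\ge 3$). The bridge between the two settings is a family of Helmholtz functions of a special product shape. Fix a unit vector $\omega\in S$, use the orthogonal splitting $\R^d=\omega^\perp\oplus\R\omega$, and write $\pi=\pi_\omega$ for the orthogonal projection onto the hyperplane $\omega^\perp\cong\R^{d-1}$. For any harmonic polynomial $w$ on $\omega^\perp$ the function
\[
 u(x)=w(\pi x)\,e^{i\,\omega\cdot x}
\]
is a Helmholtz function on $\R^d$, since $\Delta u=(\Delta_{\omega^\perp}w)\,e^{i\omega\cdot x}-|\omega|^2 w(\pi x)\,e^{i\omega\cdot x}=-u$ because $\Delta_{\omega^\perp}w=0$ and $|\omega|=1$. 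Each such $u$ is smooth, hence lies in $\Hs(\O)$, and the family produced from a complete system $\{w_j\}$ of homogeneous harmonic polynomials on $\omega^\perp$ is linearly independent in $\Hs(\O)$: a vanishing combination $\sum c_j w_j(\pi x)e^{i\omega\cdot x}=0$ on $\O$ forces $\sum c_j w_j=0$ on the open set $\pi(\O)\subset\omega^\perp$, hence $c_j=0$.

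The key computation is then immediate. Since $\omega$ is real, the exponential factors cancel in $u_j\overline{u_k}=(w_j\overline{w_k})(\pi x)$, so
\[
 (\Tc_F u_j,u_k)=\langle F,(w_j\overline{w_k})(\pi x)\rangle=\langle \pi_*F,\,w_j\overline{w_k}\rangle ,
\]
where $\pi_*F$ denotes the push-forward of $F$ to $\omega^\perp$, a compactly supported integrable function on $\R^{d-1}$. In other words, the matrix of $\Tc_F$ on this system of Helmholtz functions is exactly the matrix $H(\pi_*F)$ attached to the harmonic Bergman space in $\R^{d-1}$. By Proposition~\ref{2.Prop.Matrix}, in the form recalled in Subsection~\ref{Helm.Sect}, the finiteness $\rank\Tc_F=\rb$ forces $\rank H(\pi_*F)\le\rb$.

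Now I would invoke the harmonic finite rank theorem in dimension $d-1$. If $d-1$ is even, Theorem~\ref{ThHarmEven} applies with $\Jc=\varnothing$; if $d-1$ is odd, then $d-1\ge 3$ and Theorem~\ref{TheoHarm} applies to the absolutely continuous, compactly supported complex Borel measure $\pi_*F$. In either case $\pi_*F$ is supported on a finite set of points; being a genuine integrable function, it must vanish identically. Finally, letting $\omega$ run over $S$, all hyperplane push-forwards of $F$ vanish, so by the Fourier slice identity $\Fc(\pi_*F)(\eta)=\Fc F(\eta)$ for $\eta\in\omega^\perp$ the transform $\Fc F$ vanishes on every hyperplane through the origin, hence on all of $\R^d$, and $F=0$.

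The points demanding care are the identification of the matrix above with $H(\pi_*F)$ — in particular that the exponential factors cancel, so that the auxiliary direction $\omega$ is forced into the construction only through the requirement $u\in\Hs$, while leaving no trace in the matrix — and the parity bookkeeping in the penultimate step: one must check that for every $d\ge 3$ one of the two harmonic theorems is applicable (the even case for $d$ odd, the genuinely odd case $d-1\ge 3$ for $d$ even), and that in the latter case the hypotheses of Theorem~\ref{TheoHarm} are met by $\pi_*F$. The remaining verifications (the product formula for $u_j\overline{u_k}$, the linear independence of the chosen Helmholtz functions, the fact that a function supported on finitely many points is zero, and the Fourier slice step) are routine.
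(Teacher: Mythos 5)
Your proposal is correct and follows the same high-level strategy as the paper: choose Helmholtz functions of the split form (phase in direction $\omega$) $\times$ (harmonic in $\omega^\perp$), read the Toeplitz matrix on this system as the harmonic-Bergman matrix of a $(d-1)$-dimensional object, invoke Theorem~\ref{ThHarmEven} or Theorem~\ref{TheoHarm} according to the parity of $d-1$, and then vary $\omega$. The genuine divergence is in the choice of phases, and it changes the endgame. You take \emph{matched} phases, $u_j=w_j(\pi x)e^{i\omega\cdot x}$, so the exponentials cancel in $u_j\overline{u_k}$ and the reduced object is the plain push-forward $\pi_*F$, i.e.\ the X-ray transform of $F$ along $\omega$; concluding $\pi_*F=0$ for every $\omega$ gives $\Fc F\equiv 0$ by the Fourier slice theorem, since the hyperplanes $\omega^\perp$ cover $\R^d$. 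The paper instead takes $f_j=e^{-i\omega\cdot x}h_j$, $g_k=e^{i\omega\cdot x}h_k$, so the product carries a residual factor $e^{-2i\omega\cdot x}$, the reduced object is the partial Fourier transform of $F$ at frequency $2$ in direction $\omega$, and varying $\omega$ only shows $\Fc F$ vanishes on $\{|\xi|\ge 2\}$; the paper then needs the compact support of $F$ once more (entirety of $\Fc F$) to conclude. Your matched-phase variant thus streamlines the final step, dispensing with the Paley--Wiener argument, while compact support of $F$ is still used where it matters: to make $\pi_*F$ a compactly supported function eligible for the harmonic finite rank theorems. The points you flag — that the phases must cancel, and the parity bookkeeping ensuring $d-1\ge 2$ with $d-1\ge 3$ in the odd case — are exactly the ones that need care, and you handle both correctly.
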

\begin{proof} Consider the systems $\Ss_1, \Ss_2$ consisting of functions having the form $f_j(x)=e^{-ix_1}h_j(x'), \ g_k(x)=e^{ix_1}h_k(x')$, where $h_j(x')$ is an arbitrary system of harmonic functions of the variable $x'$ in the subspace $\Lc\subset \R^d:\ x_1=0$. Then the expression in \eqref{6:matrix} takes the form
\begin{equation}\label{6:ThHelm.1}
    \langle F, f_j(\xb)\overline{g_k(\xb)}\rangle=\int \int F(x_1,x')e^{-2ix_1}dx_1h_j(x')\overline{h_k(x')}dx'.
\end{equation}
This matrix has finite rank, not greater than $\rb$. Now we are in the conditions of Theorem \ref{TheoHarm} or Theorem \ref{ThHarmEven}, depending on whether $d$ is even or odd, in dimension $d-1\ge2$, applied to the function $\widetilde{F}(x')=\int F(x_1,x')e^{-2ix_1}dx_1$, i.e., the partial Fourier transform of $F$ in $x_1$ variable, calculated in the point $\x_1=2$. Since the matrix $\Ac(F;\Sigma_1,\Ss_2)$  has finite rank for arbitrary system of harmonic functions $h_j(x')$, by the above finite rank theorems about harmonic Bergman spaces, the function $\widetilde{F}(x')$ must be zero.
We make the Fourier transform of $\widetilde{F}$ in the remaining   variables    and obtain that the Fourier transform $\hat{F}(\x)$ of $F(\xb)$ equals zero for all $\x$ having the first component equal to $2$.

 Next we fix some $\o\in \R^d$, $|\o|=1$ and consider the system $\Ss_1=\Ss_2$ consisting of the functions having the form $f_j(\xb)=e^{-i\o \xb}h_j(x'),$ $g_k(x)=e^{i\o \xb}h_k(x')$ where $x'$ is the variable in the subspace $\Ls(\o)\subset\R^d$, orthogonal to $\o$, and $h_j$ are arbitrary harmonic functions. We repeat the reasoning above to obtain  that $\hat{F}(\x)=0$ for all $\x$ having the component in the direction of $\o$ equal to $2$. Now note that for any $\x\in\R^d,$ $|\x|\ge2,$ it is possible to find such $\o, \ |\o|=1$ that $\x$ has $\o$-component equal to $2$. Therefore we obtain that $\hat{F}(\x)=0$ for all $|\x|\ge2$.  So, we obtained that $\hat{F}$ has compact support. But, recall, $F$ also has compact support. Therefore $F$ must be zero.\end{proof}

 \subsection{An application: the Born approximation}\label{Born}
 In the quantum scattering theory one of the main objects to consider is the scattering matrix; details can be found in many books on the scattering theory, e.g., \cite{Yaf}, \cite{Rodb}.
 We consider the Born approximation, which (up to a constant factor) is the integral operator $\Kb$ with kernel
 \begin{equation}\label{6:Born.1}
    K(\o,\vs)=\int_{\R^d}F(\xb)e^{i\xb(\o-\vs)}d\xb,
 \end{equation}
 where $|\o|^2=|\vs|^2=E>0$, $F(\xb)$ is the potential (decaying at infinity sufficiently fast) and the operator acts on the sphere $S^{d-1}:$ $|\o|^2=E$. Further on, we suppose that $E=1$.

 The expression \eqref{6:Born.1} coincides with the quadratic form of the Toeplitz operator $\Tc_F$ in the space of solutions of the Helmholtz equation, considered on the  systems of   functions $e^{i\o \xb}, e^{i\vs \xb}$.
 We consider the case when the operator $\Kb$ has finite rank.  This implies that the matrix \eqref{6:matrix} has finite rank. So, applying Theorem \ref{6:TheoremHelmholtz}, we obtain the following result.
 \begin{theorem} Let $d\ge3$ and let $F$ be a function with compact support. Suppose that the Born approximation
  operator $\Kb$ has finite rank. Then $F=0$.\end{theorem}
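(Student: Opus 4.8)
The plan is to read the Born kernel \eqref{6:Born.1} as the sesquilinear form of a Helmholtz--Toeplitz operator and then invoke Theorem \ref{6:TheoremHelmholtz}. Since we have normalised $E=1$, so that $\kb^2=1$ in \eqref{6:Helm.1}, for every $\o\in S^{d-1}$ the plane wave $\xb\mapsto e^{i\o\xb}$ satisfies $\Delta u+u=0$; hence its restriction to any bounded domain $\O\supset\supp F$ is a Helmholtz function, $e^{i\o\xb}\in\Hs(\O)$. Taking $u(\xb)=e^{i\o\xb}$ and $v(\xb)=e^{i\vs\xb}$ in the form $(\Tc_F u,v)=\langle F,u\bar v\rangle$ of the Helmholtz--Toeplitz operator gives exactly
\begin{equation*}
    (\Tc_F u,v)=\langle F,\,e^{i(\o-\vs)\xb}\rangle=\int_{\R^d}F(\xb)e^{i(\o-\vs)\xb}\,d\xb=K(\o,\vs),
\end{equation*}
so that the integral kernel $K$ of $\Kb$ is precisely the matrix \eqref{6:matrix} built from the systems $\Ss_1=\Ss_2=\{e^{i\o\xb}:\o\in S^{d-1}\}$.

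The next step is to transfer finiteness of the rank from $\Kb$ to $\Tc_F$. If $\Kb$ has rank $\rb$, then every finite submatrix $(K(\o_p,\vs_q))_{p,q}$ has rank at most $\rb$. I would then use that finite linear combinations of the plane waves $e^{i\o\xb}$, $\o\in S^{d-1}$, are dense in $\Hs(\O)$ (the classical density of Herglotz wave functions for the Helmholtz equation, resting on Rellich's lemma and unique continuation). Given any systems $\Ss_1,\Ss_2\subset\Hs(\O)$ and any finitely many $u_i\in\Ss_1$, $v_j\in\Ss_2$, one approximates each $u_i,v_j$ uniformly on a neighbourhood of $\supp F$ by such combinations; writing the approximants out, the corresponding matrix $(\langle F,u_i\bar v_j\rangle)_{i,j}$ becomes a limit of matrices of the form $A\,(K(\o_p,\vs_q))_{p,q}\,B^{*}$, which have rank at most $\rb$. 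By lower semicontinuity of the rank, $\rank\Ac(F;\Ss_1,\Ss_2)\le\rb$ for every $\Ss_1,\Ss_2$, and then the identity $\rank\Tc_F=\max_{\Ss_1,\Ss_2}\rank\Ac(F;\Ss_1,\Ss_2)$ forces $\rank\Tc_F\le\rb<\infty$. Since $d\ge3$ and $F$ is a compactly supported function, Theorem \ref{6:TheoremHelmholtz} then yields $F=0$, which is the assertion.

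The only point that is not bookkeeping is the density of plane-wave superpositions used in the second step; this is where the real content lies. One option is simply to quote the density of Herglotz wave functions, a standard fact in the theory of the Helmholtz equation and in inverse scattering. A more self-contained route would avoid density altogether: one checks that the particular Helmholtz functions $e^{-i\o_0\xb}h(x')$, with $\o_0\in S^{d-1}$ and $h$ harmonic in the hyperplane $x'\perp\o_0$, which enter the proof of Theorem \ref{6:TheoremHelmholtz}, lie in the closed span of plane waves --- they are produced by letting the direction run over a small circle about $\o_0$ on $S^{d-1}$ and passing to the limit, the transverse Helmholtz equation degenerating to the Laplace equation --- so that the rank bound transfers to exactly the systems used there and the argument of Theorem \ref{6:TheoremHelmholtz} applies verbatim. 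Everything else in the proof is immediate from the identification of $K$ with the Helmholtz--Toeplitz form.
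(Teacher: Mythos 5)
Your proof follows the same route as the paper's: identify the Born kernel with the sesquilinear form of $\Tc_F$ on plane waves, transfer finiteness of rank from $\Kb$ to $\Tc_F$, and apply Theorem~\ref{6:TheoremHelmholtz}. You are in fact more careful than the paper at the transfer step, which the paper disposes of in one line (``This implies that the matrix \eqref{6:matrix} has finite rank'') but which genuinely requires the density of Herglotz wave functions in $\Hs(\O)$ together with the lower semicontinuity of rank under limits, exactly as you supply; your alternative ``self-contained'' route via degeneration of plane waves to the separated-variable solutions $e^{\pm i\o_0\xb}h(x')$ is plausible but would need the degeneration argument written out in detail to carry the same weight.
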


 \end{document}